\makeatother \hypersetup{colorlinks=true}
\newtheorem{theorem}{Theorem}[section]
\newtheorem*{theoremA}{Theorem A}
\newtheorem{lemma}[theorem]{Lemma}
\numberwithin{equation}{section}
\newtheorem*{acknowledgement}{Acknowledgement}
\newtheorem{corollary}[theorem]{Corollary}
\newtheorem{proposition}[theorem]{Proposition}
\newtheorem{definition}[theorem]{Definition}
\theoremstyle{condition}
\theoremstyle{remark}
\newtheorem{remark}[theorem]{\bf{Remark}}
\newtheorem{notation}[theorem]{\bf{Notation}}
\newtheorem*{claimA}{\bf{Claim}}
\numberwithin{equation}{section}
\begin{document}
\title[Reductions of $2$-dimensional crystalline representations]{A note on
reductions of $2$-dimensional crystalline \\
Galois representations}
\author{Makis Dousmanis}
\subjclass[2000]{Primary 11F80, 11F85}
\maketitle

\begin{abstract}
Let $p$ be an odd prime number, $K_{f}$ the finite unramified extension of $%
%TCIMACRO{\U{211a} }%
%BeginExpansion
\mathbb{Q}
%EndExpansion
_{p}$ of degree $f,$ and $G_{K_{f}}$ its absolute Galois group. We construct
analytic families of \'{e}tale $\left( \varphi ,\Gamma \right) $-modules
which give rise to some families of $2$-dimensional crystalline
representations of $G_{K_{f}}$ with length of filtration $\geq p.$ As an
application, we prove that the modulo $p$ reductions of the members of each
such family (with respect to appropriately chosen Galois-stable lattices)
are constant.
\end{abstract}

\section*{Introduction}

Two-dimensional crystalline representations of $G_{%
%TCIMACRO{\U{211a} }%
%BeginExpansion
\mathbb{Q}
%EndExpansion
_{p}}:=\mathrm{Gal}\left( \overline{%
%TCIMACRO{\U{211a} }%
%BeginExpansion
\mathbb{Q}
%EndExpansion
_{p}}/%
%TCIMACRO{\U{211a} }%
%BeginExpansion
\mathbb{Q}
%EndExpansion
_{p}\right) $ arise from classical newforms of level prime to $p.$ Modulo $p$
reductions of such representations with Hodge-Tate weights in the range $%
[0;\ p]$ mattered for the proof of Serre's modularity conjecture by Khare
and Wintenberger. The reductions of all irreducible $2$-dimensional
crystalline representations of $G_{%
%TCIMACRO{\U{211a} }%
%BeginExpansion
\mathbb{Q}
%EndExpansion
_{p}}$ with Hodge-Tate weights in this range were computed by Berger-Li-Zhu 
\cite{BLZ04}, extending previous results of Deligne, Edixhoven, Fontaine and
Serre.

Serre's modularity conjecture has been recently generalized by Buzzard,
Diamond and Jarvis \cite{BDJ}, for irreducible totally odd $2$-dimensional $%
\overline{\mathbb{F}_{p}}$-representations of the absolute Galois group of
any totally real field unramified at $p.$ Two-dimensional crystalline
representations of $G_{K_{f}}:=\mathrm{Gal}\left( \overline{%
%TCIMACRO{\U{211a} }%
%BeginExpansion
\mathbb{Q}
%EndExpansion
_{p}}/K_{f}\right) $ arise naturally in the context of the BDJ conjecture,
and their modulo $p$ reductions are important for the formulation of the
weight part of this conjecture (see \cite[\S 3]{BDJ}). The weight part of
Serre's conjecture for rank two unitary groups, for modulo $p$
representations in the unramified case, has recently been proved by Gee, Liu
and Savitt \cite{GLS12}, using Liu's theory of $\left( \varphi ,\hat{G}%
\right) $-modules developed in \cite{LI10}. However, they have been unable
to explicitly determine the modulo $p$ reduction of a given $2$-dimensional
crystalline $\overline{%
%TCIMACRO{\U{211a} }%
%BeginExpansion
\mathbb{Q}
%EndExpansion
_{p}}$-representations of $G_{K_{f}}.$ Instead, they determined the possible
modulo $p$ reductions in terms of the Hodge-Tate weights, when the length of
the filtration of the corresponding admissible filtered $\varphi $-module is 
$\leq p.$

For arbitrary Hodge-Tate weights, semisimplified modulo $p$ reductions of
certain families of $2$-dimensional crystalline representations of $%
G_{K_{f}} $ were computed in \cite{DO10b}, extending the constructions in 
\cite{BLZ04} from $%
%TCIMACRO{\U{211a} }%
%BeginExpansion
\mathbb{Q}
%EndExpansion
_{p}$ to $K_{f}.$ More precisely, for any $2$-dimensional crystalline
representation $V$ of $G_{K_{f}}$ with Hodge-Tate type $\mathrm{HT}%
_{V}\left( \tau \right) =\{0,-k_{i}\},$ where the $k_{i}$ are nonnegative
integers, which is up to unramified twist either irreducible and induced
from a crystalline character of $G_{K_{2f}}\ $or split-reducible and
non-ordinary, we constructed an infinite family $\mathcal{F}\left( V\right) $
of $2$-dimensional crystalline representations of $G_{K_{f}}$ with the
following properties:

\begin{enumerate}
\item $V\in \mathcal{F}\left( V\right) ;$

\item The members of $\mathcal{F}\left( V\right) $ have Hodge-Tate type $%
\mathrm{HT}_{V}\left( \tau \right) ;$

\item The members of $\mathcal{F}\left( V\right) $ have the same modulo $p$\
reductions with respect to appropriately chosen Galois-stable lattices.
\end{enumerate}

The crystalline representations of $\mathcal{F}\left( V\right) $ were
described in terms of their corresponding by the Colmez-Fontaine theorem
(see \cite[Th\'{e}or\`{e}m A]{CF00}) admissible filtered $\varphi $-modules.
For each family $\mathcal{F}\left( V\right) ,$ the semisimplification $%
\overline{\mathcal{F}\left( V\right) }^{ss}$ of the common reduction is
independent of choices of lattices and was explicitly computed in (\cite[%
Theorems 1,5 \& 1.7]{DO10b}). Recall that, if $V$ is reducible, $\mathcal{F}%
\left( V\right) $ can contain both irreducible and reducible representations
(see \cite{DO10b}, comments after Theorem 1.7).

The modulo $p$ reduction of a given $2$-dimensional crystalline $\overline{%
%TCIMACRO{\U{211a} }%
%BeginExpansion
\mathbb{Q}
%EndExpansion
_{p}}$-representation of $G_{K_{f}}$ is generally unknown, even if the
length of filtration is $\leq p,\ $when $f\geq 2.$ The goal of this paper is
to enlarge the families $\mathcal{F}\left( V\right) $ to families of $2$%
-dimensional crystalline $\overline{%
%TCIMACRO{\U{211a} }%
%BeginExpansion
\mathbb{Q}
%EndExpansion
_{p}}$-representations of the same Hodge-Tate type and constant modulo $p$
reductions with respect to appropriately chosen Galois-stable lattices,
under the assumption that the length of filtration of $V$ is at least $p\neq
2.$ In addition to their application to Serre's modularity conjecture the
results of \cite{BLZ04} provided evidence for the existence of the $p$-adic
Langlands correspondence for $%
%TCIMACRO{\U{211a} }%
%BeginExpansion
\mathbb{Q}
%EndExpansion
_{p},$ and we expect that our results will similarly allow one to test the $%
p $-adic Langlands correspondence for $K_{f}$ currently being developed. The
proof of our theorem rests on Wach module constructions, and makes use of
the constructions in \cite{DO10b} and an idea of Berger (\cite[\S 10.3]{BB04}%
).

% $\alpha \beta \rho \alpha  \omikron \varsigma $ $\tau \varepsilon

\section{Description of the families\label{dof}}

Throughout this paper $p$\ will be a fixed odd integer prime, $K_{f}=\mathbb{%
Q}_{p^{f}}$\ the finite unramified extension of $\mathbb{Q}_{p}$\ of degree $%
f,$\ and $E$\ a finite extension of $K_{f}$\ with ring of integers $\mathcal{%
O}_{E},$\ maximal ideal $\mathfrak{m}_{E},\ $and residue field $k_{E}.$\
When the degree of $K_{f}$\ plays no role we simply write $K.$\ We denote by 
$\sigma _{K}$\ the absolute Frobenius of $K;$\ we fix once and for all a
distinguished embedding $K\overset{\tau _{0}}{\hookrightarrow }E$\ and we
let $\tau _{j}=\tau _{0}\circ \sigma _{K}^{j}$\ for all $j=0,1,...,f-1.$\ We
fix the $f$-tuple of embeddings $\mid \tau \mid :=(\tau _{0},\tau
_{1},...,\tau _{f-1});$ we denote $E^{\mid \tau \mid
}:=\tprod\nolimits_{\tau :K\hookrightarrow E}E,\ $with the embeddings
ordered as above, and we let $e_{i}:=(0,...,0,1_{i},0,...,0)\in E^{\mid \tau
\mid }$ for $i=0,1,...,f-1.$ For the language of crystalline representations
see \cite{FO88}.

\begin{notation}
\label{NOT}For $i=0,1,...,f-1,$\textit{\ let }$k_{i}$ \textit{be fixed
nonnegative integers which we call weights. Assume that after ordering them\
and omitting possibly repeated weights we get }$w_{0}<w_{1}<...<w_{t-1},$%
\textit{\ where }$w_{0}$\textit{\ is the smallest weight, }$w_{1}$\textit{\
the second smallest weight,\ ..., and }$w_{t-1}$\textit{\ is the largest
weight\ for some }$1\leq t\leq f.$\textit{\ The largest weight }$w_{t-1}$%
\textit{\ will be usually denoted by }$k\ $and throughout the paper we
assume that $k\geq p.$\textit{\ For convenience we define }$w_{-1}=0.$%
\textit{\ Let }$I_{0}:=\{0,1,...,f-1\};$\textit{\ for }$j=1,2,...,t-1$%
\textit{\ we let }$I_{j}:=\{i\in I_{0}:k_{i}>w_{j-1}\},$\textit{\ and }$%
I_{t}=\varnothing .$\textit{\ For each subset }$J\subset I_{0}$\textit{\ we
write }$f_{J}:=\tsum_{i\in J}e_{_{i}}$\textit{\ and }$E^{\mid \tau _{J}\mid
}:=f_{J}\cdot E^{\mid \tau \mid }.$\textit{\ The sets }$E^{\mid \tau
_{I_{j}}\mid }$\textit{\ are obtained as follows: }$E^{\mid \tau
_{I_{0}}\mid }$\textit{\ is the Cartesian product }$E^{f}.\ $\textit{%
Starting with }$E^{\mid \tau _{I_{0}}\mid },$\textit{\ we obtain }$E^{\mid
\tau _{I_{1}}\mid }$\textit{\ by killing\ the coordinates where the smallest
weight occurs.\ We obtain }$E^{\mid \tau _{I_{2}}\mid }$\textit{\ by further
killing\ the coordinates where the second smallest weight }$w_{1}$\textit{\
occurs and so on.}
\end{notation}

We first recall the construction of the families $\mathcal{F}\left( V\right)
\ $in \cite{DO10b}. For $i=0,1,...,f-1,$ let$\ \chi _{i}$ be a crystalline $%
E $-character of $G_{K_{f}}\ $with Hodge-Tate type $\mathrm{HT}_{\chi
_{i}}\left( \tau _{i+1}\right) =\{-1\}\ $and $\mathrm{HT}_{\chi _{j}}\left(
\tau _{i+1}\right) =\{0\}\ $if $j\neq i+1,\ $where the indices are viewed
modulo$\ f.$ Let $\{\ell _{j}\}_{0\leq j\leq 2f-1}$ be integers such that $%
\{\ell _{i},\ell _{f+i}\}=\{0,k_{i}\}$ for all $i=0,1,...,f-1.$ Up to
unramified twist, any irreducible $2$-dimensional crystalline representation 
$V$ of $G_{K_{f}}$ of Hodge-Tate type $\mathrm{HT}_{V}\left( \tau \right)
=\{0,-k_{i}\}$ induced from a crystalline character of $G_{K_{2f}}$ has the
form $V=\mathrm{Ind}_{K_{2f}}^{K_{f}}\left( \chi _{\vec{\ell}}\right) ,$
where $\chi _{\vec{\ell}}=\chi _{0}^{\ell _{1}}\cdot \chi _{1}^{\ell
_{2}}\cdots \chi _{2f-2}^{\ell _{2f-1}}\cdot \chi _{2f-1}^{\ell _{0}}\ $(cf. 
\cite[Theorem 1.3]{DO10b}). Any split-reducible non-ordinary $2$-dimensional
crystalline representation $V$ of $G_{K_{f}}$ of Hodge-Tate type $\mathrm{HT}%
_{V}\left( \tau \right) =\{0,-k_{i}\}$ is up to unramified twist of the form%
\begin{equation*}
V=\eta \cdot \chi _{0}^{\ell _{1}}\cdot \chi _{1}^{\ell _{2}}\cdots \chi
_{f-2}^{\ell _{f-1}}\cdot \chi _{f-1}^{\ell _{0}}\tbigoplus \chi _{0}^{\ell
_{1+f}}\cdot \chi _{1}^{\ell _{2+f}}\cdots \chi _{f-2}^{\ell _{2f-1}}\cdot
\chi _{f-1}^{\ell _{f}},
\end{equation*}%
where $\eta $ is an unramified character, with both vectors $\left( \ell
_{0},\ell _{1},...,\ell _{f-1}\right) $ and $\left( \ell _{f},\ell
_{f+1},...,\ell _{2f-1}\right) $ nonzero (cf. \cite[Theorem 1.7]{DO10b}).
Fix a representation $V\ $as above. Let $\{X_{i}\}_{1\leq i\leq f}$\ be a
set of indeterminates and let $P_{i}\left( X_{i}\right) \in M_{2}\left( 
\mathcal{O}_{E}[X_{i}]\right) $ be a matrix of one of the following four
types:\ 
\begin{equation*}
t_{1}\mathbf{:}\left( 
\begin{array}{cc}
p^{k_{i}} & 0 \\ 
X_{i} & 1%
\end{array}%
\right) ,\ \ \ \ t_{2}\mathbf{:}\left( 
\begin{array}{cc}
X_{i} & 1 \\ 
p^{k_{i}} & 0%
\end{array}%
\right) ,\ \ \ \ t_{3}\mathbf{:}\left( 
\begin{array}{cc}
1 & X_{i} \\ 
0 & p^{k_{i}}%
\end{array}%
\right) ,\ \ \ \ t_{4}:\left( 
\begin{array}{cc}
0 & p^{k_{i}} \\ 
1 & X_{i}%
\end{array}%
\right) .
\end{equation*}%
Let $P(\overrightarrow{X})=\left( P_{1}\left( X_{1}\right) ,P_{2}\left(
X_{2}\right) ,...,P_{f}\left( X_{f}\right) \right) ,$ where the indices are
viewed $\func{mod}$ $f,$ and choose the type of the matrix $P_{i}\left(
X_{i}\right) $ as follows: %$\epsilon \kappa \alpha \tau \eta, 
\noindent If $f=1$ we choose $P_{1}=t_{2}.$ A{}ssume that $f\geq 2.$

\noindent Case (i). $V$ is induced.

(1) If $\ell _{1}=0,$ $P_{1}=t_{2};$

(\noindent 2) If $\ell _{1}=k_{1}>0,$ $P_{1}=t_{1}.$ \noindent

\noindent For $i=2,3,...,f-1$ we choose the type of the matrix $P_{i}$ as
follows:\noindent

(1) If $\ell _{i}=0,$ then: \noindent

\begin{itemize}
\item If an even number of coordinates of $(P_{1},P_{2},...,P_{i-1})$ is of
even type, $P_{i}=t_{2};$ \noindent 
%\sigma \alpha \xi $ $\beta \omikron \eta \theta

\item If an odd number of coordinates of $(P_{1},P_{2},...,P_{i-1})$ is of
even type, $P_{i}=t_{1}.$
\end{itemize}

\noindent

(\noindent 2) If $\ell _{i}=k_{i}>0,$ then: \noindent

\begin{itemize}
\item If an even number of coordinates of $(P_{1},P_{2},...,P_{i-1})$ is of
even type, $P_{i}=t_{1};$ \noindent

\item If an odd number of coordinates of $(P_{1},P_{2},...,P_{i-1})$ is of
even type, $P_{i}=t_{2}.$
\end{itemize}

%\Gamma \alpha \mu \omega \tau \omikron \nu 
\noindent Finally, we choose the type of the matrix $P_{0}:=P_{f}$ as
follows:

\noindent

(\noindent 1) If $\ell _{0}=0,$ then:

\begin{itemize}
\item \noindent If an even number of coordinates of $%
(P_{1},P_{2},...,P_{f-1})$ is of even type, $P_{0}=t_{4};$

\item If an odd number of coordinates of $(P_{1},P_{2},...,P_{f-1})$ is of
even type, $P_{0}=t_{3}.$
\end{itemize}

(\noindent 2) If $\ell _{0}=k_{0}>0,$ then:

\begin{itemize}
\item If an even number of coordinates of $(P_{1},P_{2},...,P_{f-1})$ is of
even type, $P_{0}=t_{2};$

\item If an odd number of coordinates of $(P_{1},P_{2},...,P_{f-1})$ is of
even type, $P_{0}=t_{1}.$
\end{itemize}

\noindent Case (ii). $V$ is split reducible and non-ordinary.

\noindent The $\left( f-1\right) $-tuple $\left(
P_{1},P_{2},...,P_{f-1}\right) $ is chosen as in Case (i) above. If $\eta
=\eta _{c}$ is the unramified character which maps the geometric Frobenius
element $\mathrm{Frob}_{K_{f}}$ of $G_{K_{f}}$ to $c,$ we replace the entry $%
p^{k_{0}}$ in the definition of the matrix $P_{0}$ by $cp^{k_{0}}.$ The type
of the matrix $P_{0}:=P_{f}$ is chosen as follows: \noindent

(1) If $\ell _{0}=0,$ then: \noindent

\begin{itemize}
\item If an even number of coordinates of $\left(
P_{1},P_{2},...,P_{f-1}\right) $ is of even type, $P_{0}=t_{3};$

\item If an odd number of coordinates of $\left(
P_{1},P_{2},...,P_{f-1}\right) $ is of even type, $P_{0}=t_{4}.$
\end{itemize}

\noindent

(2) If $\ell _{0}=k_{0}>0,$ then:

\begin{itemize}
\item If an even number of coordinates of $\left(
P_{1},P_{2},...,P_{f-1}\right) $ is of even type, $P_{0}=t_{1};$

\item If an odd number of coordinates of $\left(
P_{1},P_{2},...,P_{f-1}\right) $ is of even type, $P_{0}=t_{2}.$
\end{itemize}

\noindent Recall that $k\geq p$ and let%
\begin{equation}
m:=\left\{ 
\begin{array}{l}
\lfloor \frac{k-1}{p-1}\rfloor \ \ \ \ \ \mathrm{if}\mathnormal{\ \ }%
k_{i}\neq p\ \text{for some }i, \\ 
\ \ \ 0\ \ \text{\ }\ \ \ \ \ \mathrm{if}\mathnormal{\ }\text{ }k_{i}=p\ 
\text{for all }i.%
\end{array}%
\right.  \label{m}
\end{equation}%
For any $\vec{\alpha}=\left( \alpha _{1},\alpha _{2},...,\alpha _{f}\right)
\in \left( p^{m}\mathfrak{m}_{E}\right) ^{f},$ let $P\left( \vec{\alpha}%
\right) $ be the matrix obtained by evaluating each indeterminate $X_{i}$ at 
$\alpha _{i}.$ We defined $\mathcal{F}\left( V\right) $ as the family of $2$%
-dimensional crystalline representations $\{V\left( \vec{\alpha}\right) ,\ 
\vec{\alpha}\in \left( p^{m}\mathfrak{m}_{E}\right) ^{f}\}\ $corresponding
by the Colmez-Fontaine theorem to the family of admissible filtered $\varphi 
$-modules obtained by equipping $\mathbb{D}(\vec{\alpha})=E^{\mid \tau \mid
}\eta _{1}\tbigoplus E^{\mid \tau \mid }\eta _{2}$ with the Frobenius action
defined by $\left( \varphi \left( \eta _{1}\right) ,\varphi \left( \eta
_{2}\right) \right) =\left( \eta _{1},\eta _{2}\right) P(\vec{\alpha})$ and
the filtration 
\begin{equation*}
\ \ \ \ \ \ \ \ \ \ \ \ \mathrm{Fil}^{\mathrm{j}}(\mathbb{D}\left( \vec{%
\alpha}\right) )=\left\{ 
\begin{array}{l}
E^{\mid \tau \mid }\eta _{1}\bigoplus E^{\mid \tau \mid }\eta _{2}\ \ \ \ \
\ \text{if\ \ \ }j\leq 0, \\ 
E^{\mid \tau _{I_{s}}\mid }\left( \vec{x}\eta _{1}+\vec{y}\eta _{2}\right) \
\ \ \text{if \ }\ 1+w_{s-1}\leq j\leq w_{s},\ \text{for }s=0,1,...,t-1, \\ 
\ \ \ \ \ \ \ \ \ \ 0\ \ \ \ \ \ \ \ \ \ \ \ \ \ \ \mathrm{if\ \ }\text{\ }%
j\geq 1+w_{t-1},%
\end{array}%
\right.
\end{equation*}%
where $\vec{x}=(x_{0},x_{1},...,x_{f-1})$ and $\vec{y}%
=(y_{0},y_{1},...,y_{f-1}),$ with 
\begin{equation*}
(x_{i},y_{i})=\left\{ 
\begin{array}{l}
(1,-\alpha _{i})\text{\ \ \ \ \ if }P_{i}\ \text{has type }1\ \text{or}\ 2,
\\ 
(-\alpha _{i},1)\ \ \ \ \ \text{if }P_{i}\ \text{has type }3\ \text{or }4,%
\end{array}%
\right.
\end{equation*}%
for any $\vec{\alpha}\in \left( p^{m}\mathfrak{m}_{E}\right) ^{f}.$ By the
construction of these families in \cite{DO10b} it follows that $V(\vec{0}%
)=V. $ We now enlarge each such family $\mathcal{F}\left( V\right) ,\ $%
preserving the Hodge-Tate types, and leaving unchanged the modulo$\ p$
reductions with respect to appropriately chosen Galois-stable $\mathcal{O}%
_{E}$-lattices.

Let $\alpha \left( k\right) :=\tsum\nolimits_{n=0}^{\infty }\lfloor \frac{k}{%
p^{n}\left( p-1\right) }\rfloor .$ For any $A=\left(
A_{1},A,...,A_{f}\right) \in M_{2}\left( p^{\alpha \left( k-1\right) }%
\mathcal{O}_{E}\right) ^{\mid \tau \mid }$ we define%
\begin{equation*}
P_{A}(\overrightarrow{X}):=\left( \overrightarrow{Id}+A\right) P\left( 
\overrightarrow{X}\right) .
\end{equation*}%
If $\vec{\alpha}\in \left( p^{m}\mathfrak{m}_{E}\right) ^{f}$ and $A\in
M_{2}\left( p^{\alpha \left( k-1\right) }\mathcal{O}_{E}\right) ^{\mid \tau
\mid },$ we denote by $\left( \mathbb{D}_{A\ }(\vec{\alpha}),\varphi \right) 
$ the filtered $\varphi $-module obtained by equipping $\mathbb{D}_{A}(\vec{%
\alpha})=E^{\mid \tau \mid }\eta _{1}\tbigoplus E^{\mid \tau \mid }\eta _{2}$
with the Frobenius endomorphism defined by $\left( \varphi \left( \eta
_{1}\right) ,\varphi \left( \eta _{2}\right) \right) =\left( \eta _{1},\eta
_{2}\right) P_{A}(\vec{\alpha})$ and with the same filtration as $\left( 
\mathbb{D}(\vec{\alpha}),\varphi \right) $ independently of $A.$ Such a
filtered $\varphi $-module turns out to be admissible. Let $V_{A}\left( \vec{%
\alpha}\right) $ be the crystalline representation corresponding by the
Colmez-Fontaine theorem to $\left( \mathbb{D}_{A\ }(\vec{\alpha}),\varphi
\right) ,$ and let%
\begin{equation*}
\mathcal{G}\left( V\right) =\tbigcup\nolimits_{A\in \mathcal{M}\left(
k\right) }\left\{ V_{A}\left( \vec{\alpha}\right) :\ \vec{\alpha}\in \left(
p^{m}\mathfrak{m}_{E}\right) ^{f}\right\} ,\ \text{where}\ \mathcal{M}\left(
k\right) =M_{2}\left( p^{1+\alpha \left( k-1\right) }\mathcal{O}_{E}\right)
^{\mid \tau \mid }.
\end{equation*}

\begin{theoremA}
\label{thm}

\begin{enumerate}
\item[(i)] For any $\vec{\alpha}\in \left( p^{m}\mathfrak{m}_{E}\right) ^{f}$
and any $A\in M_{2}\left( p^{\alpha \left( k-1\right) }\mathcal{O}%
_{E}\right) ^{\mid \tau \mid },\ $the filtered $\varphi $-modules $\mathbb{D}%
_{A\ }(\vec{\alpha})$ are admissible and the corresponding crystalline
representations have Hodge-Tate type $\mathrm{HT}\left( \tau _{i}\right)
=\{0,-k_{i}\};$

\item[(ii)] For any $\vec{\alpha}\in \left( p^{m}\mathfrak{m}_{E}\right)
^{f} $ and any $A\in M_{2}\left( p^{\alpha \left( k-1\right) }\mathcal{O}%
_{E}\right) ^{\mid \tau \mid },$ there exist $G_{K_{f}}$-stable $\mathcal{O}%
_{E}$-lattices with respect to which $\overline{V}_{A}\left( \vec{\alpha}%
\right) =\overline{V}_{A}(\vec{0});$ 
% $ \varepsilon \beta \rho \alpha \iota \omikron$

\item[(iii)] There exist $G_{K_{f}}$-stable $\mathcal{O}_{E}$-lattices with
respect to which all members of $\mathcal{G}(V)$ have the same modulo$\ p$
reduction $\overline{\mathcal{G}\left( V\right) }.$ Moreover, $\overline{%
\mathcal{G}\left( V\right) }=\overline{V}.$
\end{enumerate}
\end{theoremA}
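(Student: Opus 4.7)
The plan is to realise each filtered $\varphi$-module $\mathbb{D}_{A}(\vec{\alpha})$ as arising from an explicit Wach module $\mathbb{N}_{A}(\vec{\alpha})$ over $\mathcal{O}_{E}[[\pi]]$ and to extract all three statements from properties of this Wach module. Following Berger's method in \cite[\S 10.3]{BB04} and its adaptation to the unramified base $K_{f}$ in \cite{DO10b}, I would look for a matrix $N_{A}(\vec{\alpha})\in M_{2}(\mathcal{O}_{E}[[\pi]])^{\mid\tau\mid}$ representing $\varphi$ on a fixed basis of $\mathbb{N}_{A}(\vec{\alpha})$, subject to: (a)~$N_{A}(\vec{\alpha})\bmod \pi = P_{A}(\vec{\alpha})$, so that inverting $p$ and reducing modulo $\pi$ recovers $\mathbb{D}_{A}(\vec{\alpha})$ with its Frobenius; (b)~a compatibility relation with the $\Gamma$-action making $\mathbb{N}_{A}(\vec{\alpha})$ an honest Wach module with trivial $\Gamma$-action modulo $\pi$; (c)~the induced filtration on $(\mathbb{N}_{A}(\vec{\alpha})/\pi)[1/p]$ matching the one prescribed in Notation~\ref{NOT}. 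Writing $N_{A}(\vec{\alpha})=P_{A}(\vec{\alpha})+\pi R_{A}(\vec{\alpha})$, condition~(b) becomes a functional equation for $R_{A}(\vec{\alpha})$ that I would solve by successive approximation in the $(p,\pi)$-adic topology, starting from the solution for $A=0$, $\vec{\alpha}=\vec{0}$ that is implicit in \cite{DO10b}. Convergence is governed by the loss of $p$-divisibility coming from dividing by $q^{k-1}$ at each step, which is exactly the quantity $\alpha(k-1)=\sum_{n\ge 0}\lfloor(k-1)/p^{n}(p-1)\rfloor$; together with the analogous role of $m$ for the $\alpha_{i}$, this explains why the hypotheses $\vec{\alpha}\in(p^{m}\mathfrak{m}_{E})^{f}$ and $A\in M_{2}(p^{\alpha(k-1)}\mathcal{O}_{E})^{\mid\tau\mid}$ are exactly what the iteration needs.

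With the Wach module in hand, part~(i) is immediate: a two-dimensional Wach module over $\mathcal{O}_{E}[[\pi]]$ automatically produces a weakly admissible filtered $\varphi$-module by Berger's equivalence, and the prescribed filtration makes the Hodge-Tate type equal to $\{0,-k_{i}\}$ in each embedding.

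For part~(ii), the Wach module $\mathbb{N}_{A}(\vec{\alpha})$ provides a canonical $G_{K_{f}}$-stable $\mathcal{O}_{E}$-lattice $T_{A}(\vec{\alpha})$ inside $V_{A}(\vec{\alpha})$, and $\overline{V}_{A}(\vec{\alpha})$ is read off from the reduction $\overline{\mathbb{N}}_{A}(\vec{\alpha})$ modulo $\mathfrak{m}_{E}$. The iterative construction forces every contribution to $R_{A}(\vec{\alpha})$ that depends non-trivially on $\vec{\alpha}$ to be divisible by $p$, because each such contribution carries a factor of some $\alpha_{i}\in p^{m}\mathfrak{m}_{E}$ (when $k_{i}=p$ for all $i$, so $m=0$ by \eqref{m}, a separate direct argument handles this degenerate case). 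Hence $\overline{\mathbb{N}}_{A}(\vec{\alpha})=\overline{\mathbb{N}}_{A}(\vec{0})$, which gives (ii). For part~(iii), the stricter hypothesis $A\in\mathcal{M}(k)$ contributes one extra power of $p$ to the $A$-dependent part of $N_{A}(\vec{0})$, yielding $\overline{\mathbb{N}}_{A}(\vec{0})=\overline{\mathbb{N}}_{0}(\vec{0})$. Combined with (ii), every member of $\mathcal{G}(V)$ has the common reduction $\overline{V}_{0}(\vec{0})$, which equals $\overline{V}$ by \cite[Theorems 1.5 and 1.7]{DO10b}.

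The main obstacle is the $p$-adic bookkeeping inside the Wach recursion producing $N_{A}(\vec{\alpha})$. Specifically, one must verify that at each iteration (i)~the correction stays in $M_{2}(\mathcal{O}_{E}[[\pi]])^{\mid\tau\mid}$ so that the series converges, (ii)~the part depending on $\vec{\alpha}$ remains in $p\cdot M_{2}(\mathcal{O}_{E}[[\pi]])^{\mid\tau\mid}$, and (iii)~under the stronger condition $A\in M_{2}(p^{1+\alpha(k-1)}\mathcal{O}_{E})^{\mid\tau\mid}$ the $A$-dependent part of $N_{A}(\vec{0})$ is also in $p\cdot M_{2}(\mathcal{O}_{E}[[\pi]])^{\mid\tau\mid}$. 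Once these valuation estimates are pinned down, parts (i)--(iii) follow almost formally from the dictionary between Wach modules and Galois-stable lattices inside crystalline representations with non-negative Hodge-Tate weights.
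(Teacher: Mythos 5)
Your outline follows the same overall route as the paper: realize the families through Wach modules, drive the construction by Berger's perturbation idea from \cite[\S 10.3]{BB04} adapted to the unramified base as in \cite{DO10b}, and read off parts (i)--(iii) from congruences between Wach-module matrices. You also correctly identify $\alpha(k-1)$ as the total loss of $p$-integrality coming from dividing by the $1-\chi(\gamma)^{j}$ in the iteration, which is why the hypotheses on $A$ and $\vec{\alpha}$ take the form they do. However, there is a genuine gap in the step from ``the $A$-dependent part of the Frobenius matrix $N_{A}(\vec{0})$ is in $p\cdot M_{2}(\mathcal{O}_{E}[[\pi]])^{\mid\tau\mid}$'' to the conclusion $\overline{\mathbb{N}}_{A}(\vec{0})=\overline{\mathbb{N}}_{0}(\vec{0})$.

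The $A$-dependence of $\hat{A}$ being $\equiv 0\ \mathrm{mod}\ p$ (the paper's Lemma \ref{perturb lemma}(iv)) controls the Frobenius matrix $\Pi_{\hat{A}}$, but a Wach module carries both a Frobenius matrix and a family of $\Gamma_{K}$-action matrices $G_{\gamma,\hat{A}}$, and the latter are only defined implicitly through a $\pi$-adic iteration. At step $s$ one solves an equation of the shape $H_{1}^{(s)}-Q_{f}H_{1}^{(s)}\bigl(p^{f(s-1)}Q_{f}^{-1}\bigr)=V^{(s)}$, and to conclude that $H_{1,\hat{A}}^{(s)}\equiv H_{1}^{(s)}\ \mathrm{mod}\ I$ (with $I=(p,X_{1},\dots,X_{f})$) one needs a \emph{uniqueness} statement: the only $B$ with $B\equiv Q_{f}B\bigl(p^{f(s-1)}Q_{f}^{-1}\bigr)\ \mathrm{mod}\ I$ is $B\equiv 0\ \mathrm{mod}\ I$. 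This is the paper's Lemma \ref{daggermodp lemma}, and its proof is not formal: it relies on showing that $Q_{f}\ \mathrm{mod}\ I$ equals $E_{12}$ or $E_{21}$ (and computing $p^{fk}Q_{f}^{-1}\ \mathrm{mod}\ I$), which in turn depends on the specific, carefully-chosen types $t_{1},\dots,t_{4}$ of the matrices $P_{i}$ from \S\ref{dof} and on the arithmetic facts in Lemma \ref{RMK} (that $\mathrm{Tr}(Q_{f})\notin\overline{\mathbb{Q}_{p}}$, etc.). Without this, your claim that the $A$-dependence of the $\Gamma$-action matrices vanishes mod $I$ is unjustified, since the iteration a priori allows multiple solutions and could introduce $A$-dependence not killed by the extra power of $p$. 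A secondary, smaller omission is the verification that the filtration of the Wach module (computed via Theorem \ref{berger thm}(iii)) matches the prescribed filtration on $\mathbb{D}_{A}(\vec{\alpha})$; this is the content of Proposition \ref{twmodptext} and requires an explicit computation that you wave at rather than carry out. Finally, for part (ii) you argue in terms of divisibility by $p$, but the relevant congruence is mod $\mathfrak{m}_{E}$: it is the fact that $\vec{a}\in\mathfrak{m}_{E}^{f}$ (not the extra power of $p$ from $m$) that gives $\Pi_{\hat{A}}(\vec{a})\equiv\Pi_{\hat{A}}(\vec{0})$ and $G_{\gamma,\hat{A}}(\vec{a})\equiv G_{\gamma,\hat{A}}(\vec{0})$ mod $\mathfrak{m}_{E}$.
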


\begin{remark}

\begin{enumerate}
\item By (\cite[Theorems 1.5 \& 1.7]{DO10b}), 
\begin{equation*}
\left( \overline{\mathcal{G}\left( V\right) }_{\mid I_{K_{f}}}\right)
^{ss}=\left\{ 
\begin{array}{l}
\omega _{2f,\bar{\tau}_{0}}^{\beta }\oplus \omega _{2f,\bar{\tau}%
_{0}}^{p^{f}\beta },\ \text{where\ }\beta =-\tsum\nolimits_{i=0}^{2f-1}\ell
_{i}p^{i}\ \text{if }V\ \text{is irreducible and induced,} \\ 
\\ 
\omega _{f,\bar{\tau}_{0}}^{\beta }\oplus \omega _{f,\bar{\tau}_{0}}^{\beta
^{\prime }},\ \text{where\ }\beta =-\tsum\nolimits_{i=0}^{f-1}\ell
_{i}p^{i}\ \text{and}\ \beta ^{\prime }=-\tsum\nolimits_{i=0}^{f-1}\ell
_{i+f}p^{i}\ \text{if }V\ \text{is } \\ 
\\ 
\text{split-reducible and nonordinary.}%
\end{array}%
\right.
\end{equation*}%
Recall that the level $f$ fundamental character $\omega _{f,\bar{\tau}%
_{0}}:I_{K_{f}}\overset{}{\rightarrow }k_{E}^{\times }$ is obtained by
composing the homomorphism $I_{K_{f}}\rightarrow k_{K_{f}}^{\times }$
obtained from local class field theory (with uniformizers corresponding to
geometric Frobenius elements) with the embedding of residue fields $k_{K_{f}}%
\overset{\bar{\tau}_{0}}{\rightarrow }k_{E}$ obtained from the distinguished
embedding$\ K\overset{\tau _{0}}{\hookrightarrow }E.$

\item For the rest of this remark assume that $f\geq 2.$ In this case, for a 
$2$-dimensional crystalline $\overline{%
%TCIMACRO{\U{211a} }%
%BeginExpansion
\mathbb{Q}
%EndExpansion
_{p}}$-representation $V$ of $G_{K_{f}},$ the characteristic polynomial of
Frobenius and a choice of the filtration of the corresponding by the
Colmez-Fontaine theorem admissible filtered $\varphi $-module $\mathbb{D}%
\left( V\right) $ fail to determine its isomorphism class. Assuming that $%
\mathbb{D}\left( V\right) $ is Frobenius-semisimple and non-Frobenius-scalar
and fixing the characteristic polynomial of Frobenius and a choice for the
filtration, the additional datum required to determine the isomorphism class
of $V$ is (roughly) an element of $\mathbb{P}^{f-1}\left( E\right) \ $(for a
precise statement see \cite[\S 7]{DO10a}). The isomorphism classes of
non-Frobenius-semisimple or Frobenius-scalar filtered $\varphi $-modules are
in general messier to describe (see \cite[\S 6]{DO10a}).

\item The representations of $\mathcal{G}\left( V\right) $ yield additional
\textquotedblleft projective parameters\textquotedblright\ compared to the
set of \textquotedblleft projective parameters\textquotedblright\ attached
to the Frobenius-semisimple and non-Frobenius-scalar unramified twists of
members of $\mathcal{F}\left( V\right) .$ However, they yield no new
characteristic polynomials or filtrations.

\item The formulas for the \textquotedblleft projective
parameters\textquotedblright\ of the Frobenius-semisimple and
non-Frobenius-scalar representations of the families $\mathcal{G}\left(
V\right) $ look particularly abhorrent (see for instance the proof of \cite[%
Proposition 6.21]{DO10b}). The situation becomes even worse with the
non-Frobenius-semisimple, and in especially with the Frobenius-scalar
members of these families. This makes it hard to give a clean description in
terms of the classification of admissible filtered $\varphi $-modules
obtained in \cite{DO10a} of how many $2$-dimensional crystalline
representations of $G_{K_{f}}$ with Hodge-Tate weights in the range $[0;\ p]$
we are able to compute the semisimplified modulo $p$ reduction of, using
Theorem A, and what is possibly missing.

\item Theorem A can be thought of as a local constancy result for modulo $p$
reductions of $2$-dimensional crystalline representations of $G_{K_{f}}\ $%
within certain families. For results of similar flavor for $2$-dimensional
crystalline representations of $G_{%
%TCIMACRO{\U{211a} }%
%BeginExpansion
\mathbb{Q}
%EndExpansion
_{p}},$ see \cite{Ber12}.
\end{enumerate}
\end{remark}

\section{Families of Wach modules}

\subsection{\'{E}tale $\left(\protect\varphi ,\Gamma \right) $-modules and
Wach modules}

Let $\mathcal{K}_{n}=K(\zeta _{p^{n}}),\ $where $\zeta _{p^{n}}\ $is a
primitive $\ p^{n}$-th$\ $root\ of\ unity$\ $inside $\overline{%
%TCIMACRO{\U{211a} }%
%BeginExpansion
\mathbb{Q}
%EndExpansion
_{p},}\ $and let $K_{\infty }=\cup _{n\geq 1}\mathcal{K}_{n}.$ Let $\chi
:G_{K}\rightarrow 
%TCIMACRO{\U{2124} }%
%BeginExpansion
\mathbb{Z}
%EndExpansion
_{p}^{\times }$ be the cyclotomic character, $H_{K}:=\ker \chi =\mathrm{Gal}(%
\overline{%
%TCIMACRO{\U{211a} }%
%BeginExpansion
\mathbb{Q}
%EndExpansion
_{p}}/K_{\infty }),$\ and $\Gamma _{K}:=G_{K}/H_{K}=\mathrm{Gal}(K_{\infty
}/K).$ Fontaine \cite{FO90} has constructed topological rings $\mathbb{A}$
and $\mathbb{B}$ endowed with continuous commuting Frobenius $\varphi $ and $%
G_{%
%TCIMACRO{\U{211a} }%
%BeginExpansion
\mathbb{Q}
%EndExpansion
_{p}}$-actions. Unless otherwise stated and whenever applicable, continuity
will mean continuity with respect to the topologies induced by the weak
topologies of the topological rings $\mathbb{A}$ and $\mathbb{B}.$ Let $%
\mathbb{A}_{K}=\mathbb{A}^{H_{K}}$ and $\mathbb{B}_{K}=\mathbb{B}^{H_{K}},$
and let $\mathbb{A}_{K\,,E}:=\mathcal{O}_{E}\otimes _{%
%TCIMACRO{\U{2124} }%
%BeginExpansion
\mathbb{Z}
%EndExpansion
_{p}}\mathbb{A}_{K}$ and $\mathbb{B}_{K\,,E}:=E\otimes _{%
%TCIMACRO{\U{211a} }%
%BeginExpansion
\mathbb{Q}
%EndExpansion
_{p}}\mathbb{B}_{K}.$ The actions of $\varphi $ and $\Gamma _{K}$ extend to $%
\mathbb{A}_{K,E}\ $and $\mathbb{B}_{K\,,E}$ by $\mathcal{O}_{E}$ (resp. $E$%
)-linearity, and one easily sees that $\mathbb{A}_{K,E}=\mathbb{A}%
_{E}^{H_{K}}$ and $\mathbb{B}_{K,E}=\mathbb{B}_{E}^{H_{K}}.$

\begin{definition}
\label{Wach}A $(\varphi ,\Gamma )$-module over $\mathbb{A}_{K,E}$\ (resp. $%
\mathbb{B}_{K,E}$) is an $\mathbb{A}_{K,E}$-module of finite type (resp. a
free $\mathbb{B}_{K,E}$-module of finite type) endowed with a semilinear and
continuous action of $\Gamma _{K},\ $ and with a semilinear map $\varphi $
which commutes with the action of $\Gamma _{K}.\ $ A $(\varphi ,\Gamma )$%
-module $M$\ over $\mathbb{A}_{K,E}$\ is called \'{e}tale if $\varphi ^{\ast
}(M)=M,$\ where $\varphi ^{\ast }(M)$\ is the $\mathbb{A}_{K,E}$-module
generated by the set $\varphi (M).$\ A $(\varphi ,\Gamma )$-module $M$\ over 
$\mathbb{B}_{K,E}$\ is called \'{e}tale if it contains a basis $%
(e_{1},...,e_{d})$\ over $\mathbb{B}_{K,E}$\ such that $\left( \varphi
(e_{1}),...,\varphi (e_{d})\right) =(e_{1},...,e_{d})A,$\ for some matrix $%
A\in \mathrm{GL}_{d}\left( \mathbb{A}_{K,E}\right) .$
\end{definition}

\noindent \indent If $V$ is a continuous $E$-linear representation of $G_{K}$
we equip the $\mathbb{B}_{K,E}$-module $\mathbb{D}(V):=\left( \mathbb{B}%
_{E}\otimes _{E}V\right) ^{H_{K}}$ with a Frobenius endomorphism $\varphi $
defined by $\varphi (b\otimes v):=\varphi (b)\otimes v,$ where $\varphi $ on
the right hand side is the Frobenius of $\mathbb{B}_{E},$ and with an action
of $\Gamma _{K}$ given by$\ \bar{g}(b\otimes v):=gb\otimes gv$ for any $g\in
G_{K}.$ This $\Gamma _{K}$-action commutes with $\varphi $ and is
continuous. Moreover, $\mathbb{D}(V)$ is an \'{e}tale $\left( \varphi
,\Gamma \right) $-module over $\mathbb{B}_{K,E}.$ Conversely, if $D$ is an 
\'{e}tale $\left( \varphi ,\Gamma \right) $-module over $\mathbb{B}_{K,E},$
let $\mathbb{V}(D):=\left( \mathbb{B}_{E}\otimes _{\mathbb{B}_{K,E}}D\right)
^{\varphi =1},$ where $\varphi (b\otimes d):=\varphi (b)\otimes \varphi (d).$
The $E$-vector space $\mathbb{V}(D)$ is finite dimensional and is equipped
with a continuous $E$-linear $G_{K}$-action given by $g(b\otimes
d):=gb\otimes \bar{g}d.$ We have the following theorem of Fontaine.

\begin{theorem}
\cite{FO90}\label{fontaine|s equivalence}

\begin{enumerate}
\item[(i)] There is an equivalence of categories between continuous $E$%
-linear representations of $G_{K}\ $and \'{e}tale $(\varphi ,\Gamma )$%
-modules over $\mathbb{B}_{K,E}$ given by%
\begin{equation*}
\mathbb{D}:\mathrm{Rep}_{E}\left( G_{K}\right) \rightarrow \mathcal{M}%
od_{(\varphi ,\Gamma )}^{\ \acute{e}t}\left( \mathbb{B}_{K,E}\right)
:V\longmapsto \mathbb{D}(V):=\left( \mathbb{B}_{E}\otimes _{E}V\right)
^{H_{K}},
\end{equation*}%
with quasi-inverse functor 
\begin{equation*}
\mathbb{V}:\mathcal{M}od_{(\varphi ,\Gamma )}^{\ \acute{e}t}\left( \mathbb{B}%
_{K,E}\right) \rightarrow \mathrm{Rep}_{E}\left( G_{K}\right) :D\longmapsto 
\mathbb{V}(D):=\left( \mathbb{B}_{E}\otimes _{\mathbb{B}_{K,E}}D\right)
^{\varphi =1}.
\end{equation*}

\item[(ii)] There is an equivalence of categories between continuous $%
\mathcal{O}_{E}$-linear representations of $G_{K}\ $and \'{e}tale $(\varphi
,\Gamma )$-modules over $\mathbb{A}_{K,E}$ given by 
\begin{equation*}
\mathbb{D}:\mathrm{Rep}_{\mathcal{O}_{E}}\left( G_{K}\right) \rightarrow 
\mathcal{M}od_{(\varphi ,\Gamma )}^{\ \acute{e}t}\left( \mathbb{A}%
_{K,E}\right) :\mathrm{T}\longmapsto \mathbb{D}(T):=\left( \mathbb{A}%
_{E}\otimes _{\mathcal{O}_{E}}\mathrm{T}\right) ^{H_{K}},
\end{equation*}%
with quasi-inverse functor 
\begin{equation*}
\mathbb{T}:\mathcal{M}od_{(\varphi ,\Gamma )}^{\ \acute{e}t}\left( \mathbb{A}%
_{K,E}\right) \rightarrow \mathrm{Rep}_{\mathcal{O}_{E}}\left( G_{K}\right)
:D\longmapsto \mathbb{T}(D):=\left( \mathbb{A}_{E}\otimes _{\mathbb{A}%
_{K,E}}D\right) ^{\varphi =1}.\noindent
\end{equation*}
\end{enumerate}
\end{theorem}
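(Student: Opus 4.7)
The plan is to reduce the statement to its archetype: the equivalence between continuous $\mathbb{Z}_p$-linear representations of $G_K$ and \'{e}tale $(\varphi,\Gamma)$-modules over $\mathbb{A}_K$. The $\mathcal{O}_E$-linear version in (ii) then follows by base change along the finite free extension $\mathbb{Z}_p\hookrightarrow\mathcal{O}_E$: since $\mathbb{A}_{K,E}=\mathcal{O}_E\otimes_{\mathbb{Z}_p}\mathbb{A}_K$ and the functors $\mathbb{D}$ and $\mathbb{T}$ are defined by taking $H_K$-invariants and $\varphi$-invariants, they commute with scalar extension along a flat map. Part (i) will be deduced from part (ii) by inverting $p$: every $E$-linear $G_K$-representation contains a $G_K$-stable $\mathcal{O}_E$-lattice by compactness of $G_K$, and every \'{e}tale $(\varphi,\Gamma)$-module over $\mathbb{B}_{K,E}$ contains an \'{e}tale $(\varphi,\Gamma)$-module over $\mathbb{A}_{K,E}$ obtained from a $\varphi$-stable basis whose existence is built into the definition.

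The heart of the argument is the mod-$p$ case. Let $\mathbb{E}:=\mathbb{A}/p\mathbb{A}$, a perfect field of characteristic $p$, and set $\mathbb{E}_K:=\mathbb{E}^{H_K}$. The essential external input is the theory of the field of norms of Fontaine--Wintenberger, which identifies $\mathbb{E}_K$ with the norm field of the arithmetically profinite extension $K_\infty/K$ and, crucially, identifies $H_K$ with the absolute Galois group of $\mathbb{E}_K$ acting on a fixed separable closure $\mathbb{E}_K^{\mathrm{sep}}\subset\mathbb{E}$. Granted this, the equivalence between $\mathbb{F}_p$-representations of $H_K$ and \'{e}tale $\varphi$-modules over $\mathbb{E}_K$, given by $\bar{V}\mapsto(\mathbb{E}\otimes_{\mathbb{F}_p}\bar{V})^{H_K}$ and $\bar{D}\mapsto(\mathbb{E}\otimes_{\mathbb{E}_K}\bar{D})^{\varphi=1}$, is a standard consequence of Hilbert~90 for $\mathrm{GL}_n$ over a separably closed field (equivalently, Lang--Steinberg applied to $\mathrm{GL}_n$ over $\mathbb{E}$). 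Adding compatible $\Gamma_K$-actions on both sides upgrades this to an equivalence between mod-$p$ representations of $G_K$ and \'{e}tale $(\varphi,\Gamma)$-modules over $\mathbb{E}_K$, with the continuity of the $\Gamma_K$-action matching the continuity demanded in Definition~\ref{Wach}.

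The bootstrap to $\mathbb{A}_K$-coefficients proceeds by d\'{e}vissage. Both categories are abelian and both functors are exact; induction on $n$, using the five-lemma on the short exact sequences $0\to p^nT/p^{n+1}T\to T/p^{n+1}T\to T/p^nT\to 0$ and their counterparts on the module side, extends the equivalence to $\mathbb{Z}/p^n$-coefficients. Since $\mathbb{A}_K$ is $p$-adically complete and separated and continuous $\mathbb{Z}_p$-representations of $G_K$ are inverse limits of their $\mathbb{Z}/p^n$-reductions, passage to the inverse limit yields the $\mathbb{Z}_p$-statement; quasi-inverseness of $\mathbb{D}$ and $\mathbb{T}$ then amounts to verifying that the canonical adjunction morphisms are isomorphisms modulo $p$ and invoking Nakayama together with $p$-adic completeness. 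The principal obstacle is the field-of-norms identification $H_K\cong\mathrm{Gal}(\mathbb{E}_K^{\mathrm{sep}}/\mathbb{E}_K)$, a deep theorem resting on the ramification theory of $K_\infty/K$; all subsequent steps are essentially formal once this input is accepted, though care is required to propagate the continuity conditions (with respect to the weak topologies on $\mathbb{A}$ and $\mathbb{B}$) through the mod-$p^n$ reductions and the inverse limit.
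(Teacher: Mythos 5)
The paper does not prove this statement: it is quoted verbatim as Fontaine's theorem with the citation \cite{FO90}, so there is no in-paper argument to compare yours against. Your outline is the standard proof of that theorem (field of norms, Hilbert 90 in characteristic $p$, d\'{e}vissage, limits) and is essentially sound, with two points worth tightening. First, $\mathbb{A}/p\mathbb{A}$ is the \emph{separable} closure $\mathbb{E}^{\mathrm{sep}}$ of $\mathbb{E}_{\mathbb{Q}_p}\cong\mathbb{F}_p((\bar{\pi}))$ inside the perfect field $\mathrm{Fr}\,R$; it is separably closed but not perfect, so you should invoke the Artin--Schreier/Hilbert~90 argument over a separably closed field (which suffices precisely because the $\varphi$-module is \'{e}tale) rather than Lang--Steinberg, which wants an algebraically closed base. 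Second, in deducing (i) from (ii) you assert that an \'{e}tale $(\varphi,\Gamma)$-module over $\mathbb{B}_{K,E}$ contains one over $\mathbb{A}_{K,E}$ "built into the definition"; the definition only hands you a $\varphi$-stable \'{e}tale lattice, and you must still saturate it under the compact group $\Gamma_K$ (using continuity of the action to see that $\sum_{\gamma}\gamma(D_0)$ is again a finitely generated \'{e}tale lattice) before Fontaine's functor $\mathbb{T}$ applies. With those repairs, and the continuity bookkeeping you already flag, the sketch is a faithful reconstruction of Fontaine's argument; it is of course far more than the paper itself supplies, since the paper simply imports the result as a black box.
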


Let $\mathbb{A}_{K}=\{\tsum_{-\infty }^{+\infty }\alpha _{n}\pi ^{n}:\alpha
_{n}\in \mathcal{O}_{K}\ \mathrm{and}\underset{n\rightarrow -\infty }{\lim }%
\alpha _{n}=0\}$ for some element $\pi \ $which can be thought of as a
formal variable. The ring $\mathbb{A}_{K}$ is equipped with a Frobenius
endomorphism $\varphi $ which extends the absolute Frobenius of $\mathcal{O}%
_{K}$ and is such that $\varphi (\pi )=(1+\pi )^{p}-1.$ It is also equipped
with a $\Gamma _{K}$-action which is $\mathcal{O}_{K}$-linear, commutes with
Frobenius, and is such that $\gamma (\pi )=(1+\pi )^{\chi (\gamma )}-1$ for
all $\gamma \in \Gamma _{K}.\ $The ring $\mathbb{A}_{K}$ is a local domain
with maximal ideal $(p)$ and fraction field $\mathbb{B}_{K}=\mathbb{A}_{K}[%
\frac{1}{p}].$ The rings $\mathbb{A}_{K},\ \mathbb{A}_{K,E},\ \mathbb{B}_{K}$
and $\mathbb{B}_{K,E}$ contain the subrings $\mathbb{A}_{K}^{+}=\mathcal{O}%
_{K}[[\pi ]]$\noindent $,\ \mathbb{A}_{K,E}^{+}:=$ $\mathcal{O}_{E}\otimes _{%
\mathbb{Z}_{p}}\mathbb{A}_{K}^{+},\ \mathbb{B}_{K}^{+}=$ $\mathbb{A}_{K}^{+}[%
\frac{1}{p}]\ $and $\mathbb{B}_{K,E}^{+}:=$ $E\otimes _{%
%TCIMACRO{\U{211a} }%
%BeginExpansion
\mathbb{Q}
%EndExpansion
_{p}}\mathbb{B}_{K}^{+}$ respectively, and these subrings are equipped with
the restrictions of the $\varphi $ and the $\Gamma _{K}$-actions of the
rings containing them. The map $\upsilon :\mathbb{A}_{K,E}^{+}\rightarrow
\tprod\nolimits_{\tau :K\hookrightarrow E}\mathcal{O}_{E}[[\pi ]]$ given by $%
\upsilon \left( a\otimes b\right) =\left( a\tau _{0}\left( b\right) ,a\tau
_{1}\left( b\right) ,...,a\tau _{f-1}\left( b\right) \right) ,$ where $\tau
_{i}\left( \tsum\nolimits_{n=0}^{\infty }\beta _{n}\pi ^{n}\right)
=\tsum\nolimits_{n=0}^{\infty }\tau _{i}\left( \beta _{n}\right) \pi ^{n}$
for all $b=\tsum\nolimits_{n=0}^{\infty }\beta _{n}\pi ^{n}\in \mathbb{A}%
_{K}^{+}$ is a ring isomorphism. The ring\ $\mathcal{O}_{E}[[\pi ]]^{\mid
\tau \mid }:=\tprod\nolimits_{\tau :K\hookrightarrow E}\mathcal{O}_{E}[[\pi
]]$ is equipped via $\upsilon $ with commuting $\mathcal{O}_{E}$-linear
actions of $\varphi $ and $\Gamma _{K}$ given by the formulas$\ \ \ \ \ \ \
\ \ \ \ \ \ $%
\begin{align}
& \varphi (\alpha _{0}(\pi ),\alpha _{1}(\pi ),...,\alpha _{f-1}(\pi
))=(\alpha _{1}(\varphi (\pi )),...,\alpha _{f-1}(\varphi (\pi )),\alpha
_{0}(\varphi (\pi )))\noindent \ \mathrm{and}  \label{actions1} \\
& \gamma (\alpha _{0}(\pi ),\alpha _{1}(\pi ),...,\alpha _{f-1}(\pi
))=(\alpha _{0}(\gamma \pi ),\alpha _{1}(\gamma \pi ),...,\alpha
_{f-1}(\gamma \pi ))  \label{actions2}
\end{align}%
for all $\gamma \in \Gamma _{K}.$

\begin{definition}
Suppose $k\geq 0.$\ A Wach module over $\mathbb{A}_{K,E}^{+}$\ (resp. $%
\mathbb{B}_{K,E}^{+}$) with weights in $[-k;\ 0]$\ is a free $\mathbb{A}%
_{K,E}^{+}$-module (resp. $\mathbb{B}_{K,E}^{+}$-module) $N$\ of finite
rank, endowed with an action of $\Gamma _{K}$\ which becomes trivial modulo $%
\pi $, and also with a Frobenius map $\varphi $\ which commutes with the
action of $\Gamma _{K}$\ and such that $\varphi (N)\subset N$\ and $%
N/\varphi ^{\ast }(N)$\ is killed by $q^{k},\ $where $q:=\varphi (\pi )/\pi $
and $\varphi ^{\ast }(N)$ is the $\mathbb{A}_{K,E}^{+}$-module (resp. $%
\mathbb{B}_{K,E}^{+}$-module) generated by the set $\varphi \left( N\right)
. $
\end{definition}

The following theorem of Berger determines which types of \'{e}tale $%
(\varphi ,\Gamma )$-modules correspond to crystalline representations via
Fontaine's functor.

\begin{theorem}
\cite{Ber03} \label{berger thm}

\begin{enumerate}
\item[(i)] An $E$-linear representation $V$ of $G_{K}$ is crystalline with
Hodge-Tate weights in $[-k;\ 0]$ if and only if $\mathbb{D}(V)$ contains a
unique Wach module $\mathbb{N}(V)$ of rank $\dim _{E}V$ with weights in $%
[-k;\ 0].$ The functor $V\mapsto \mathbb{N}(V)$ defines an equivalence of
categories between crystalline representations of $G_{K}$ and Wach modules
over $\mathbb{B}_{K,E}^{+},$ compatible with tensor products, duality and
exact sequences. \noindent

\item[(ii)] For a given crystalline $E$-representation $V,$ the map $\mathrm{%
T}\mapsto \mathbb{N}(\mathrm{T}):=\mathbb{N}(V)\cap \mathbb{D}(\mathrm{T})$
induces a bijection between $G_{K}$-stable, $\mathcal{O}_{E}$-lattices of $V$
and Wach modules over $\mathbb{A}_{K,E}^{+}$ which are $\mathbb{A}_{K,E}^{+}$%
-lattices contained in $\mathbb{N}(V).$ Moreover $\mathbb{D}(\mathrm{T})=%
\mathbb{A}_{K,E}\otimes _{\mathbb{A}_{K,E}^{+}}\mathbb{N}(\mathrm{T})$.

\item[(iii)] If $V$ is a crystalline $E$-representation of $G_{K},$ and if
we endow $\mathbb{N}(V)$ with the filtration $\mathrm{Fil}^{\mathrm{j}}%
\mathbb{N}(V)=\{x\in \mathbb{N}(V)|\varphi (x)\in q^{j}\mathbb{N}(V)\},$
then we have an isomorphism 
\begin{equation*}
\mathbb{D}_{\mathrm{cris}}(V)\rightarrow E^{\mid \tau \mid }\otimes _{%
\mathcal{O}_{E}}\mathbb{N}(V)/\pi \mathbb{N}(V)
\end{equation*}%
of filtered $\varphi $-modules over $E^{\mid \tau \mid }$ (with the induced
filtration on $\mathbb{N}(V)/\pi \mathbb{N}(V)$).
\end{enumerate}
\end{theorem}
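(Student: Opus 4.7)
The plan is to prove all three parts by constructing, for every pair $(\vec{\alpha},A)$, a Wach module $\mathbb{N}_A(\vec{\alpha})$ over $\mathbb{A}_{K_f,E}^+$ of rank $2$ whose reduction modulo $\pi$ recovers the filtered $\varphi$-module $\mathbb{D}_A(\vec{\alpha})$, and then using these Wach modules as a bridge between the filtered $\varphi$-module side (where the claims are stated) and the Galois side (where the claims are to be proved), via Fontaine's equivalence (Theorem~\ref{fontaine|s equivalence}) and Berger's theorem (Theorem~\ref{berger thm}).

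For part (i), I would build $\mathbb{N}_A(\vec{\alpha})$ by an iterative procedure: start with a basis in which the matrix of $\varphi$ is $P_A(\vec{\alpha})$ and successively modify the entries by elements of $\pi\mathbb{A}_{K_f,E}^+$ so as to render the $\Gamma_{K_f}$-action well defined, commuting with $\varphi$, and trivial modulo $\pi$. The divisibility hypothesis $A\in M_2(p^{\alpha(k-1)}\mathcal{O}_E)^{|\tau|}$ together with $\vec{\alpha}\in (p^{m}\mathfrak{m}_E)^f$ controls the convergence, exactly as in the constructions of \cite{BLZ04} and \cite{DO10b}. Once the Wach module exists, Theorem~\ref{berger thm}(iii) identifies $\mathbb{N}_A(\vec{\alpha})/\pi\mathbb{N}_A(\vec{\alpha})$ with $\mathbb{D}_A(\vec{\alpha})$ as filtered $\varphi$-modules; weak admissibility is then automatic from the Colmez--Fontaine theorem, and the Hodge--Tate type is read off from the filtration, which is the one prescribed for $\mathbb{D}_A(\vec{\alpha})$ and is independent of $A$.

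For part (ii), I would fix $A$ and show that the two Wach modules $\mathbb{N}_A(\vec{\alpha})$ and $\mathbb{N}_A(\vec{0})$ become isomorphic as $(\varphi,\Gamma_{K_f})$-modules over $\mathbb{A}_{K_f,E}^+/p$. Following the strategy of \cite[\S10.3]{BB04}, I would construct a change-of-basis matrix $M(\vec{\alpha})\in \mathrm{GL}_2(\mathbb{A}_{K_f,E}^+)$, built iteratively term by term, such that
\[
M(\vec{\alpha})^{-1}\,\Pi_A(\vec{\alpha})\,\varphi\bigl(M(\vec{\alpha})\bigr)\equiv \Pi_A(\vec{0})\pmod{p},
\]
where $\Pi_A(\vec{\alpha})$ denotes the matrix of $\varphi$ on $\mathbb{N}_A(\vec{\alpha})$. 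The divisibility $\alpha_i\in p^m\mathfrak{m}_E$ is precisely what makes the iteration converge and stay in the correct congruence class, with $m$ as defined in (\ref{m}). Extending scalars to $\mathbb{A}_{K_f,E}$ and invoking Theorem~\ref{fontaine|s equivalence}(ii), this furnishes $G_{K_f}$-stable $\mathcal{O}_E$-lattices $\mathrm{T}_A(\vec{\alpha})$ and $\mathrm{T}_A(\vec{0})$ with isomorphic reductions, proving $\overline{V}_A(\vec{\alpha})=\overline{V}_A(\vec{0})$.

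For part (iii), after (ii) it remains only to show that $\overline{V}_A(\vec{0})=\overline{V}_0(\vec{0})=\overline{V}$ for every $A\in\mathcal{M}(k)=M_2(p^{1+\alpha(k-1)}\mathcal{O}_E)^{|\tau|}$. The extra factor of $p$ in the definition of $\mathcal{M}(k)$ (compared to the bound $p^{\alpha(k-1)}$ tolerated in (i)) is placed exactly so that $(\mathrm{Id}+A)$ becomes congruent to the identity modulo $p$ after an analogous change-of-basis matrix $N_A\in\mathrm{GL}_2(\mathbb{A}_{K_f,E}^+)$ with
\[
N_A^{-1}\,\Pi_A(\vec{0})\,\varphi(N_A)\equiv \Pi_0(\vec{0})\pmod{p},
\]
can be produced by the same iterative procedure. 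Composing the isomorphisms from (ii) and from this step and applying Fontaine's equivalence gives the common reduction $\overline{\mathcal{G}(V)}=\overline{V}$. The main obstacle, and the quantitative heart of the argument, is the simultaneous construction of the Wach modules and of the change-of-basis matrices $M(\vec{\alpha})$ and $N_A$: one must solve the commutation equations $\gamma\cdot \Pi=\Pi\cdot \varphi(\gamma)$ for $\gamma\in\Gamma_{K_f}$ in such a way that the corrections are divisible by appropriate powers of $p$, and this is precisely where the specific exponent $\alpha(k-1)=\sum_{n\ge 0}\lfloor (k-1)/(p^n(p-1))\rfloor$ and the integer $m$ enter, both chosen so that the corresponding operators (involving $q^k/\pi^k$ and $\gamma(\pi)/\pi$) can be inverted with controlled loss of $p$-adic precision.
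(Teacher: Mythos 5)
Your proposal does not address the statement you were asked to prove. The statement is Berger's theorem (Theorem \ref{berger thm}, cited from \cite{Ber03}): the equivalence of categories between crystalline $E$-representations of $G_K$ with Hodge--Tate weights in $[-k;\,0]$ and Wach modules over $\mathbb{B}_{K,E}^{+}$, the lattice correspondence $\mathrm{T}\mapsto\mathbb{N}(\mathrm{T})$, and the identification $\mathbb{D}_{\mathrm{cris}}(V)\cong\mathbb{N}(V)/\pi\mathbb{N}(V)$ as filtered $\varphi$-modules. What you have written instead is an outline of the paper's Theorem A --- the construction of the families $\mathbb{N}_A(\vec{\alpha})$, their weak admissibility and Hodge--Tate types, and the constancy of the reductions $\overline{V}_A(\vec{\alpha})$. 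These are consequences that the paper derives \emph{from} Theorem \ref{berger thm}, not a proof of it. Indeed your argument explicitly invokes ``Theorem~\ref{berger thm}(iii)'' and Fontaine's equivalence as inputs, so as a proof of Theorem \ref{berger thm} it would be circular.

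A genuine proof of the statement lies entirely outside the scope of the constructions in your proposal: it requires the structure theory of the period rings ($\mathbb{B}$, $\mathbb{B}^{+}_{\mathrm{rig},K}$, overconvergence of $(\varphi,\Gamma)$-modules \`a la Cherbonnier--Colmez), the comparison between $\mathbb{D}_{\mathrm{cris}}(V)$ and $\mathbb{N}(V)[1/\pi]$ after extension of scalars, and a uniqueness argument for the Wach lattice --- none of which is touched by the iterative solution of the commutation equations $\Pi\,\varphi(G_\gamma)=G_\gamma\,\gamma(\Pi)$ that you describe. The paper itself treats Theorem \ref{berger thm} as a black box imported from \cite{Ber03}, which is the appropriate stance here; if you are asked to supply its proof, you must reproduce (or reference in detail) Berger's argument, not the downstream application.
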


\subsection{Construction of families of Wach modules\label{newfromold}}

We fix a topological generator $\delta $ of the procyclic group $\Gamma
_{K}. $ For any positive integer $\ell ,$ let $\alpha \left( \ell \right)
:=\tsum\nolimits_{j=1}^{\ell }\mathrm{v}_{\mathrm{p}}(1-\chi \left( \delta
\right) )^{j}$ and let $\alpha \left( 0\right) =0.$ Recall that $\alpha
\left( \ell \right) =0\ $for $\ell \leq p-2,$ while for an arbitrary $\ell ,$
$\alpha \left( \ell \right) =\tsum\nolimits_{n=0}^{\infty }\lfloor \frac{%
\ell }{p^{n}\left( p-1\right) }\rfloor \leq \lfloor \frac{\ell p}{\left(
p-1\right) ^{2}}\rfloor $ (cf. \cite[\S IV.1]{Ber03}). Let $\mathcal{S}$ be
a set of indeterminates and let $\pi $ be a distinguished indeterminate not
belonging to $\mathcal{S}.$ We denote$\ $by$\ M_{n}^{\mathcal{S}}\ $the
matrix ring $M_{n}(\mathcal{O}_{E}[[\pi ,\mathcal{S}]])^{\mid \tau \mid }.$
Recall that $k:=\max \{k_{i}\}\geq p.\ $For any integer $s\geq 0$\ we\ write 
$\vec{\pi}^{s}=\left( \pi ^{s},\pi ^{s},...,\pi ^{s}\right) ,$ and we denote
by $\overrightarrow{Id}$ the matrix of $M_{n}^{\mathcal{S}}$ whose
coordinates are the identity matrix. We need the following variant of (\cite[%
Lemma 10.3.2]{BB04}).

\begin{lemma}
\label{perturb lemma}For each $\gamma \in \Gamma _{K},$ let $G_{\gamma
}=G_{\gamma }\left( \mathcal{S}\right) \in \overrightarrow{Id}+\vec{\pi}%
M_{n}^{\mathcal{S}}.$ Let $c\geq 0$ be an integer and let $A=\left(
A_{1},A_{2},...,A_{f}\right) \ $be any matrix in $M_{n}\left( p^{c+\alpha
\left( k-1\right) }\mathcal{O}_{E}\right) ^{\mid \tau \mid }.$ There exists
a matrix%
\begin{equation*}
\hat{A}=\left( \hat{A}_{1},\hat{A}_{2},...,\hat{A}_{f}\right) \noindent \in
M_{n}\left( p^{c}\mathcal{O}_{E}[[\pi ,\mathcal{S}]]\right) ^{\mid \tau \mid
}
\end{equation*}%
such that:

\begin{enumerate}
\item[(i)] $\hat{A}\equiv A\func{mod}\ \vec{\pi};$

\item[(ii)] $\overrightarrow{Id}+\hat{A}\in \mathrm{GL}_{n}\left( \mathcal{O}%
_{E}[[\pi ,\mathcal{S}]]\right) ^{\mid \tau \mid };$

\item[(iii)] $\left( \overrightarrow{Id}+\hat{A}\right) \cdot G_{\gamma
}\cdot \gamma \left( \overrightarrow{Id}+\hat{A}\right) ^{-1}\equiv
G_{\gamma }\func{mod}\ \vec{\pi}^{k};$

\item[(iv)] If $A\in M_{n}\left( p^{1+\alpha \left( k-1\right) }\mathcal{O}%
_{E}\right) ^{\mid \tau \mid },$ then \noindent $\hat{A}\equiv 0\func{mod}\
p $ and 
\begin{equation*}
G_{\gamma }-\left( \overrightarrow{Id}+\hat{A}\right) \cdot G_{\gamma }\cdot
\gamma \left( \overrightarrow{Id}+\hat{A}\right) ^{-1}\noindent \equiv 0%
\func{mod}\ p.
\end{equation*}
\end{enumerate}
\end{lemma}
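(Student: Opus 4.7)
The plan is to construct $\hat{A}$ by successive $\vec{\pi}$-adic approximation, in the spirit of \cite[Lemme~10.3.2]{BB04}. I would first reduce (iii) to the case of the fixed topological generator $\delta$ of $\Gamma_K$: since the family $\{G_{\gamma}\}$ forms a continuous $1$-cocycle for the $\Gamma_K$-action and the ideal $\vec{\pi}^{\,k}M_n^{\mathcal{S}}$ is $\Gamma_K$-stable, the congruence at $\gamma=\delta$ propagates to every $\gamma\in\Gamma_K$ by the cocycle identity and continuity of the action.

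I would then look for $\hat{A}$ of the form
\begin{equation*}
\hat{A}\;=\;A\;+\;\sum_{j=1}^{k-1}B_j\,\vec{\pi}^{\,j},\qquad B_j\in M_n\bigl(\mathcal{O}_E[[\mathcal{S}]]\bigr)^{\mid\tau\mid},
\end{equation*}
determined inductively so that $\hat{A}^{(j)}:=A+\sum_{i=1}^{j}B_i\,\vec{\pi}^{\,i}$ satisfies
\begin{equation*}
(I\vec{d}+\hat{A}^{(j)})\,G_\delta\,\delta(I\vec{d}+\hat{A}^{(j)})^{-1}\;\equiv\;G_\delta\pmod{\vec{\pi}^{\,j+1}}.
\end{equation*}
Using $G_\delta\equiv I\vec{d}\pmod{\vec{\pi}}$, $\delta(\vec{\pi}^{\,j})\equiv\chi(\delta)^{j}\vec{\pi}^{\,j}\pmod{\vec{\pi}^{\,j+1}}$ and $\delta(B_j)=B_j$, the step-$j$ equation reduces to $(1-\chi(\delta)^{j})\,B_j\equiv -C_j\pmod{\vec{\pi}}$, where $C_j\,\vec{\pi}^{\,j}$ is the leading term of the error at stage $j-1$. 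Because $\delta$ topologically generates $\Gamma_K$, one has $\chi(\delta)^{j}\neq 1$ for every $j\geq 1$, so $B_j$ exists and is uniquely determined, with $p$-adic denominator exactly $v_p(1-\chi(\delta)^{j})$.

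The verification of (i)--(iv) then reduces to bookkeeping. Part (i) is automatic. Starting from $A\in M_n(p^{c+\alpha(k-1)}\mathcal{O}_E)^{\mid\tau\mid}$ and losing $v_p(1-\chi(\delta)^{j})$ at step $j$, the total loss across $j=1,\ldots,k-1$ equals $\alpha(k-1)$ by definition, so each $B_j$, and hence $\hat{A}$, has entries in $p^{c}\mathcal{O}_E[[\pi,\mathcal{S}]]$. Since $k\geq p$ forces $\alpha(k-1)\geq 1$, the constant term $A$ already lies in $pM_n$, so $\hat{A}$ is contained in the maximal ideal $(\mathfrak{m}_E,\pi,\mathcal{S})$ of the local ring $\mathcal{O}_E[[\pi,\mathcal{S}]]$, and $I\vec{d}+\hat{A}$ is invertible by the geometric series, giving (ii). Part (iii) is the defining relation at $\delta$, extended to all $\gamma$ by cocycle propagation. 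For (iv), the case $c=1$ places $\hat{A}$ in $pM_n(\mathcal{O}_E[[\pi,\mathcal{S}]])^{\mid\tau\mid}$; both $I\vec{d}+\hat{A}$ and $\delta(I\vec{d}+\hat{A})^{-1}$ then reduce to $I\vec{d}$ modulo $p$, so the conjugation is trivial modulo $p$.

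The main obstacle is the joint bookkeeping of $\vec{\pi}$-order and $p$-adic valuation throughout the induction: at stage $j$ one must strengthen the inductive hypothesis to show that the error $C_j$ inherits $p$-adic valuation at least $c+\alpha(k-1)-\sum_{i<j}v_p(1-\chi(\delta)^{i})$, so the $\alpha(k-1)$ budget is never exceeded over the $k-1$ steps. Since all operations in the conjugation are integral on $M_n(\mathcal{O}_E[[\pi,\mathcal{S}]])^{\mid\tau\mid}$ and only the factor $(1-\chi(\delta)^{j})^{-1}$ introduces $p$-denominators, this estimate goes through by a direct induction, mirroring the corresponding argument in \cite[\S 10]{BB04}.
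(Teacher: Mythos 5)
Your proposal follows essentially the same route as the paper's proof: write $\hat{A}$ as a degree-$(k-1)$ polynomial in $\pi$ over $M_n(\mathcal{O}_E[[\mathcal{S}]])^{\mid\tau\mid}$, reduce to a topological generator $\delta$, solve for the coefficients inductively by inverting the scalars $1-\chi(\delta)^j$, and track the $p$-adic budget $\alpha(k-1)=\sum_{j=1}^{k-1}\mathrm{v}_p(1-\chi(\delta)^j)$ to land in $p^c\mathcal{O}_E[[\pi,\mathcal{S}]]$, with invertibility from $k\geq p$ and part (iv) from $\hat{A}\equiv 0\pmod p$. The bookkeeping and the decomposition are the same as the paper's, so there is nothing substantively different to compare.
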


\begin{proof}
Since $k\geq p,\ $for any $A\in M_{n}\left( p^{c+\alpha \left( k-1\right) }%
\mathcal{O}_{E}\right) ^{\mid \tau \mid }$ it follows that $\overrightarrow{%
Id}+A\in \mathrm{GL}_{n}\left( \mathcal{O}_{E}\right) ^{\mid \tau \mid }.$
Let $\hat{A}_{i}=A_{i}+\pi \hat{A}_{i}^{1}+\pi ^{2}\hat{A}_{i}^{2}+\cdots
+\pi ^{k-1}\hat{A}_{i}^{k-1}$ and $G_{\gamma }=\left( G_{\gamma
}^{1},G_{\gamma }^{2},...,G_{\gamma }^{f}\right) ,$ with $G_{\gamma
}^{i}=Id+\pi G_{1}^{i}+\pi ^{2}G_{2}^{i}+\cdots +\pi
^{k-1}G_{k-1}^{i}+\cdots $ (suppressing the dependence of the $G_{j}^{i}\ $%
on $\gamma ).$ We first show that $\overrightarrow{Id}+\hat{A}\in \mathrm{GL}%
_{n}\left( \mathcal{O}_{E}[[\pi ,\mathcal{S}]]\right) ^{\mid \tau \mid }.$
Since $Id+A_{i}\in I_{n}+{M}_{n}\left( p^{c+\alpha \left( k-1\right) }%
\mathcal{O}_{E}\right) \ $for all $i,$ each coordinate matrix $Id+\hat{A}%
_{i} $ is invertible and the inverse is $\left( Id+\hat{A}_{i}\right)
^{-1}=\left( \tsum\nolimits_{n=0}^{\infty }\left( -1\right) ^{n}\pi
^{n}B_{i}^{n}\right) \left( Id+A_{i}\right) ^{-1},$ where $B_{i}=\left(
Id+A_{i}\right) ^{-1}\left( \hat{A}_{i}^{1}+\pi \hat{A}_{i}^{2}+\cdots +\pi
^{k-2}\hat{A}_{i}^{k-1}\right) .$ To prove part (iii), we need to choose the
matrices $\hat{A}_{i}^{j}\ $so that 
\begin{align*}
\left( Id+\pi G_{1}^{i}+\pi ^{2}G_{2}^{i}+\cdots +\pi
^{k-1}G_{k-1}^{i}+\cdots \right) & \gamma \left( A_{i}+\pi \hat{A}%
_{i}^{1}+\pi ^{2}\hat{A}_{i}^{2}+\cdots +\pi ^{k-1}\hat{A}_{i}^{k-1}\right) =
\\
\left( A_{i}+\pi \hat{A}_{i}^{1}+\pi ^{2}\hat{A}_{i}^{2}+\cdots +\pi ^{k-1}%
\hat{A}_{i}^{k-1}\right) & \left( Id+\pi G_{1}^{i}+\pi ^{2}G_{2}^{i}+\cdots
+\pi ^{k-1}G_{k-1}^{i}+\cdots \right) \func{mod}\ \pi ^{k}.
\end{align*}%
We may assume that $\gamma $ is a topological generator of $\Gamma _{K}.$ We
solve for the $\hat{A}_{i}^{j},$ bearing in mind that $\gamma \left( \pi
\right) ^{r}\equiv \chi \left( \gamma \right) ^{r}\pi ^{r}\func{mod}\ \pi
^{r+1}$ for all $r\geq 1.$ First, we solve for $\hat{A}_{i}^{1}\in
M_{n}\left( \mathcal{O}_{E}[[\mathcal{S}]]\right) $ so that $\left( 1-\chi
\left( \gamma \right) \right) \hat{A}_{i}^{1}=A_{i}G_{1}^{i}-G_{1}^{i}A_{i}.$
Since $A_{i}G_{1}^{i}-G_{1}^{i}A_{i}\in M_{n}\left( p^{c+\alpha \left(
k-1\right) }\mathcal{O}_{E}[[\mathcal{S}]]\right) ,$ we see that $\hat{A}%
_{i}^{1}:=\left( 1-\chi \left( \gamma \right) \right) ^{-1}\left(
A_{i}G_{1}^{i}-G_{1}^{i}A_{i}\right) $ and $\hat{A}_{i}^{1}\in M_{n}\left(
p^{c+\alpha \left( k-1\right) -\mathrm{v}_{\mathrm{p}}\left( 1-\chi \left(
\gamma \right) \right) }\mathcal{O}_{E}[[\mathcal{S}]]\right) .$ We then
solve for $\hat{A}_{i}^{2}$ so that $\left( \left( 1-\chi \left( \gamma
\right) ^{2}\right) \right) \hat{A}_{i}^{2}$ is an $\mathcal{O}_{E}$-linear
combination of products of $A_{i},\hat{A}_{i}^{1},G_{1}^{i},G_{2}^{i}$ which
belong to $M_{n}\left( p^{c+\alpha \left( k-1\right) -\mathrm{v}_{\mathrm{p}%
}\left( 1-\chi \left( \gamma \right) \right) }\mathcal{O}_{E}[[\mathcal{S}%
]]\right) .$ Dividing this linear combination by $\left( 1-\chi \left(
\gamma \right) ^{2}\right) ,$ we get%
\begin{equation*}
\hat{A}_{i}^{2}\in M_{n}\left( p^{c+\alpha \left( k-1\right) -\mathrm{v}_{%
\mathrm{p}}\left( \left( 1-\chi \left( \gamma \right) \right) \left( 1-\chi
\left( \gamma \right) ^{2}\right) \right) }\mathcal{O}_{E}[[\mathcal{S}%
]]\right) .
\end{equation*}%
Continuing this way we solve for $\hat{A}_{i}^{k-1}\in M_{n}\left( p^{c}%
\mathcal{O}_{E}[[\mathcal{S}]]\right) .$ Part (iv) is clear.
\end{proof}

\noindent Let $c_{i}\in \mathcal{O}_{E}^{\times }\ $and let $\Pi \left( 
\mathcal{S}\right) =\left( \Pi _{1},\Pi _{2},...,\Pi _{f}\right) \in M_{n}^{%
\mathcal{S}}$\ with $\det \left( \Pi _{i}\right) =c_{i}q^{k_{i}},\ $where $q=%
\frac{\left( 1+\pi \right) ^{p}-1}{\pi }.$\ We denote by$\ I$\ the ideal of $%
M_{n}\left( \mathcal{O}_{E}[[\mathcal{S}]]\right) $\ generated by the set $%
\{p\cdot Id,\ X_{i}\cdot Id:\ X_{i}\in \mathcal{S}\}\ $and by $\overline{%
M_{n}}$\ the quotient ring of $M_{n}\left( \mathcal{O}_{E}[[\mathcal{S}%
]]\right) \ $modulo $I.$ We also denote $P_{i}:=\Pi _{i}\func{mod}$ $I$ for
all $i=0,1,...,f-1.$\ Letting $\varphi $\ act trivially on the elements of $%
\mathcal{S},\mathcal{\ }$and letting $\varphi \left( \pi \right) =\left(
1+\pi \right) ^{p}-1,$\ we have%
\begin{equation}
\varphi \left( \vec{\alpha}\right) =(\varphi \left( \alpha _{1}\right)
,\varphi \left( \alpha _{2}\right) ,...,\varphi (\alpha _{0}))\noindent
\label{kellaki}
\end{equation}%
for all $\vec{\alpha}=\left( \alpha _{0},\alpha _{1},...,\alpha
_{f-1})\right) \in \mathcal{O}_{E}[[\pi ,\mathcal{S}]]^{\mid \tau \mid }.$\
We denote $\mathrm{Nm}_{\varphi }\left( \vec{\alpha}\right) :=\vec{\alpha}%
\cdot \varphi \left( \vec{\alpha}\right) \cdot \cdots \cdot \varphi
^{f-1}\left( \vec{\alpha}\right) $\ and for any matrix $A$\ in $M_{n}^{%
\mathcal{S}}$\ we write $\mathrm{Nm}_{\varphi }\left( A\right) :=A\cdot
\varphi \left( A\right) \cdot \cdots \cdot \varphi ^{f-1}\left( A\right) ,$\
with $\varphi $\ acting on each entry of the matrix $A$\ as in formula ($\ref%
{kellaki}).$ We fix a matrix $\Pi \left( \mathcal{S}\right) \in M_{n}^{%
\mathcal{S}}$ as above. For the rest of this section we assume that for any $%
\gamma \in \Gamma _{K}$ there exists a matrix $G_{\gamma }^{(k)}=G_{\gamma
}^{(k)}(\mathcal{S})\in M_{n}^{\mathcal{S}}$ such that:

(a)\noindent $\ G_{\gamma }^{(k)}(\mathcal{S})\equiv \overrightarrow{Id}%
\func{mod}\ \vec{\pi};$

(b)$\ G_{\gamma }^{(k)}(\mathcal{S})-\Pi (\mathcal{S})\mathcal{\varphi }%
(G_{\gamma }^{(k)}(\mathcal{S}))\gamma (\Pi (\mathcal{S})^{-1})\in \vec{\pi}%
^{k}M_{n}^{\mathcal{S}};$

(c)\ There exist no nonzero matrix $B\in M_{n}(\mathcal{O}_{E}[[\mathcal{S}%
]])^{\mid \tau \mid }$ and integer $t>0$ such that $BU=p^{ft}UB,$ \noindent\ 

\indent where $U=\mathrm{Nm}_{\varphi }\left( \Pi \left( \mathcal{S}\right)
\right) ;$

(d) If $k=k_{i}$ for all $i,$ we additionally assume that the operator%
\begin{equation}
\overline{H}\mapsto \overline{H-Q_{f}H(p^{fk}Q_{f}^{-1})}:\overline{M_{n}}%
\rightarrow \overline{M_{n}},  \label{tinasepwesena}
\end{equation}%
where $Q_{f}=P_{1}P_{2}\cdots P_{f-1}P_{0}\ $is surjective. Let $R_{\gamma
}^{\left( k\right) }\left( \mathcal{S}\right) $ be the matrices defined by%
\begin{equation}
\vec{\pi}^{k}R_{\gamma }^{\left( k\right) }\left( \mathcal{S}\right)
:=G_{\gamma }^{\left( k\right) }\left( \mathcal{S}\right) -\Pi \left( 
\mathcal{S}\right) \varphi \left( G_{\gamma }^{\left( k\right) }\left( 
\mathcal{S}\right) \right) \gamma \left( \Pi \left( \mathcal{S}\right)
^{-1}\right) .  \label{kati}
\end{equation}

\begin{proposition}
\label{perturb cor}Let $A=\left( A_{1},A_{2},...,A_{f}\right) \in
M_{n}\left( p^{\alpha \left( k-1\right) }\mathcal{O}_{E}\right) ^{\mid \tau
\mid }$ and let$\ Q_{f}^{A}:=\tprod\limits_{i=1}^{f}\left( Id+A_{i}\right)
P_{i},$ where $P_{i}=\Pi _{i}\func{mod}\ \pi $ for all $i.$ Assume that

\begin{enumerate}
\item[(1)] There exist no nonzero matrix $B\in M_{n}(\mathcal{O}_{E}[[%
\mathcal{S}]])^{\mid \tau \mid }$ and integer $t>0$ such that $%
BU_{A}=p^{ft}U_{A}B,$ where $U_{A}=\mathrm{Nm}_{\varphi }\left( \left( 
\overrightarrow{Id}+A\right) P\left( \mathcal{S}\right) \right) ,$ with $%
P\left( \mathcal{S}\right) =\Pi \left( \mathcal{S}\right) \func{mod}$ $I.$
\end{enumerate}

$~$If $n=2,$ we replace assumption $(1)$ by the following assumption.

\begin{enumerate}
\item[(\'{1})] $\mathrm{Tr}\left( Q_{f}^{A}\right) \not\in \overline{%
%TCIMACRO{\U{211a} }%
%BeginExpansion
\mathbb{Q}
%EndExpansion
_{p}}.$
\end{enumerate}

\noindent Let $\hat{A}\in M_{n}\left( \mathcal{O}_{E}[[\pi ,\mathcal{S}%
]]\right) ^{\mid \tau \mid }$ be as in Lemma $\ref{perturb lemma}\ $applied
for the matrices $G_{\gamma }\left( \mathcal{S}\right) :=G_{\gamma
}^{(k)}\left( \mathcal{S}\right) ,\ $where the $G_{\gamma }^{(k)}\left( 
\mathcal{S}\right) \ $are as in the assumptions preceding this proposition,$%
\ $and let $\Pi _{^{\hat{A}}}\left( \mathcal{S}\right) :=\left( 
\overrightarrow{Id}+\hat{A}\right) \Pi \left( \mathcal{S}\right) .$ Then for
each $\gamma \in \Gamma _{K}$ there exists a unique matrix $G_{\gamma ,\hat{A%
}}(\mathcal{S})\in M_{n}^{\mathcal{S}}\ $such that \noindent

\begin{enumerate}
\item[(i)] $G_{\gamma ,\hat{A}}(\mathcal{S})\equiv \overrightarrow{Id}\func{%
mod}\ \vec{\pi}$ and

\item[(ii)] $\Pi _{\hat{A}}(\mathcal{S})\mathcal{\varphi }(G_{\gamma ,\hat{A}%
}(\mathcal{S}))=G_{\gamma ,\hat{A}}(\mathcal{S})\gamma (\Pi _{\hat{A}}(%
\mathcal{S})).$
\end{enumerate}
\end{proposition}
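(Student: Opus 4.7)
The plan is to construct $G_{\gamma,\hat{A}}$ by $\vec{\pi}$-adic successive approximation starting from $G^{(0)}:=G_{\gamma}^{(k)}$, and to establish uniqueness by reducing equation (ii) order-by-order in $\vec{\pi}$ and invoking hypothesis (1) (resp.\ (1') when $n=2$). The crux underlying both parts is that $\Pi_{\hat{A}}$ and $\gamma(\Pi_{\hat{A}})$ both reduce to $\vec{Q}:=(I\vec{d}+A)P$ modulo $\vec{\pi}$, and $\varphi(\pi)\equiv p\pi\bmod\pi^{2}$, so the leading $\vec{\pi}^{m}$-term of any equation of the form $\Pi_{\hat{A}}\varphi(X)=X\gamma(\Pi_{\hat{A}})$ reduces to $p^{m}\vec{Q}\,\varphi(\bar{X})=\bar{X}\vec{Q}$; iterating over the $f$ coordinates via $\varphi^{f}$ then produces a condition of the form $\bar{X}U_{A}=p^{fm}U_{A}\bar{X}$ componentwise, which directly engages hypothesis (1).

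For \textbf{existence}, I would first verify that $G^{(0)}=G_{\gamma}^{(k)}$ satisfies (i) (from (a)) and (ii) modulo $\vec{\pi}^{k}$; the latter follows by rearranging (b) as $\Pi\varphi(G^{(0)})\equiv G^{(0)}\gamma(\Pi)\bmod\vec{\pi}^{k}$ and applying Lemma \ref{perturb lemma}(iii) to $G_{\gamma}:=G_{\gamma}^{(k)}$ to get $(I\vec{d}+\hat{A})G^{(0)}\equiv G^{(0)}\gamma(I\vec{d}+\hat{A})\bmod\vec{\pi}^{k}$, and then chaining the two to obtain $\Pi_{\hat{A}}\varphi(G^{(0)})\equiv G^{(0)}\gamma(\Pi_{\hat{A}})\bmod\vec{\pi}^{k}$. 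Inductively, if $G^{(j)}$ satisfies (ii) modulo $\vec{\pi}^{k+j}$, I would set $G^{(j+1)}:=G^{(j)}+\vec{\pi}^{k+j}H^{(j)}$ with $H^{(j)}\in M_{n}(\mathcal{O}_{E}[[\mathcal{S}]])^{|\tau|}$ chosen to cancel the leading error; by the observation above this reduces, after $f$-fold iteration, to a linear equation $H^{(j)}_{0}U_{A}^{(0)}-p^{f(k+j)}U_{A}^{(0)}H^{(j)}_{0}=(\text{explicit RHS})$ in each coordinate. The $\vec{\pi}$-adic limit $G_{\gamma,\hat{A}}:=\lim_{j}G^{(j)}$ then lives in $M_{n}^{\mathcal{S}}$ and satisfies (i) and (ii).

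For \textbf{uniqueness}, given two candidates $G_{1},G_{2}$, set $D:=G_{1}-G_{2}$; then $D\equiv0\bmod\vec{\pi}$ and $\Pi_{\hat{A}}\varphi(D)=D\gamma(\Pi_{\hat{A}})$. If $D\neq0$ with $\vec{\pi}$-adic order $m\geq1$, its leading coefficient $\bar{E}$ would satisfy $\bar{E}U_{A}=p^{fm}U_{A}\bar{E}$ componentwise with $\bar{E}\neq0$, contradicting (1) with $t=m$. For the $n=2$ case I would derive (1) from (1'): assuming $BU_{A}=p^{ft}U_{A}B$ for some nonzero $B$ and $t>0$, iteration yields $BU_{A}^{j}=p^{jft}U_{A}^{j}B$ and hence $\mathrm{Tr}(BU_{A}^{j})=0$ for all $j\geq1$; Cayley--Hamilton on the $2\times2$ matrix $U_{A}^{(0)}$, combined with $\det(U_{A}^{(0)})\in\mathcal{O}_{E}$ (immediate from $\det(P_{i})=c_{i}p^{k_{i}}$ with $c_{i}\in\mathcal{O}_{E}^{\times}$ and $A$ being $\mathcal{S}$-independent), then forces the eigenvalues $\lambda,\mu$ of $U_{A}^{(0)}$ to satisfy $\lambda=p^{ft}\mu$, whence $\mathrm{Tr}(Q_{f}^{A})^{2}/\det(Q_{f}^{A})=(1+p^{ft})^{2}/p^{ft}\in\mathbb{Q}_{p}$, so $\mathrm{Tr}(Q_{f}^{A})\in\overline{\mathbb{Q}_{p}}$, contradicting (1').

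The main obstacle will be the solvability of the linear equation for $H^{(j)}$ at each step of the existence iteration, especially at small $j$ where $p^{f(k+j)}$ is not yet small enough to make the operator $H\mapsto HU_{A}^{(0)}-p^{f(k+j)}U_{A}^{(0)}H$ visibly invertible as a perturbation of $H\mapsto HU_{A}^{(0)}$. I would handle this via hypothesis (d) of the setup in the all-equal-weight case (surjectivity modulo $(p,\mathcal{S})$ lifts by Nakayama over the local ring $\mathcal{O}_{E}[[\mathcal{S}]]$ to surjectivity on $\mathcal{O}_{E}[[\mathcal{S}]]$), while in the general case the presence of distinct weights $k_{i}$ in the determinant of $U_{A}$ breaks the symmetry enough that the relevant operator is manifestly bijective; injectivity of this operator is in any case supplied by hypothesis (1) applied with $t=k+j$.
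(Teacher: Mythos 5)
Your proof is correct and follows the same underlying mechanism as the paper's, which verifies the seed congruence $\Pi_{\hat{A}}\varphi(G_{\gamma}^{(k)})\equiv G_{\gamma}^{(k)}\gamma(\Pi_{\hat{A}})\bmod\vec{\pi}^{k}$ exactly as you do (chaining assumption (b) with Lemma \ref{perturb lemma}(iii)) and then simply invokes \cite[Lemma 4.4]{DO10b} (the successive-approximation statement you reprove inline) together with \cite[Corollary 5.3]{DO10b} (the trace/eigenvalue reduction of ($\acute{1}$) to (1) in the $n=2$ case). The one delicate point you correctly flag --- solvability of the linear equation at the base step $s=k$ when $k_{i}=k$ for all $i$, where $p^{fk}(Q_{f}^{A})^{-1}$ is a unit and the surjectivity hypothesis (d) together with a Nakayama argument over $\mathcal{O}_{E}[[\mathcal{S}]]$ must be invoked --- is exactly the role of assumption (4) of that lemma.
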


\begin{proof}
Since $k\geq p,\ $it follows that $\det \left( Id+A_{i}\right) \in \mathcal{O%
}_{E}^{\times }\ $for all $i.$ Let $G_{\gamma ,\hat{A}}^{\left( k\right)
}\left( \mathcal{S}\right) :=G_{\gamma }^{\left( k\right) }\left( \mathcal{S}%
\right) $ for all $\gamma \in \Gamma _{K}.$ The proposition follows from 
\cite[Lemma 4.4]{DO10b}\ applied for the matrices $\Pi _{^{\hat{A}}}\left( 
\mathcal{S}\right) \ $and $G_{\gamma ,\hat{A}}^{\left( k\right) }\left( 
\mathcal{S}\right) \ $for the case where $\ell =k.$ Assumption $(1)$ of this
lemma clearly holds. If $k=k_{i}$ for all $i,$ since $k\geq p,\ $the operator%
\begin{equation}
\overline{H}\mapsto \overline{H-Q_{f}^{A}H(p^{fk}(Q_{f}^{A})^{-1})}:%
\overline{M_{n}}\rightarrow \overline{M_{n}}  \label{de mas xezeis}
\end{equation}%
coincides with the operator $(\ref{tinasepwesena})$ which was assumed to be
surjective. Hence assumption $(4)$ of \cite[Lemma 4.4]{DO10b}\ holds. If $%
n=2,$ assumption $(3)$ holds because of assumption $(\acute{1})$ and \cite[%
Corollary 5.3]{DO10b}. \noindent Lemma \ref{perturb lemma} (ii) implies\
that $Id+\hat{A}\in \mathrm{GL}_{n}\left( \mathcal{O}_{E}[[\pi ,\mathcal{S}%
]]\right) ^{\mid \tau \mid }.$ Moreover,%
\begin{eqnarray*}
&&G_{\gamma ,\hat{A}}^{\left( k\right) }\left( \mathcal{S}\right) -\Pi _{%
\hat{A}}\left( \mathcal{S}\right) \cdot \varphi \left( G_{\gamma ,\hat{A}%
}^{\left( k\right) }\left( \mathcal{S}\right) \right) \cdot \gamma \left(
\Pi _{\hat{A}}\left( \mathcal{S}\right) ^{-1}\right) = \\
&&G_{\gamma }^{\left( k\right) }\left( \mathcal{S}\right) -\left( Id+\hat{A}%
\right) \cdot \Pi \left( \mathcal{S}\right) \cdot \varphi \left( G_{\gamma
}^{\left( k\right) }\left( \mathcal{S}\right) \right) \cdot \gamma \Pi
\left( \mathcal{S}\right) ^{-1}\cdot \gamma \left( Id+\hat{A}\right) ^{-1}%
\overset{(\ref{kati})}{=} \\
&&G_{\gamma }^{\left( k\right) }\left( \mathcal{S}\right) -\left( Id+\hat{A}%
\right) \cdot G_{\gamma }^{\left( k\right) }\left( \mathcal{S}\right) \cdot
\gamma \left( Id+\hat{A}\right) ^{-1}+\vec{\pi}^{k}\cdot \left( Id+\hat{A}%
\right) \cdot R_{\gamma }^{\left( k\right) }\left( \mathcal{S}\right) \cdot
\gamma \left( Id+\hat{A}\right) ^{-1}.\ 
\end{eqnarray*}%
By Lemma \ref{perturb lemma} (iii), $G_{\gamma }^{\left( k\right) }\left( 
\mathcal{S}\right) -\left( Id+\hat{A}\right) \cdot G_{\gamma }^{\left(
k\right) }\left( \mathcal{S}\right) \cdot \gamma \left( Id+\hat{A}\right)
^{-1}\in \vec{\pi}^{k}M_{n}^{\mathcal{S}}$ and therefore assumption $(2)$ of 
\cite[Lemma 4.4]{DO10b} holds. This completes the proof.\noindent
\end{proof}

\begin{proposition}
\label{gamma acts}For any $\vec{a}=(a_{0},a_{1},...,a_{f-1})\in \mathfrak{m}%
_{E}^{\left\vert \mathcal{S}\right\vert }$ and any $\gamma _{1},\gamma
_{2},\gamma \in \Gamma _{K},$ the following equations hold:

\begin{enumerate}
\item[(i)\noindent ] $G_{\gamma _{1}\gamma _{2},\hat{A}}(\vec{a})=G_{\gamma
_{1},\hat{A}}(\vec{a})\gamma _{1}(G_{\gamma _{2},\hat{A}}(\vec{a}))$ and

\item[(ii)\noindent ] $\Pi _{\hat{A}}(\vec{a})\mathcal{\varphi }(G_{\gamma ,%
\hat{A}}(\vec{a}))=G_{\gamma ,\hat{A}}(\vec{a})\gamma (\Pi _{\hat{A}}(\vec{a}%
)).$
\end{enumerate}
\end{proposition}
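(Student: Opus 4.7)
The plan is to reduce both identities to the uniqueness statement of Proposition \ref{perturb cor} applied over the full polynomial ring, and only then specialize at $\vec{a}$. The key point is that $\varphi$ and $\Gamma_K$ both act trivially on the indeterminates of $\mathcal{S}$, so the evaluation map $\mathrm{ev}_{\vec{a}}:\mathcal{O}_{E}[[\pi,\mathcal{S}]]\to\mathcal{O}_{E}[[\pi]]$ induced by $X_i\mapsto a_i$ is equivariant for both $\varphi$ and every $\gamma\in\Gamma_K$, and therefore commutes with the matrix operations $\varphi(\cdot)$ and $\gamma(\cdot)$ componentwise.

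Statement (ii) for $\vec{a}$ is then essentially immediate: by Proposition \ref{perturb cor}(ii) the identity
\[
\Pi_{\hat{A}}(\mathcal{S})\,\varphi\bigl(G_{\gamma,\hat{A}}(\mathcal{S})\bigr)=G_{\gamma,\hat{A}}(\mathcal{S})\,\gamma\bigl(\Pi_{\hat{A}}(\mathcal{S})\bigr)
\]
holds in $M_n^{\mathcal{S}}$, and applying $\mathrm{ev}_{\vec a}$ to both sides yields (ii).

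For (i) the strategy is to prove the cocycle identity already at the level of $M_n^{\mathcal{S}}$ and then specialize. Set
\[
H(\mathcal{S}):=G_{\gamma_{1},\hat{A}}(\mathcal{S})\,\gamma_{1}\bigl(G_{\gamma_{2},\hat{A}}(\mathcal{S})\bigr).
\]
First, $H(\mathcal{S})\equiv \overrightarrow{Id}\bmod \vec{\pi}$, since each factor is $\equiv \overrightarrow{Id}\bmod \vec{\pi}$ and $\gamma_1$ preserves the ideal $(\vec{\pi})$. Second, I compute
\[
\Pi_{\hat{A}}\,\varphi(H)=\Pi_{\hat{A}}\,\varphi(G_{\gamma_{1},\hat{A}})\,\varphi\gamma_{1}(G_{\gamma_{2},\hat{A}})
=G_{\gamma_{1},\hat{A}}\,\gamma_{1}(\Pi_{\hat{A}})\,\gamma_{1}\varphi(G_{\gamma_{2},\hat{A}}),
\]
using (ii') for $\gamma_1$ and the fact that $\varphi$ and $\gamma_{1}$ commute on $\mathcal{O}_{E}[[\pi,\mathcal{S}]]^{\mid\tau\mid}$ (both fix $\mathcal{S}$, and commute on $\pi$). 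Applying (ii') for $\gamma_{2}$ inside $\gamma_1(\cdots)$ turns $\gamma_{1}(\Pi_{\hat{A}}\varphi(G_{\gamma_2,\hat{A}}))$ into $\gamma_{1}(G_{\gamma_{2},\hat{A}}\,\gamma_{2}(\Pi_{\hat{A}}))$, which gives
\[
\Pi_{\hat{A}}\,\varphi(H)=G_{\gamma_{1},\hat{A}}\,\gamma_{1}(G_{\gamma_{2},\hat{A}})\,(\gamma_{1}\gamma_{2})(\Pi_{\hat{A}})=H\cdot(\gamma_{1}\gamma_{2})(\Pi_{\hat{A}}).
\]
Thus $H(\mathcal{S})$ satisfies conditions (i) and (ii) of Proposition \ref{perturb cor} for $\gamma=\gamma_{1}\gamma_{2}$, and the uniqueness clause of that proposition forces $H(\mathcal{S})=G_{\gamma_{1}\gamma_{2},\hat{A}}(\mathcal{S})$ in $M_n^{\mathcal{S}}$. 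Applying $\mathrm{ev}_{\vec{a}}$ delivers (i).

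There is no real obstacle: once one verifies that evaluation at $\vec{a}$ commutes with $\varphi$ and every $\gamma$, both statements reduce to the uniqueness clause already proved via \cite[Lemma 4.4]{DO10b}. The only bookkeeping care needed is to confirm that the hypotheses of Proposition \ref{perturb cor} continue to apply with $\gamma$ replaced by $\gamma_1\gamma_2$, which is automatic because (a)--(d) of the setup preceding that proposition were assumed for every $\gamma\in\Gamma_K$.
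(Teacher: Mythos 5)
Your proof is correct and takes essentially the same approach as the paper: both reduce part (i) to the uniqueness clause of Proposition \ref{perturb cor} by showing $G_{\gamma_1,\hat{A}}(\mathcal{S})\,\gamma_1(G_{\gamma_2,\hat{A}}(\mathcal{S}))$ satisfies the defining equation for $\gamma_1\gamma_2$, and both get part (ii) directly from conclusion (ii) of that proposition. You merely spell out the calculation and the $\varphi,\Gamma_K$-equivariance of evaluation at $\vec{a}$ more explicitly than the paper does.
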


\begin{proof}
Both matrices $G_{\gamma _{1}\gamma _{2},\hat{A}}(\mathcal{S})$ and $%
G_{\gamma _{1},\,\hat{A}}(\mathcal{S})\gamma _{1}(G_{\gamma _{2},\hat{A}}(%
\mathcal{S}))$ are $\equiv \overrightarrow{Id}\func{mod}\ \vec{\pi}$ and are
solutions in $B$ of the equation $\Pi (\mathcal{S})\mathcal{\varphi }%
(B)=B\gamma (\Pi (\mathcal{S})).$ They are equal by the uniqueness part of
Proposition \ref{perturb cor}. The second equation follows from conclusion
(ii) of the same proposition. $~$
\end{proof}

For any $\vec{a}\in \mathfrak{m}_{E}^{\left\vert \mathcal{S}\right\vert }$
we equip$\ \mathbb{N}_{\hat{A}}\left( \vec{a}\right)
=\tbigoplus\nolimits_{i=1}^{n}\left( \mathcal{O}_{E}[[\pi ]]^{\mid \tau \mid
}\right) \eta _{i}$ with $\varphi $ and $\Gamma _{K}$-actions defined by $%
\left( \varphi \left( \eta _{1}\right) ,\varphi \left( \eta _{2}\right)
,...,\varphi \left( \eta _{n}\right) \right) =\left( \eta _{1},\eta
_{2},...,\eta _{n}\right) \Pi _{\hat{A}}(\vec{a})$ and $\left( \gamma \eta
_{1},\gamma \eta _{2},...,\gamma \eta _{n}\right) =\left( \eta _{1},\eta
_{2},...,\eta _{n}\right) G_{\gamma ,\hat{A}}(\vec{a})$ respectively.
Proposition \ref{gamma acts} implies that $(\gamma _{1}\gamma _{2})x$%
\noindent $=\gamma _{1}(\gamma _{2}x)$ and $\varphi (\gamma x)=\gamma
(\varphi (x))$ for all $x\in \mathbb{N}_{\hat{A}}(\vec{a})\ $and $\gamma
,\gamma _{1},\gamma _{2}\in \Gamma _{K}.$ Since $G_{\gamma ,\hat{A}}(\vec{a}%
)\equiv \overrightarrow{Id}\func{mod}\ \vec{\pi},$ it follows that the $%
\Gamma _{K}$ action on $\mathbb{N}_{\hat{A}}(\vec{a})\ $is trivial modulo $%
\pi \mathbb{N}_{\hat{A}}(\vec{a}).$ We have the following.

\begin{proposition}
\label{rank two wach modules construction copy(1)}For any $\vec{a}\in 
\mathfrak{m}_{E}^{\left\vert \mathcal{S}\right\vert }\ $the module $\mathbb{N%
}_{\hat{A}}(\vec{a})$ equipped with the $\varphi $ and $\Gamma _{K}$-actions
defined by $\Pi _{\hat{A}}(\vec{a})$ and $G_{\gamma ,\hat{A}}(\vec{a})$
respectively is a Wach module corresponding to some $G_{K}$-stable $\mathcal{%
O}_{E}$-lattice inside some $n$-dimensional crystalline $E$-representation
of $G_{K}$ with Hodge-Tate weights in $[-k;\ 0].$

\begin{proof}
By Theorem \ref{berger thm}, the only thing left to prove is that $q^{k}%
\mathbb{N}_{\hat{A}}(\vec{a})\boldsymbol{\subset }\varphi ^{\ast }(\mathbb{N}%
_{\hat{A}}(\vec{a})).$ This is identical to the proof of \cite[Proposition
4.6]{DO10b}.\noindent
\end{proof}
\end{proposition}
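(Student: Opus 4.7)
The plan is to verify directly that $\mathbb{N}_{\hat{A}}(\vec{a})$ satisfies the four axioms of a Wach module over $\mathbb{A}_{K,E}^{+}$ of rank $n$ with weights in $[-k;\ 0]$, after which Berger's Theorem~\ref{berger thm} will produce the desired crystalline representation together with a $G_{K}$-stable lattice in one stroke. Most of these axioms come essentially for free from the construction: the underlying module is free of rank $n$ via the ring isomorphism $\upsilon\colon\mathbb{A}_{K,E}^{+}\xrightarrow{\sim}\mathcal{O}_{E}[[\pi]]^{|\tau|}$; the commutation $\varphi\circ\gamma=\gamma\circ\varphi$ and the cocycle relation for the $\Gamma_{K}$-action are exactly the two conclusions of Proposition~\ref{gamma acts}; the congruence $G_{\gamma,\hat{A}}(\vec{a})\equiv\overrightarrow{Id}\bmod\vec{\pi}$ from Proposition~\ref{perturb cor}(i) gives triviality of the $\Gamma_{K}$-action modulo $\pi\mathbb{N}_{\hat{A}}(\vec{a})$; and $\varphi\bigl(\mathbb{N}_{\hat{A}}(\vec{a})\bigr)\subset\mathbb{N}_{\hat{A}}(\vec{a})$ is immediate from $\Pi_{\hat{A}}(\vec{a})\in M_{n}(\mathcal{O}_{E}[[\pi]])^{|\tau|}$.

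The only nontrivial remaining point, and the expected main obstacle, is to show that the cokernel $\mathbb{N}_{\hat{A}}(\vec{a})/\varphi^{\ast}\bigl(\mathbb{N}_{\hat{A}}(\vec{a})\bigr)$ is killed by $q^{k}$, equivalently that $q^{k}\Pi_{\hat{A}}(\vec{a})^{-1}$ has entries in $\mathcal{O}_{E}[[\pi]]^{|\tau|}$. The approach is to exploit the factorization $\Pi_{\hat{A}}(\vec{a})=\bigl(\overrightarrow{Id}+\hat{A}\bigr)\Pi(\vec{a})$: by Lemma~\ref{perturb lemma}(ii) the matrix $\overrightarrow{Id}+\hat{A}$ lies in $\mathrm{GL}_{n}(\mathcal{O}_{E}[[\pi,\mathcal{S}]])^{|\tau|}$, so in the $i$-th coordinate $\det\Pi_{\hat{A},i}(\vec{a})$ is a unit multiple of $\det\Pi_{i}(\vec{a})=c_{i}q^{k_{i}}$. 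Cramer's rule then yields $q^{k_{i}}\Pi_{\hat{A},i}(\vec{a})^{-1}\in M_{n}(\mathcal{O}_{E}[[\pi]])$, and since $k=\max_{i}k_{i}\geq k_{i}$ for every $i$, multiplication by $q^{k}$ sends $\mathbb{N}_{\hat{A}}(\vec{a})$ into $\varphi^{\ast}\bigl(\mathbb{N}_{\hat{A}}(\vec{a})\bigr)$, as required. This is the same coordinate-wise argument already carried out in \cite[Proposition~4.6]{DO10b}.

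With the four Wach module axioms in hand, I would conclude by invoking Theorem~\ref{berger thm}: part~(ii) attaches to $\mathbb{N}_{\hat{A}}(\vec{a})$ a $G_{K}$-stable $\mathcal{O}_{E}$-lattice $T$ inside a crystalline $E$-representation $V$ of $G_{K}$ with $\mathbb{N}(T)=\mathbb{N}_{\hat{A}}(\vec{a})$, and part~(i) (applied after extending scalars to $\mathbb{B}_{K,E}^{+}$) forces $\dim_{E}V=n$ with Hodge--Tate weights in $[-k;\ 0]$. Apart from the $q^{k}$-divisibility step, everything else is essentially formal bookkeeping built on the construction of $\Pi_{\hat{A}}(\vec{a})$ and $G_{\gamma,\hat{A}}(\vec{a})$ in the preceding propositions.
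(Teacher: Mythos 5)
Your proposal is correct and follows essentially the same route as the paper: the commutation and cocycle identities from Proposition~\ref{gamma acts}, triviality of the $\Gamma_K$-action mod $\pi$ from $G_{\gamma,\hat{A}}(\vec{a})\equiv\overrightarrow{Id}\bmod\vec{\pi}$, and $\varphi$-stability are noted in the paper just before the proposition, so the proof reduces to $q^{k}\mathbb{N}_{\hat{A}}(\vec{a})\subset\varphi^{\ast}(\mathbb{N}_{\hat{A}}(\vec{a}))$, which the paper cites from \cite[Proposition 4.6]{DO10b} and which you establish directly via the factorization $\Pi_{\hat{A},i}(\vec{a})=(Id+\hat{A}_{i})\Pi_{i}(\vec{a})$ with $Id+\hat{A}_{i}\in\mathrm{GL}_{n}(\mathcal{O}_{E}[[\pi]])$ and $\det\Pi_{i}(\vec{a})=c_{i}q^{k_{i}}$. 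The only cosmetic difference is that you unpack the cited argument rather than referring to it.
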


Let $V_{\hat{A}}(\vec{a})=E\otimes _{\mathcal{O}_{E}}\mathrm{T}_{\hat{A}}(%
\vec{\alpha}),$ where $\mathrm{T}_{\hat{A}}(\vec{\alpha})=\mathbb{T}(\mathbb{%
D}_{\hat{A}}(\vec{a}))$ and $\mathbb{D}_{\hat{A}}(\vec{a})=\mathbb{A}%
_{K,E}\otimes _{\mathbb{A}_{K,E}^{+}}\mathbb{N}_{\hat{A}}(\vec{a}).$ By
Theorem \ref{berger thm} the representations $V_{\hat{A}}(\vec{a})$ are $n$%
-dimensional crystalline $E$-representations of $G_{K}$ with Hodge-Tate
weights in $[-k;\ 0].$ They are independent of the lifting $\hat{A}$ of $A$
(see Remark \ref{oneremark...} (i) below) and instead we denote them by $%
V_{A}(\vec{a}).$ Regarding the $\func{mod}$ $p$ reductions of these
representations, we have the following.

\begin{proposition}
\label{xwris A=0}For any $\vec{a}\in \mathfrak{m}_{E}^{\left\vert \mathcal{S}%
\right\vert }\ $there exist $G_{K}$-stable $\mathcal{O}_{E}$-lattices such
that$\ \overline{V}_{A}(\vec{a})\simeq \overline{V}_{A}(\vec{0}).$

\begin{proof}
Identical to the proof of \cite[Theorem 4.7 ]{DO10b}, given that $\Pi _{\hat{%
A}}(\vec{a})\equiv \Pi _{\hat{A}}(\vec{0})\func{mod}$ $\mathfrak{m}_{E}$ and 
$G_{\gamma ,\hat{A}}(\vec{a})\equiv G_{\gamma ,\hat{A}}(\vec{0})\func{mod}$ $%
\mathfrak{m}_{E}.$
\end{proof}
\end{proposition}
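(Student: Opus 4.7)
The plan is to invoke Berger's equivalence (Theorem \ref{berger thm} (ii)), which gives a bijection between Wach modules over $\mathbb{A}_{K,E}^{+}$ that are $\mathbb{A}_{K,E}^{+}$-lattices inside $\mathbb{N}(V_{A}(\vec{a}))$ and $G_{K}$-stable $\mathcal{O}_{E}$-lattices in $V_{A}(\vec{a})$. By the construction preceding Proposition \ref{rank two wach modules construction copy(1)}, the Wach modules $\mathbb{N}_{\hat{A}}(\vec{a})$ and $\mathbb{N}_{\hat{A}}(\vec{0})$ correspond, via $\mathrm{T}=\mathbb{T}(\mathbb{A}_{K,E}\otimes_{\mathbb{A}_{K,E}^{+}}\mathbb{N})$, to distinguished $G_{K}$-stable $\mathcal{O}_{E}$-lattices $\mathrm{T}_{\hat{A}}(\vec{a})$ and $\mathrm{T}_{\hat{A}}(\vec{0})$ inside $V_{A}(\vec{a})$ and $V_{A}(\vec{0})$ respectively, and these will be the lattices used to form the mod-$p$ reductions.

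The heart of the argument is the observation that for $\vec{a}\in\mathfrak{m}_{E}^{|\mathcal{S}|}$, specialization at $\vec{a}$ and at $\vec{0}$ agree modulo $\mathfrak{m}_{E}$ on any element of $\mathcal{O}_{E}[[\pi,\mathcal{S}]]$. Applied entrywise to the matrices built from $\hat{A}$, $\Pi(\mathcal{S})$ and $G_{\gamma}^{(k)}(\mathcal{S})$ from Lemma \ref{perturb lemma} and Proposition \ref{perturb cor}, this yields
\begin{equation*}
\Pi_{\hat{A}}(\vec{a})\equiv\Pi_{\hat{A}}(\vec{0})\ \func{mod}\ \mathfrak{m}_{E}\quad\text{and}\quad G_{\gamma,\hat{A}}(\vec{a})\equiv G_{\gamma,\hat{A}}(\vec{0})\ \func{mod}\ \mathfrak{m}_{E}
\end{equation*}
for every $\gamma\in\Gamma_{K}$. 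Consequently, the tautological map $\eta_{i}\mapsto\eta_{i}$ induces an isomorphism of $(\varphi,\Gamma_{K})$-modules over $k_{E}[[\pi]]^{|\tau|}$ between the reductions $\mathbb{N}_{\hat{A}}(\vec{a})/\mathfrak{m}_{E}\mathbb{N}_{\hat{A}}(\vec{a})$ and $\mathbb{N}_{\hat{A}}(\vec{0})/\mathfrak{m}_{E}\mathbb{N}_{\hat{A}}(\vec{0})$.

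To finish, I would apply base change $\mathbb{A}_{K,E}\otimes_{\mathbb{A}_{K,E}^{+}}(-)$ followed by Fontaine's functor $\mathbb{T}$ of Theorem \ref{fontaine|s equivalence} (ii). Both operations are compatible with reduction modulo $\mathfrak{m}_{E}$ (the tensor product commutes with quotients, and $\mathbb{T}$ descends to the residue field), so the isomorphism at the level of reduced Wach modules transports to an isomorphism $\overline{\mathrm{T}}_{\hat{A}}(\vec{a})\simeq\overline{\mathrm{T}}_{\hat{A}}(\vec{0})$ of $G_{K}$-representations over $k_{E}$, i.e.\ $\overline{V}_{A}(\vec{a})\simeq\overline{V}_{A}(\vec{0})$ with respect to the chosen lattices. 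The only mildly delicate point, handled identically in \cite[Theorem 4.7]{DO10b}, is the verification that each of the functors in the chain \emph{Wach module} $\leadsto$ \emph{\'{e}tale $(\varphi,\Gamma_{K})$-module} $\leadsto$ \emph{Galois lattice} commutes with mod-$\mathfrak{m}_{E}$ reduction; once this bookkeeping is in place, the congruence of structure matrices established above immediately delivers the claim.
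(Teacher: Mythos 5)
Your proposal is correct and follows essentially the same route as the paper: the paper's proof cites precisely the two congruences $\Pi_{\hat{A}}(\vec{a})\equiv\Pi_{\hat{A}}(\vec{0})\func{mod}\mathfrak{m}_{E}$ and $G_{\gamma,\hat{A}}(\vec{a})\equiv G_{\gamma,\hat{A}}(\vec{0})\func{mod}\mathfrak{m}_{E}$, which you derive from the observation that specialization at $\vec{a}\in\mathfrak{m}_{E}^{|\mathcal{S}|}$ and at $\vec{0}$ agree modulo $\mathfrak{m}_{E}$, and then delegates the passage from congruent Wach-module structure matrices to isomorphic lattice reductions to \cite[Theorem 4.7]{DO10b}. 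The functorial chain you spell out (reduction of Wach modules, base change, $\mathbb{T}$) is exactly what that reference supplies.
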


\begin{lemma}
\label{perturb subcol}Let $A\in M_{n}\left( p^{1+\alpha \left( k-1\right) }%
\mathcal{O}_{E}\right) ^{\mid \tau \mid }.\ $For any $\gamma \in \Gamma _{K}$
and for $\dag \in \{0,\hat{A}\},$ let%
\begin{equation}
G_{\gamma ,\dag }^{\left( k\right) }\left( \mathcal{S}\right) -\Pi _{\dag
}\left( \mathcal{S}\right) \varphi \left( G_{\gamma ,\dag }^{\left( k\right)
}\left( \mathcal{S}\right) \right) \gamma \left( \Pi _{\dag }\left( \mathcal{%
S}\right) ^{-1}\right) =:\vec{\pi}^{k}R_{\gamma ,\dag }^{\left( k\right) },\ 
\mathrm{where}\ G_{\gamma ,\hat{A}}^{\left( k\right) }\left( \mathcal{S}%
\right) :=G_{\gamma }^{\left( k\right) }\left( \mathcal{S}\right) .
\label{stellaki}
\end{equation}%
Then (i) $\Pi _{\hat{A}}(\mathcal{S})\equiv \Pi (\mathcal{S})\func{mod}\ I,\ 
$and (ii) $R_{\gamma ,\hat{A}}^{\left( k\right) }(\mathcal{S})\equiv
R_{\gamma }^{\left( k\right) }(\mathcal{S})\func{mod}\ I.$

\begin{proof}
By Lemma \ref{perturb lemma} (iv), $\hat{A}\equiv 0\func{mod}$ $p\ $and part
(i)$\ $is clear. The lemma follows immediately reducing equations (\ref%
{stellaki}) $\func{mod}$ $I.$
\end{proof}
\end{lemma}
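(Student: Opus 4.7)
The plan is to deduce both congruences directly from Lemma~\ref{perturb lemma}(iv). The hypothesis here is the strengthened one, $A\in M_{n}(p^{1+\alpha(k-1)}\mathcal{O}_E)^{|\tau|}$, precisely matching the hypothesis of part (iv) of that earlier lemma (one power of $p$ stronger than what was used in Proposition~\ref{perturb cor}). We may therefore begin from the conclusion $\hat{A}\equiv 0 \bmod p$. Since $p\cdot Id$ is one of the generators of the ideal $I$, this immediately promotes to $\hat{A}\equiv 0 \bmod I$ in $M_{n}(\mathcal{O}_E[[\pi,\mathcal{S}]])^{|\tau|}$. This single observation will drive everything.

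Part (i) then falls out by unwinding the definition: $\Pi_{\hat{A}}(\mathcal{S})=(I\vec{d}+\hat{A})\Pi(\mathcal{S})=\Pi(\mathcal{S})+\hat{A}\,\Pi(\mathcal{S})\equiv\Pi(\mathcal{S})\bmod I$. Before turning to (ii) I would also record the auxiliary congruence $\Pi_{\hat{A}}(\mathcal{S})^{-1}\equiv \Pi(\mathcal{S})^{-1}\bmod I$, obtained from $(I\vec{d}+\hat{A})^{-1}\equiv I\vec{d}\bmod I$ by geometric series, and I would note that applying $\gamma$ preserves this congruence because the generators of $I$ are the scalar matrices $p\cdot Id$ and $X_{i}\cdot Id$, which are central and $\Gamma_{K}$-fixed.

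For part (ii), by construction $G^{(k)}_{\gamma,\hat{A}}(\mathcal{S})=G^{(k)}_{\gamma}(\mathcal{S})$, so the only ingredients in the two defining identities~(\ref{stellaki}) that can differ between $\dag=\varnothing$ and $\dag=\hat{A}$ are the conjugating factors $\Pi_{\dag}$ and $\gamma(\Pi_{\dag}^{-1})$. Substituting the congruences of the previous paragraph into (\ref{stellaki}) for $\dag=\hat{A}$ and subtracting the $\dag=\varnothing$ version gives $\vec{\pi}^{k}\bigl(R^{(k)}_{\gamma,\hat{A}}-R^{(k)}_{\gamma}\bigr)\equiv 0\bmod I$; cancelling the nonzerodivisor $\vec{\pi}^{k}$ yields (ii).

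The only point I would pause to verify is the final cancellation of $\vec{\pi}^{k}$: one must check that dividing by $\vec{\pi}^{k}$ respects the congruence modulo $I$. This is true because the generators of $I$ lie in $\mathcal{O}_E[[\mathcal{S}]]$ and carry no $\pi$-dependence, so the $I$-adic and $\pi$-adic filtrations on $\mathcal{O}_E[[\pi,\mathcal{S}]]$ are compatible in the sense required. Apart from this bookkeeping, no serious obstacle appears; the entire argument is a routine substitution once the upgraded bound $\hat{A}\equiv 0\bmod p$ is in hand.
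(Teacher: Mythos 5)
Your proof is correct and follows the paper's argument: both derive $\hat{A}\equiv 0\bmod I$ from Lemma~\ref{perturb lemma}(iv) and then reduce the two equations~(\ref{stellaki}) modulo $I$. You make explicit two points the paper leaves tacit — that $\gamma$ and $\varphi$ fix $p$ and the $X_i$ so reduction modulo $I$ commutes with them, and that $\vec{\pi}^k$ is a non-zero-divisor in $M_n^{\mathcal{S}}/I$ so it can be cancelled — which is a small but worthwhile addition; the only slight imprecision is writing $\Pi_{\hat A}^{-1}\equiv\Pi^{-1}\bmod I$ (these inverses individually live in a localization at $q$), though the congruence you actually need, namely $(I\vec d+\hat A)\,\Pi\varphi(G^{(k)}_\gamma)\gamma(\Pi^{-1})\,\gamma(I\vec d+\hat A)^{-1}\equiv\Pi\varphi(G^{(k)}_\gamma)\gamma(\Pi^{-1})\bmod I$, involves only the product $\Pi\varphi(G^{(k)}_\gamma)\gamma(\Pi^{-1})\in M_n^{\mathcal{S}}$ and the factors $(I\vec d+\hat A)^{\pm 1}$, so no denominators actually arise.
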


\section{Proof of the Theorem\label{tinasepw...}}

\noindent Throughout this section we assume that $n=2.$ For the rest of the
paper we fix an $f$-tuple $P=\left( P_{1},P_{2},...,P_{f}\right) $ chosen as
in $\S \ref{dof}.$\ For any $A\in M_{2}\left( \mathcal{O}_{E}[[\pi ]]\right)
^{\mid \tau \mid }$ we define $Q_{f}^{A}:=\tprod\nolimits_{i=1}^{i=f}\left(
Id+A_{i}\right) P_{i}$\ \ and we let $Q_{f}:=Q_{f}^{0}.$ We define%
\begin{equation*}
\ m_{k}:=\left\{ 
\begin{array}{l}
\ \ 0\ \ \ \ \ \mathrm{if}\ \text{\ }k_{j}=p\ \mathrm{for\ all\ }j\ \mathrm{%
and\ Tr}\left( Q_{f}\right) \not\in \overline{%
%TCIMACRO{\U{211a} }%
%BeginExpansion
\mathbb{Q}
%EndExpansion
_{p}}, \\ 
\lfloor \frac{k-1}{p-1}\rfloor \ \mathrm{otherwise.}%
\end{array}%
\right.
\end{equation*}%
For matrices $P$ chosen as in $\S \ref{dof}$ the condition $\mathrm{Tr}%
\left( Q_{f}\right) \not\in \overline{%
%TCIMACRO{\U{211a} }%
%BeginExpansion
\mathbb{Q}
%EndExpansion
_{p}}$ in the definition of $m_{k}$ turns out to be redundant (see Lemma \ref%
{RMK} (i) below), and $m_{k}$ coincides with the integer $m$ defined in
formula (\ref{m}). Let $\Pi \left( \mathcal{S}\right) =(\Pi _{1}\left(
S_{1}\right) ,\Pi _{2}\left( S_{2}\right) ,\noindent ...,\noindent \Pi
_{f}\left( S_{f}\right) )\in M_{n}^{\mathcal{S}},$ where $\Pi _{i}$\ are
matrices of one of the following four types: 
\begin{equation*}
t_{1}\mathbf{:}\mathbf{\ }\left( 
\begin{array}{cc}
c_{i}q^{k_{i}} & 0 \\ 
S_{i}\varphi (z_{i}) & 1%
\end{array}%
\right) ,\ t_{2}\mathbf{:\ }\left( 
\begin{array}{cc}
S_{i}\varphi (z_{i}) & 1 \\ 
c_{i}q^{k_{i}} & 0%
\end{array}%
\right) ,\ t_{3}\mathbf{:}\mathbf{\ }\left( 
\begin{array}{cc}
1 & S_{i}\varphi (z_{i}) \\ 
0 & c_{i}q^{k_{i}}%
\end{array}%
\right) ,\ t_{4}\mathbf{:\ }\left( 
\begin{array}{cc}
0 & c_{i}q^{k_{i}} \\ 
1 & S_{i}\varphi (z_{i})%
\end{array}%
\right) ,
\end{equation*}%
with$\ S_{i}\in \mathcal{S\ }$and $c_{i}\in \mathcal{O}_{E}^{\times }.\ $The 
$z_{i}$\ are polynomials in $%
%TCIMACRO{\U{2124} }%
%BeginExpansion
\mathbb{Z}
%EndExpansion
_{p}[\pi ]\ $of degree $\leq k-1\ $such that $z_{i}\equiv p^{m_{k}}\func{mod}%
~\pi ,$\ suitably chosen so that there exist matrices \ $G_{\gamma }^{(k)}(%
\mathcal{S})\in M_{n}^{\mathcal{S}}$ with $G_{\gamma }^{(k)}(\mathcal{S}%
)\equiv \overrightarrow{Id}$ $\func{mod}$ $\vec{\pi}$ such that $G_{\gamma
}^{(k)}(\mathcal{S})-\Pi (\mathcal{S})\varphi (G_{\gamma }^{(k)}(\mathcal{S}%
))\gamma (\Pi (\mathcal{S})^{-1})\in \vec{\pi}^{k}M_{n}^{\mathcal{S}}.$ The
existence of such polynomials has been established in \cite[Proposition 5.9
\& Remark 5.12]{DO10b}. We let $\vec{X}=\left( X_{1},X_{2},...,X_{f}\right) $
with $\vec{X}=p^{m_{k}}\vec{S},$ and we choose $\Pi $ so that its modulo $%
\vec{\pi}$ reduction equals $P.$ In particular, the type of $\Pi _{i}$
coincides with the type of $P_{i}$ for all $i.$ If $A\in M_{2}\left( 
\mathcal{O}_{E}[[\pi ]]\right) ^{\mid \tau \mid },$ let $\hat{A}$ be a fixed
choice of a lifting of $A$ as in Lemma \ref{perturb lemma} with respect to a
fixed choice of matrices $G_{\gamma }(\mathcal{S}):=G_{\gamma }^{(k)}(%
\mathcal{S})$ as above, and let $\Pi _{\hat{A}}\left( \mathcal{S}\right)
=\left( Id+\hat{A}\right) \Pi (\mathcal{S}).$ Let $E_{ij},$ $i,j=1,2,$ be
the $2\times 2$ matrix with $\left( i,j\right) \ $entry $1$ and all other
entries $0.$

\begin{lemma}

\begin{enumerate}
\item[(i)] \label{RMK}\ \textrm{$Tr$}$\left( Q_{f}\right) \not\in \overline{%
%TCIMACRO{\U{211a} }%
%BeginExpansion
\mathbb{Q}
%EndExpansion
_{p}}\ ;$

\item[(ii)] \textrm{$Tr$}$\left( Q_{f}\right) \not\in p\overline{%
%TCIMACRO{\U{2124} }%
%BeginExpansion
\mathbb{Z}
%EndExpansion
_{p}}[X_{1},X_{2},...,X_{f}].$

\item[(iii)] For any $A\in M_{2}\left( p\mathcal{O}_{E}[[\pi ]]\right)
^{\mid \tau \mid },\ $\textrm{$Tr$}$(Q_{f}^{A})\not\in \overline{%
%TCIMACRO{\U{211a} }%
%BeginExpansion
\mathbb{Q}
%EndExpansion
_{p}}\ ;$

\item[(iv)] For any $A\in M_{2}\left( p\mathcal{O}_{E}[[\pi ]]\right) ^{\mid
\tau \mid }$ the operator ($\ref{de mas xezeis})$ is surjective.
\end{enumerate}

\begin{proof}
For part (i) recall that in the proofs of \cite[Theorems $1.5$ \& $1.7$]%
{DO10b}, the types of the coordinate matrices $P_{i}$ of $P$ have been
chosen so that \textrm{$Tr$}$\left( Q_{f}\right) \not\in \overline{%
%TCIMACRO{\U{211a} }%
%BeginExpansion
\mathbb{Q}
%EndExpansion
_{p}}.$ For part (ii), we have 
\begin{equation}
P_{i}\func{mod}\ p=\left\{ 
\begin{array}{c}
c\left( k_{i}\right) E_{11}+E_{22}+X_{i}E_{21}\ \text{if }P_{i}=t_{1}, \\ 
c\left( k_{i}\right) E_{21}+E_{12}+X_{i}E_{11}\ \text{if }P_{i}=t_{2}, \\ 
c\left( k_{i}\right) E_{22}+E_{11}+X_{i}E_{12}\ \text{if }P_{i}=t_{3}, \\ 
c\left( k_{i}\right) E_{12}+E_{21}+X_{i}E_{22}\ \text{if }P_{i}=t_{4},%
\end{array}%
\right. \ \text{where}\ c\left( k_{i}\right) =\left\{ 
\begin{array}{c}
0\ \text{if }k_{i}>0, \\ 
\\ 
1\ \text{if }k_{i}=0.%
\end{array}%
\right.  \label{C(K_i)}
\end{equation}%
The $\left( i,i\right) $ entries in $Q_{f}\func{mod}$ $p$ are sums of
distinct terms of the form $1$ and $X_{i_{1}}\cdot X_{i_{2}}\cdot \cdots
\cdot X_{i_{r_{i}}}$ for some $1\leq r_{i}\leq f.$ Hence $\mathrm{Tr}\left(
Q_{f}\right) \not\equiv 0\func{mod}$ $p\ $(if the diagonal entries of $%
\mathrm{Tr}\left( Q_{f}\right) \func{mod}$ $p$ coincide, we use that $p\neq
2 $). For part (iii), assume that \textrm{$Tr$}$(Q_{f}^{A})\in \overline{%
%TCIMACRO{\U{211a} }%
%BeginExpansion
\mathbb{Q}
%EndExpansion
_{p}}.$ Since the entries of $Q_{f}^{A}$ are in $\overline{%
%TCIMACRO{\U{2124} }%
%BeginExpansion
\mathbb{Z}
%EndExpansion
_{p}}[X_{1},X_{2},...,X_{f}]$ it follows that \textrm{$Tr$}$(Q_{f}^{A})\in 
\overline{%
%TCIMACRO{\U{2124} }%
%BeginExpansion
\mathbb{Z}
%EndExpansion
_{p}}.$ Since $Q_{f}^{A}\equiv Q_{f}\func{mod}$ $p$ it follows that \textrm{$%
Tr$}$(Q_{f}^{A})\equiv \mathrm{Tr}\left( Q_{f}\right) \func{mod}$ $p$ and
therefore that $\mathrm{Tr}\left( Q_{f}\right) \in \overline{%
%TCIMACRO{\U{2124} }%
%BeginExpansion
\mathbb{Z}
%EndExpansion
_{p}}+p\overline{%
%TCIMACRO{\U{2124} }%
%BeginExpansion
\mathbb{Z}
%EndExpansion
_{p}}[X_{1},X_{2},...,X_{f}].$ Since $\mathrm{Tr}\left( Q_{f}\right) \not\in 
\overline{%
%TCIMACRO{\U{211a} }%
%BeginExpansion
\mathbb{Q}
%EndExpansion
_{p}},$ \cite[Lemma 5.19 \& Corollary 5.17]{DO10b} imply that $Q_{f}\func{mod%
}\ I=E_{12}$ or $E_{21},$ therefore $\mathrm{Tr}\left( Q_{f}\right) \equiv 0%
\func{mod}$ $\left( p,X_{1},...,X_{f}\right) .$ Hence $\mathrm{Tr}\left(
Q_{f}\right) \in \left( \overline{%
%TCIMACRO{\U{2124} }%
%BeginExpansion
\mathbb{Z}
%EndExpansion
_{p}}+p\overline{%
%TCIMACRO{\U{2124} }%
%BeginExpansion
\mathbb{Z}
%EndExpansion
_{p}}[X_{1},X_{2},...,X_{f}]\right) \tbigcap \left( p,X_{1},...,X_{f}\right)
=p\overline{%
%TCIMACRO{\U{2124} }%
%BeginExpansion
\mathbb{Z}
%EndExpansion
_{p}}[X_{1},X_{2},...,X_{f}]$ which contradicts part (ii) of the lemma. Part
(iv) for $A=0$ follows from \cite[Corollary 5.20]{DO10b}. The general case
holds because the operators with any $A\in M_{2}\left( p\mathcal{O}_{E}[[\pi
]]\right) ^{\mid \tau \mid }$ coincide with that with $A=0.$
\end{proof}
\end{lemma}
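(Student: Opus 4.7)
My plan is to treat the four parts in order, leveraging the explicit shape of the matrices $P_i$ and reducing $(iii)$ and $(iv)$ to $(i)$--$(ii)$ and to results from \cite{DO10b}.

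For part (i), I would not reprove anything: the selection rules for the types $t_1,t_2,t_3,t_4$ assigned to the coordinates $P_i$ in \S\ref{dof} are exactly those used in the proofs of \cite[Theorems 1.5 \& 1.7]{DO10b}, and there they are shown to force $\mathrm{Tr}(Q_f) \notin \overline{\mathbb{Q}_p}$. So part (i) is just a citation.

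For part (ii), the plan is an explicit computation modulo $p$. Using $c(k_i)=0$ whenever $k_i>0$, each $P_i \bmod p$ is the sum of one elementary matrix $E_{jk}$ and a term $X_i \cdot E_{j'k'}$ determined by the type. Expanding $Q_f = \prod_{i=1}^f P_i$ modulo $p$ as a sum over the $2^f$ choices (constant part vs.\ $X_i$-part at each factor) yields a polynomial in $X_1,\ldots,X_f$ whose $(1,1)$- and $(2,2)$-entries are sums of distinct monomials of the form $X_{i_1}\cdots X_{i_r}$ or $1$. The trace modulo $p$ is then a nonzero sum of such monomials, since the only possible cancellation between the two diagonal entries is ruled out by $p\neq 2$. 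This gives $\mathrm{Tr}(Q_f) \notin p\,\overline{\mathbb{Z}_p}[X_1,\ldots,X_f]$.

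For part (iii), argue by contradiction: if $\mathrm{Tr}(Q_f^A)\in\overline{\mathbb{Q}_p}$, then since the entries of $Q_f^A$ already lie in $\overline{\mathbb{Z}_p}[X_1,\ldots,X_f]$, integrality forces $\mathrm{Tr}(Q_f^A)\in\overline{\mathbb{Z}_p}$. Because $A\equiv 0\pmod p$, we have $Q_f^A\equiv Q_f\pmod p$, hence
\[
\mathrm{Tr}(Q_f)\in\overline{\mathbb{Z}_p}+p\,\overline{\mathbb{Z}_p}[X_1,\ldots,X_f].
\]
On the other hand, part (i) together with \cite[Lemma 5.19 \& Corollary 5.17]{DO10b} identifies $Q_f\bmod I$ with one of $E_{12}$ or $E_{21}$, so $\mathrm{Tr}(Q_f)$ lies in the ideal $(p,X_1,\ldots,X_f)$. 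Intersecting the two containments yields $\mathrm{Tr}(Q_f)\in p\,\overline{\mathbb{Z}_p}[X_1,\ldots,X_f]$, contradicting part (ii).

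For part (iv), the surjectivity of the operator (\ref{de mas xezeis}) at $A=0$ is \cite[Corollary 5.20]{DO10b}. The point for general $A\in M_2(p\mathcal{O}_E[[\pi]])^{|\tau|}$ is simply that the operator only depends on $Q_f^A\bmod I$: since $I$ contains $p$ and $A\equiv 0\pmod p$, the factors $\mathrm{Id}+A_i$ reduce to $\mathrm{Id}$ in $\overline{M_n}$, so $Q_f^A\equiv Q_f\pmod I$, and the operator for $A$ coincides with the one for $A=0$. The main obstacle in this whole program is really part (ii) --- one needs to be sure that the combinatorics of the product $\prod_i P_i\bmod p$ produces a genuinely non-constant trace, uniformly across all allowed type patterns, and the use of $p\neq 2$ to prevent the two diagonal entries from cancelling each other is the subtle ingredient there.
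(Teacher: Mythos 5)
Your proposal follows the paper's proof essentially step for step: part (i) by citation of \cite[Theorems 1.5 \& 1.7]{DO10b}, part (ii) via the explicit reduction of $P_i \bmod p$ and the $p\neq 2$ observation to rule out cancellation of the diagonal entries, part (iii) by the same contradiction combining integrality, $Q_f^A\equiv Q_f \bmod p$, and $Q_f \bmod I\in\{E_{12},E_{21}\}$ from \cite[Lemma 5.19 \& Corollary 5.17]{DO10b}, and part (iv) by \cite[Corollary 5.20]{DO10b} together with the observation that the operator depends only on $Q_f^A\bmod I$. There is no substantive difference in approach.
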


\begin{proposition}
Let $A\in M_{2}\left( p^{\alpha \left( k-1\right) }\mathcal{O}_{E}[[\pi
]]\right) ^{\mid \tau \mid }\ $and let $\Pi _{\hat{A}}\left( \mathcal{S}%
\right) $ be as in the beginning of $\S \ref{tinasepw...}.$ For each $\gamma
\in \Gamma _{K}$ there exists a unique matrix $G_{\gamma }\left( \mathcal{S}%
\right) \in M_{2}^{\mathcal{S}}$ such that

\begin{enumerate}
\item[(i)] $G_{\gamma }\left( \mathcal{S}\right) \equiv \overrightarrow{Id}%
\func{mod}$ $\vec{\pi};$

\item[(ii)] $\Pi _{\hat{A}}\left( \mathcal{S}\right) \varphi \left(
G_{\gamma }\left( \mathcal{S}\right) \right) =G_{\gamma }\left( \mathcal{S}%
\right) \gamma \Pi _{\hat{A}}\left( \mathcal{S}\right) $ for all $\gamma .$
\end{enumerate}

\begin{proof}
Conditions (a) and (b) preceding Proposition \ref{perturb cor} hold by the
discussion in the beginning of $\S \ref{tinasepw...}.$ Condition (c)
preceding Proposition \ref{perturb cor} and Condition $(\acute{1})$ of
Proposition \ref{perturb cor} hold because \textrm{$Tr$}$\left( Q_{f}\right)
\not\in \overline{%
%TCIMACRO{\U{211a} }%
%BeginExpansion
\mathbb{Q}
%EndExpansion
_{p}}$ and \textrm{$Tr$}$(Q_{f}^{A})\not\in \overline{%
%TCIMACRO{\U{211a} }%
%BeginExpansion
\mathbb{Q}
%EndExpansion
_{p}}$ respectively, by Lemma \ref{RMK} (i) \& (iii). Finally, Condition (d)
preceding Proposition \ref{perturb cor} holds by Lemma \ref{RMK} (iv) with $%
A=0.$ The proposition follows by Proposition \ref{perturb cor}.
\end{proof}
\end{proposition}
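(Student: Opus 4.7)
The plan is to obtain this as a direct specialization of Proposition \ref{perturb cor} to the setting $n=2$, so the task reduces to checking that each of the hypotheses labeled (a)--(d) before that proposition, together with the alternative condition $(\acute{1})$ that replaces (1) in dimension two, are available in the present context.

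First I would verify hypotheses (a) and (b). By construction in the beginning of $\S\ref{tinasepw...}$, the polynomials $z_{i} \in \mathbb{Z}_p[\pi]$ appearing in the entries of $\Pi(\mathcal{S})$ were chosen precisely so that matrices $G_{\gamma}^{(k)}(\mathcal{S}) \in M_{2}^{\mathcal{S}}$ satisfying $G_{\gamma}^{(k)}(\mathcal{S}) \equiv \overrightarrow{Id} \bmod \vec{\pi}$ and the congruence $G_{\gamma}^{(k)}(\mathcal{S}) - \Pi(\mathcal{S})\varphi(G_{\gamma}^{(k)}(\mathcal{S}))\gamma(\Pi(\mathcal{S})^{-1}) \in \vec{\pi}^{k}M_{2}^{\mathcal{S}}$ exist; this is \cite[Proposition 5.9 \& Remark 5.12]{DO10b}.

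Next, since $n=2$, hypothesis (c) is replaced by $(\acute{1})$: we need $\mathrm{Tr}(Q_{f}^{A}) \notin \overline{\mathbb{Q}_{p}}$. This is exactly Lemma \ref{RMK} (iii), which applies because $A \in M_{2}(p^{\alpha(k-1)}\mathcal{O}_{E}[[\pi]])^{\mid \tau \mid} \subset M_{2}(p\mathcal{O}_{E}[[\pi]])^{\mid \tau \mid}$ (using $\alpha(k-1) \geq 1$ when $k \geq p$). The plain condition (c) for the undeformed matrix $\Pi(\mathcal{S})$ is likewise ensured by $\mathrm{Tr}(Q_{f}) \notin \overline{\mathbb{Q}_{p}}$, which is Lemma \ref{RMK} (i). Hypothesis (d), required when $k = k_{i}$ for all $i$, asserts the surjectivity of the operator $\overline{H} \mapsto \overline{H - Q_{f}H(p^{fk}Q_{f}^{-1})}$ on $\overline{M_{2}}$, and this is provided by Lemma \ref{RMK} (iv) applied at $A=0$.

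With all four conditions in place, Proposition \ref{perturb cor} yields exactly the desired matrices $G_{\gamma}(\mathcal{S}) := G_{\gamma,\hat{A}}(\mathcal{S})$, unique with $G_{\gamma}(\mathcal{S}) \equiv \overrightarrow{Id} \bmod \vec{\pi}$ and satisfying the cocycle/commutation relation $\Pi_{\hat{A}}(\mathcal{S})\varphi(G_{\gamma}(\mathcal{S})) = G_{\gamma}(\mathcal{S})\gamma(\Pi_{\hat{A}}(\mathcal{S}))$. There is no genuine obstacle here; the content of the proposition is entirely hidden in the four preparatory hypotheses, so the only judgement required is to ensure that Lemma \ref{RMK} covers condition (d) with $A=0$ rather than the deformed $A$ (which is legitimate because, as observed in Lemma \ref{RMK} (iv), the operators for $A \in M_{2}(p\mathcal{O}_{E}[[\pi]])^{\mid \tau \mid}$ coincide with the operator at $A=0$ modulo the ideal $I$).
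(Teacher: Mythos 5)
Your proof is correct and follows the paper's own argument essentially verbatim: verify hypotheses (a), (b) via the choice of the $z_i$ from \cite[Proposition 5.9 \& Remark 5.12]{DO10b}, replace (c) by $(\acute{1})$ using Lemma \ref{RMK} (i) and (iii), confirm (d) via Lemma \ref{RMK} (iv), and invoke Proposition \ref{perturb cor}. The only addition is the (correct and worth noting) explicit check that $\alpha(k-1)\geq 1$ when $k\geq p$, which justifies applying Lemma \ref{RMK} (iii).
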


For any $\vec{a}\in \mathfrak{m}_{E}^{\left\vert \mathcal{S}\right\vert }$
and $\dag \in \{0,\hat{A}\},\ $we equip$\ \mathbb{N}_{\dag }\left( \vec{a}%
\right) =\left( \mathcal{O}_{E}[[\pi ]]^{\mid \tau \mid }\right) \eta
_{1}\oplus \left( \mathcal{O}_{E}[[\pi ]]^{\mid \tau \mid }\right) \eta _{2}$
with the $\varphi $ and $\Gamma _{K}$-actions defined defined by $\left(
\varphi \left( \eta _{1}\right) ,\varphi \left( \eta _{2}\right) \right)
=\left( \eta _{1},\eta _{2}\right) \Pi _{\dag }(\vec{a})$ and $\left( \gamma
\eta _{1},\gamma \eta _{2}\right) =\left( \eta _{1},\eta _{2}\right)
G_{\gamma ,\dag }(\vec{a})$ respectively.

\begin{corollary}
\label{rank two wach modules construction}The module $\mathbb{N}_{\dag }(%
\vec{a})$ with the above $\varphi $ and $\Gamma _{K}$ actions is a Wach
module corresponding to some $G_{K}$-stable $\mathcal{O}_{E}$-lattice of a $%
2 $-dimensional crystalline $E$-representation $V_{\dag }(\vec{a})$ of $%
G_{K} $ with Hodge-Tate weights in $[-k;\ 0].$

\begin{proof}
Follows immediately from Proposition \ref{rank two wach modules construction
copy(1)}.
\end{proof}
\end{corollary}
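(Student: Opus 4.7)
The plan is to observe that Corollary \ref{rank two wach modules construction} is nothing but the specialization of Proposition \ref{rank two wach modules construction copy(1)} to the rank $n=2$ setting. First I would note that the hypotheses needed to invoke Proposition \ref{rank two wach modules construction copy(1)} have already been verified in the preceding proposition of this section: for $\dag = \hat{A}$ (with $A \in M_{2}(p^{\alpha(k-1)}\mathcal{O}_{E}[[\pi]])^{\mid \tau \mid}$), the matrix $\Pi_{\hat{A}}(\mathcal{S})$ and the matrices $G_{\gamma}(\mathcal{S})$ satisfying (i) and (ii) have been produced, using Lemma \ref{RMK} to check conditions (c) (Frobenius-scalar obstruction), $(\acute{1})$ (trace condition), and (d) (surjectivity of the operator $(\ref{de mas xezeis})$) that feed into Proposition \ref{perturb cor}. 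For $\dag = 0$ the relevant Wach module is the one already constructed in \cite{DO10b}, and the above verification reduces to the case $A = 0$.

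Next I would run through the Wach module axioms as they appear in Definition \ref{Wach} and in the definition preceding Theorem \ref{berger thm}. Freeness of finite rank over $\mathcal{O}_{E}[[\pi]]^{\mid \tau \mid}$ is immediate from the direct-sum construction of $\mathbb{N}_{\dag}(\vec{a})$ on the basis $\eta_{1},\eta_{2}$. The commutation $\varphi \circ \gamma = \gamma \circ \varphi$ and the cocycle relation $\gamma_{1}\gamma_{2} \cdot x = \gamma_{1}(\gamma_{2} x)$ on $\mathbb{N}_{\dag}(\vec{a})$ come from Proposition \ref{gamma acts} specialized to $n=2$. Triviality of the $\Gamma_{K}$-action modulo $\pi$ is a direct translation of $G_{\gamma,\dag}(\vec{a}) \equiv \overrightarrow{Id} \func{mod}\ \vec{\pi}$, while the inclusion $\varphi(\mathbb{N}_{\dag}(\vec{a})) \subset \mathbb{N}_{\dag}(\vec{a})$ follows from the fact that the entries of $\Pi_{\hat{A}}(\vec{a}) = (I\vec{d}+\hat{A})\Pi(\vec{a})$ lie in $\mathcal{O}_{E}[[\pi]]^{\mid \tau \mid}$.

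The one non-formal point is the condition $q^{k}\mathbb{N}_{\dag}(\vec{a}) \subset \varphi^{\ast}(\mathbb{N}_{\dag}(\vec{a}))$, i.e.\ the control on the cokernel $\mathbb{N}_{\dag}(\vec{a})/\varphi^{\ast}(\mathbb{N}_{\dag}(\vec{a}))$. This is the technical step I would expect to be the main obstacle if one had to redo it from scratch, but it is handled exactly as in \cite[Proposition 4.6]{DO10b}: one computes $\det \Pi_{\hat{A}}(\vec{a}) = \det(I\vec{d}+\hat{A}) \cdot \det \Pi(\vec{a})$, notes that $\det(I\vec{d}+\hat{A})$ is a unit in $\mathcal{O}_{E}[[\pi]]^{\mid \tau \mid}$ (a consequence of Lemma \ref{perturb lemma} (ii)), and uses that each coordinate determinant of $\Pi(\vec{a})$ equals $c_{i} q^{k_{i}}$ up to a unit, whence the elementary divisors of $\varphi^{\ast}(\mathbb{N}_{\dag}(\vec{a}))$ in $\mathbb{N}_{\dag}(\vec{a})$ divide $q^{k_{i}}$ in the $i$-th coordinate, all of which divide $q^{k}$.

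Finally, given that $\mathbb{N}_{\dag}(\vec{a})$ is a Wach module over $\mathbb{A}_{K,E}^{+}$ of rank $2$ with weights in $[-k;\ 0]$, I would invoke Theorem \ref{berger thm} (ii): it corresponds to a $G_{K}$-stable $\mathcal{O}_{E}$-lattice inside a $2$-dimensional crystalline $E$-representation $V_{\dag}(\vec{a})$ of $G_{K}$ with Hodge-Tate weights in $[-k;\ 0]$. This yields exactly the statement of the corollary.
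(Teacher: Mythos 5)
Your proposal is correct and takes essentially the same approach as the paper: the paper's proof is the single line ``Follows immediately from Proposition \ref{rank two wach modules construction copy(1)},'' and your write-up is a careful unpacking of exactly that reduction. You correctly identify the one non-formal ingredient (the $q^{k}$-divisibility of the cokernel of $\varphi^{\ast}$, handled by the determinant computation as in \cite[Proposition 4.6]{DO10b}) and the remaining formal verifications of the Wach-module axioms and the invocation of Theorem \ref{berger thm}(ii), so this is the same argument, only spelled out.
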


As in \S $\ref{newfromold},$ the representation $V_{\hat{A}}(\vec{a})$ is
independent of the lifting $\hat{A}$ and we simply write $V_{A}(\vec{a}).$

\begin{lemma}
\label{daggermodp lemma}If $s\geq k+1\ $and $B\in M_{2}\left( \mathcal{O}%
_{E}[[\mathcal{S}]]\right) ^{\mid \tau \mid }$ is such that $B\equiv
Q_{f}B\left( p^{f\left( s-1\right) }Q_{f}^{-1}\right) \func{mod}\ I,$ then $%
B\equiv 0\func{mod}\ I.$
\end{lemma}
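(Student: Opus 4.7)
The plan is to split into two cases according to whether $f(s-1) > \sum_{i} k_{i}$ or $f(s-1) = \sum_{i} k_{i}$. A direct computation of $\det P_i$ for each of the four types $t_1, t_2, t_3, t_4$ yields $\det P_i = \pm p^{k_i}$, hence $\det Q_f = \pm p^{\sum_i k_i}$. Writing $Q_f^{-1} = (\det Q_f)^{-1} \mathrm{adj}(Q_f)$ then gives
\[
p^{f(s-1)} Q_f^{-1} \;=\; \pm\, p^{f(s-1) - \sum_i k_i}\,\mathrm{adj}(Q_f),
\]
which is integral, with entries in $\mathcal{O}_E[X_1,\ldots,X_f]$, since the hypothesis $s \geq k+1$ and $k_i \leq k$ give $f(s-1) \geq fk \geq \sum_i k_i$.

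In the first case $f(s-1) > \sum_i k_i$, the factor $p^{f(s-1) - \sum_i k_i}$ lies in $p\mathcal{O}_E$, so $p^{f(s-1)}Q_f^{-1} \equiv 0 \pmod{I}$ and the hypothesis $B \equiv Q_f B(p^{f(s-1)}Q_f^{-1}) \pmod{I}$ immediately forces $B \equiv 0 \pmod{I}$.

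The substantive case is $f(s-1) = \sum_i k_i$, which forces $s = k+1$ and $k_i = k$ for every $i$. Here $p^{fk}Q_f^{-1} = \pm\,\mathrm{adj}(Q_f)$, and because $\mathrm{adj}$ is polynomial in the entries, reduction mod $I$ commutes with $\mathrm{adj}$, giving $\overline{\mathrm{adj}(Q_f)} = \mathrm{adj}(\bar Q_f)$. By Lemma~\ref{RMK}(i) together with \cite[Lemma~5.19 \& Corollary~5.17]{DO10b}, we have $\bar Q_f \in \{E_{12}, E_{21}\}$, and in either case the $2\times 2$ adjugate formula gives $\mathrm{adj}(\bar Q_f) = -\bar Q_f$. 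The hypothesis reduces (componentwise in the tuple) to $\bar B = \mp\,\bar Q_f\, \bar B\, \bar Q_f$ in $M_2(k_E)$, which I then verify by elementary matrix-unit multiplication: for instance, if $\bar Q_f = E_{12}$ and $\bar B = (b_{ij})$, then $E_{12}\bar B E_{12} = b_{21} E_{12}$, so $\bar B = \mp b_{21} E_{12}$, which forces $b_{11} = b_{21} = b_{22} = 0$ and then $b_{12} = \mp b_{21} = 0$; the case $\bar Q_f = E_{21}$ is entirely symmetric.

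The main (and only real) obstacle is the extremal case $s = k+1$ with all $k_i = k$; away from this case the assertion is immediate from $p$-divisibility of $p^{f(s-1)}Q_f^{-1}$. In the extremal case everything comes down to the explicit shape of $\bar Q_f$ recorded in \cite{DO10b}, after which the concluding calculation is an elementary check against a single matrix unit.
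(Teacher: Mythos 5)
Your proof is correct and follows the same reduction as the paper (the only nontrivial situation is $s=k+1$ with $k_i=k$ for every $i$, and in that situation $\bar Q_f\in\{E_{12},E_{21}\}$ by Lemma~\ref{RMK}(i) and the cited results of \cite{DO10b}), but it handles the key computation $p^{fk}Q_f^{-1}\bmod I$ differently and, in my view, more cleanly. The paper proves this by a separate Claim, established by induction on $f$ with a case analysis on $\bar Q_f\in\{E_{11},E_{12},E_{21},E_{22}\}$ and tracking the shape of $P_1\cdots P_{f-1}\bmod I$ at each step. You replace that entire induction with the observation that $\det Q_f$ is a unit times $p^{\sum_i k_i}$, so $p^{f(s-1)}Q_f^{-1}=(\text{unit})\cdot p^{f(s-1)-\sum_i k_i}\,\mathrm{adj}(Q_f)$, together with the fact that the adjugate is a polynomial map commuting with reduction and that $\mathrm{adj}(E_{ij})=-E_{ij}$ for $i\ne j$. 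This buys two things: the non-extremal case becomes an immediate $p$-divisibility statement rather than an appeal to triviality, and the extremal case needs no induction at all. It is also more robust, since your argument only tracks $p^{fk}Q_f^{-1}\bmod I$ up to a unit multiple of $\bar Q_f$, which suffices (the conclusion $\bar B=0$ from $\bar B=u\,E_{ij}\bar B E_{ij}$ with $i\ne j$ holds for any unit $u$), whereas the paper's Claim asserts the exact value $-\bar Q_f$ and so must be more careful about the precise constants $c_i$ built into the matrices $P_i$. One small slip: you write $\det P_i=\pm p^{k_i}$, which is true for the four displayed types but silently ignores the unit $c$ inserted into $P_0$ in the split-reducible case; as just noted this unit is harmless, but it is worth saying so explicitly rather than asserting the clean identity $p^{fk}Q_f^{-1}=\pm\mathrm{adj}(Q_f)$.
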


\begin{proof}
We may assume that $s-1=k=k_{i}\ $for all $i,$ otherwise $Q_{f}B\left(
p^{f\left( s-1\right) }Q_{f}^{-1}\right) \equiv 0\func{mod}\ I$ and the
lemma holds trivially. By Lemma \ref{RMK} (i), $\mathrm{Tr}\left(
Q_{f}\right) \not\in \overline{%
%TCIMACRO{\U{211a} }%
%BeginExpansion
\mathbb{Q}
%EndExpansion
_{p}}$ and \cite[Lemma 5.19 \& Corollary 5.17]{DO10b} (where in \cite{DO10b} 
$\overline{Q_{f}}:=Q_{f}\func{mod}\ I$) imply that $Q_{f}\func{mod}\
I=E_{12} $ or $E_{21}.$ Recall that $k\geq p.$

\begin{claimA}
If $Q_{f}\func{mod}\ I=E_{ij}$ with $i\neq j$ then $p^{fk}Q_{f}^{-1}\func{mod%
}\ I=-Q_{f}\func{mod}\ I.$ If $Q_{f}\func{mod}\ I=E_{11},$ then $%
p^{fk}Q_{f}^{-1}\func{mod}\ I=E_{22},$ and if $Q_{f}\func{mod}\ I=E_{22}$
then $p^{fk}Q_{f}^{-1}\func{mod}\ I=E_{11}.$
\end{claimA}

\begin{proof}[Proof of Claim.]
By induction on $f.$ For $f=1,$ formula $(\ref{C(K_i)})$ becomes 
\begin{equation*}
P\func{mod}\ I=\left\{ 
\begin{array}{c}
E_{22}\ \mathrm{if}\text{ }P=t_{1}, \\ 
E_{12}\ \mathrm{if}\text{\ }P=t_{2}, \\ 
E_{11}\ \mathrm{if}\text{ }P=t_{3}, \\ 
E_{21}\ \mathrm{if}\text{ }P=t_{4},%
\end{array}%
\right.
\end{equation*}%
and the claim is clear. Suppose $f\geq 2.$ Case (i). $Q_{f}\func{mod}\
I=E_{12}.$ If $P_{1}P_{2}\cdots P_{f-1}\func{mod}$ $I=E_{11}$ then $P_{f}%
\func{mod}$ $I=E_{12}.$ The matrix $P_{f}$ is of type $2$ and by the
inductive hypothesis%
\begin{equation*}
p^{kf}Q_{f}^{-1}\func{mod}\ I=\left( p^{k}P_{f}^{-1}\right) \cdot \left(
\left( p^{k}P_{f-1}^{-1}\right) \cdot \cdots \cdot \left(
p^{k}P_{1}^{-1}\right) \right) \func{mod}\ I=-E_{12}\cdot E_{22}=-E_{12}.
\end{equation*}%
If $P_{1}P_{2}\cdots P_{f-1}\func{mod}$ $I=E_{12}$ then $P_{f}\func{mod}$ $%
I=E_{22}.$ The matrix $P_{f}$ is of type $1$ and by the inductive hypothesis%
\begin{equation*}
p^{kf}Q_{f}^{-1}\func{mod}\ I=\left( p^{k}P_{f}^{-1}\right) \cdot \left(
\left( p^{k}P_{f-1}^{-1}\right) \cdot \cdots \cdot \left(
p^{k}P_{1}^{-1}\right) \right) \func{mod}\ I=E_{11}\cdot \left(
-E_{12}\right) =-E_{12}.
\end{equation*}%
The claim follows by the inductive hypothesis, arguing similarly for the
other possibilities for $Q_{f}\func{mod}\ I.$
\end{proof}

\noindent The Claim combined with the formula $B\equiv Q_{f}B\left(
p^{fk}Q_{f}^{-1}\right) \func{mod}\ I$ imply that $B\equiv -E_{ij}BE_{ij}%
\func{mod}$ $I$ with $i\neq j.$ From the latter it is immediate that $%
B\equiv 0\func{mod}$ $I.$
\end{proof}

\begin{proposition}
\label{dagger}Let $A\in M_{2}\left( p^{1+\alpha\left( k-1\right) }\mathcal{O}%
_{E}[[\pi]]\right) ^{\mid\tau\mid}$ and let $\Pi_{\hat{A}}(\mathcal{S}) $
and $G_{\gamma,\hat{A}}(\mathcal{S})$ be as in Proposition $\ref{perturb cor}%
.$ Then $\Pi_{\hat{A}}(\mathcal{S})\equiv\Pi(\mathcal{S})\func{mod}\ I $ and 
$G_{\gamma,\hat{A}}(\mathcal{S})\equiv G_{\gamma }(\mathcal{S})\func{mod}\
I. $
\end{proposition}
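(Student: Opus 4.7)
The plan is to prove the two congruences separately, exploiting the uniqueness/iteration built into Proposition \ref{perturb cor}.

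The first assertion, $\Pi_{\hat A}(\mathcal{S})\equiv\Pi(\mathcal{S})\func{mod}\ I$, is essentially immediate. The hypothesis $A\in M_2(p^{1+\alpha(k-1)}\mathcal{O}_E[[\pi]])^{\mid\tau\mid}$ puts $A$ in the regime of Lemma \ref{perturb lemma}(iv), which yields $\hat A\equiv 0\func{mod}\ p$. Consequently $\Pi_{\hat A}-\Pi=\hat A\cdot\Pi$ has entries in $p\mathcal{O}_E[[\pi,\mathcal{S}]]$ and in particular lies in $I\cdot M_2(\mathcal{O}_E[[\pi,\mathcal{S}]])^{\mid\tau\mid}$; this is the content of Lemma \ref{perturb subcol}(i), which carries over verbatim when $A$ is allowed to depend on $\pi$.

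For the second assertion, the strategy is to reduce the defining cocycle equation modulo $I$ and to invoke a $\pi$-adic uniqueness argument. By Proposition \ref{perturb cor}, the matrices $G_{\gamma,\hat A}(\mathcal{S})$ and $G_\gamma(\mathcal{S})$ are the unique elements of $I\vec{d}+\vec{\pi}M_2^{\mathcal{S}}$ satisfying respectively $\Pi_{\hat A}\varphi(G_{\gamma,\hat A})=G_{\gamma,\hat A}\gamma(\Pi_{\hat A})$ and $\Pi\varphi(G_\gamma)=G_\gamma\gamma(\Pi)$. Setting $C:=G_{\gamma,\hat A}-G_\gamma$, subtracting the two relations, and using the first claim together with Lemma \ref{perturb subcol}(ii) (which gives $R_{\gamma,\hat A}^{(k)}\equiv R_\gamma^{(k)}\func{mod}\ I$), we obtain after reduction modulo $I$ the homogeneous equation $\overline{\Pi}\,\varphi(\overline{C})=\overline{C}\,\gamma(\overline{\Pi})$ with $\overline{C}\equiv 0\func{mod}\ \vec{\pi}$. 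Writing $\overline{C}=\sum_{r\geq 1}\pi^r\overline{C_r}$ and matching coefficients of $\pi^r$ produces, as in the $\pi$-adic analysis of the proof of Lemma \ref{perturb lemma}, a recursion of the form $(1-\chi(\gamma)^r)\overline{C_r}=(\text{combination of }\overline{\Pi},\overline{R_\gamma^{(k)}}\text{ and the }\overline{C_s}\text{ for }s<r)$. Since the inhomogeneous part vanishes at $r=1$ thanks to $\overline{C}\equiv 0\func{mod}\ \vec{\pi}$, an induction on $r$ forces $\overline{C_r}=0$ for every $r\geq 1$, whence $G_{\gamma,\hat A}\equiv G_\gamma\func{mod}\ I$.

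The main technical obstacle is the inductive step just described: one must verify that the division by $1-\chi(\gamma)^r$, which can have positive $p$-adic valuation when $\chi(\gamma)\equiv 1\func{mod}\ p$, is still valid modulo $I$. This is precisely why the hypothesis demands $A\in p^{1+\alpha(k-1)}M_2(\mathcal{O}_E[[\pi]])^{\mid\tau\mid}$ rather than $p^{\alpha(k-1)}M_2$: the proof of Lemma \ref{perturb lemma} shows that the extra factor of $p$ (which propagates to $\hat A$ and therefore to the perturbations of $\Pi$ and $R_\gamma^{(k)}$) provides exactly the divisibility budget needed to absorb the denominators arising at each step of the recursion. With this divisibility in hand, the mod-$I$ formulas for the $\overline{C_r}$ are well defined and yield $\overline{C_r}=0$ at every stage, completing the proof.
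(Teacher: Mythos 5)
Your argument for the first congruence, $\Pi_{\hat A}(\mathcal{S})\equiv\Pi(\mathcal{S})\ \mathrm{mod}\ I$, is correct and is the same as the paper's: Lemma \ref{perturb lemma}(iv) gives $\hat A\equiv 0\ \mathrm{mod}\ p$, and the claim follows.

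The second congruence is where the proposal breaks down. You set $C=G_{\gamma,\hat A}-G_\gamma$, obtain the homogeneous relation $\overline{\Pi}\,\varphi(\overline{C})=\overline{C}\,\gamma(\overline{\Pi})$ modulo $I$, and then assert that expanding in powers of $\pi$ yields a recursion of the shape $(1-\chi(\gamma)^{r})\,\overline{C_r}=(\text{lower order})$. That recursion is the one that appears in the proof of Lemma \ref{perturb lemma}, because there one is solving a \emph{conjugation} equation $(I\vec d+\hat A)G_\gamma\gamma(I\vec d+\hat A)^{-1}\equiv G_\gamma$, and the factor $(1-\chi(\gamma)^r)$ comes from $\gamma(\pi^r)-\pi^r$. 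The equation governing $G_{\gamma,\dag}$ is not a conjugation equation but the $\varphi$--$\gamma$ \emph{intertwining} equation $\Pi_\dag\varphi(G_{\gamma,\dag})=G_{\gamma,\dag}\gamma(\Pi_\dag)$, and matching $\pi$-coefficients there gives a completely different linear operator. Concretely, writing $G_{\gamma,\dag}^{(s)}=G_{\gamma,\dag}^{(s-1)}+\vec\pi^{\,s-1}H_\dag^{(s)}$ as in the paper, the coefficient matrices satisfy (equations (\ref{equation4*}) and (\ref{equation55*}))
\begin{equation*}
H_{1,\dag}^{(s)}-Q_{f,\dag}\,H_{1,\dag}^{(s)}\bigl(p^{f(s-1)}Q_{f,\dag}^{-1}\bigr)=V_\dag^{(s)},
\end{equation*}
and the crux of the inductive step is the injectivity modulo $I$ of the operator $B\mapsto B-Q_f\,B\,(p^{f(s-1)}Q_f^{-1})$. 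This is exactly the content of Lemma \ref{daggermodp lemma}, whose proof requires knowing that $Q_f\ \mathrm{mod}\ I$ equals $E_{12}$ or $E_{21}$ (a consequence of $\mathrm{Tr}(Q_f)\notin\overline{\mathbb{Q}_p}$ via \cite[Lemma 5.19 \& Corollary 5.17]{DO10b}), together with the explicit computation of $p^{fk}Q_f^{-1}\ \mathrm{mod}\ I$. None of this is visible from the $(1-\chi(\gamma)^r)$ picture, and no amount of $p$-divisibility in $\hat A$ substitutes for it: once you are working modulo $I$ you have already killed $p$, so a ``divisibility budget'' cannot rescue a non-injective operator.

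Your explanation of why the hypothesis requires $p^{1+\alpha(k-1)}$ rather than $p^{\alpha(k-1)}$ is also mislocated for the same reason. The extra factor of $p$ is consumed entirely in Lemma \ref{perturb lemma}(iv), forcing $\hat A\equiv 0\ \mathrm{mod}\ p$; this is what gives the base case of the induction, namely $\Pi_{\hat A}\equiv\Pi$ and $R^{(k)}_{\gamma,\hat A}\equiv R^{(k)}_\gamma$ modulo $I$ (Lemma \ref{perturb subcol}). It is not used to ``absorb denominators'' in the subsequent $\pi$-adic recursion, which is handled purely by the structure of $Q_f\ \mathrm{mod}\ I$ (and, for $s-1>k$, by the trivial vanishing $p^{f(s-1)}Q_f^{-1}\equiv 0\ \mathrm{mod}\ I$). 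So the proposal identifies the right outline (reduce modulo $I$, induct $\pi$-adically, invoke uniqueness) but misidentifies both the linear operator controlling the recursion and the role of the strengthened hypothesis; as written, the inductive step does not go through.
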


\begin{proof}
By Lemma \ref{perturb lemma} (iv), $\hat{A}\equiv0\func{mod}\ I,$ hence $%
\Pi_{\hat{A}}(\mathcal{S})\equiv\Pi(\mathcal{S})\func{mod}\ I.\ $Fix a
topological generator $\gamma$ of $\Gamma_{K}.$ By the proofs of \cite[%
Propositions 5.9 \& 5.11]{DO10b}, there exists a matrix $G_{\gamma }^{(k)}(%
\mathcal{S})\in M_{n}^{\mathcal{S}}$ with $G_{\gamma}^{(k)}(\mathcal{S}%
)\equiv\overrightarrow{Id}\func{mod}$ $\vec{\pi}$ and a matrix $R^{\left(
k\right) }\left( \mathcal{S}\right) \in$ $M_{n}^{\mathcal{S}}$ such that%
\begin{equation}
G_{\gamma}^{(k)}(\mathcal{S})-\Pi(\mathcal{S})\cdot\varphi\left( G_{\gamma
}^{(k)}(\mathcal{S})\right) \cdot\gamma\left( \Pi(\mathcal{S})^{-1}\right) =%
\vec{\pi}^{k}R^{\left( k\right) }\left( \mathcal{S}\right) .
\label{pesmekati...}
\end{equation}
Moreover, by the proof of \cite[Lemma 4.1]{DO10b}, for all $s\geq k+1$ there
exist matrices $G_{\gamma}^{(s)}(\mathcal{S})\in M_{n}^{\mathcal{S}}$ and $%
R^{\left( s\right) }\left( \mathcal{S}\right) \in$ $M_{n}^{\mathcal{S}}$
such that $G_{\gamma}^{(s)}(\mathcal{S})\equiv G_{\gamma}^{(s-1)}(\mathcal{S}%
)\func{mod}$ $\vec{\pi}^{s-1}M_{n}^{\mathcal{S}}$ and%
\begin{equation}
G_{\gamma}^{(s)}(\mathcal{S})-\Pi(\mathcal{S})\cdot\varphi\left( G_{\gamma
}^{(s)}(\mathcal{S})\right) \cdot\gamma\left( \Pi(\mathcal{S})^{-1}\right) =%
\vec{\pi}^{s}R^{\left( s\right) }\left( \mathcal{S}\right) .  \label{sofoula}
\end{equation}
Arguing as in the proof of part (ii) of Proposition \ref{perturb cor} and
taking into account equations (\ref{pesmekati...}) \& (\ref{sofoula}) we see
that for all $s\geq k$ there exist matrices $R_{\hat{A}}^{\left( s\right)
}\left( \mathcal{S}\right) \in M_{n}^{\mathcal{S}}$ such that such that 
\begin{equation}
G_{\gamma,\hat{A}}^{(s)}(\mathcal{S})-\Pi_{\hat{A}}(\mathcal{S})\cdot
\varphi\left( G_{\gamma,\hat{A}}^{(s)}(\mathcal{S})\right) \cdot
\gamma\left( \Pi_{\hat{A}}(\mathcal{S})^{-1}\right) =\vec{\pi}^{s}R_{\hat {A}%
}^{\left( s\right) }\left( \mathcal{S}\right)  \label{einaiswsto}
\end{equation}

\noindent Combining equations (\ref{pesmekati...}), (\ref{sofoula}) and (\ref%
{einaiswsto}) for all $s\geq k$ we write%
\begin{equation}
G_{\gamma,\dag}^{(s)}(\mathcal{S})-\Pi_{\dag}(\mathcal{S})\cdot\varphi\left(
G_{\gamma,\dag}^{(s)}(\mathcal{S})\right) \cdot\gamma\left( \Pi_{\dag }(%
\mathcal{S})^{-1}\right) =\vec{\pi}^{s}R_{\dag}^{\left( s\right) }\left( 
\mathcal{S}\right) ,  \label{einai swsto!}
\end{equation}
with$\mathrm{\ }\dag\in\{0,\hat{A}\}.$ We defined $G_{\gamma,\hat{A}%
}^{\left( k\right) }\left( \mathcal{S}\right) :=G_{\gamma}^{\left( k\right)
}\left( \mathcal{S}\right) $ and by Lemma\ \ref{perturb subcol} (ii), $%
R_{\gamma,\hat{A}}^{\left( k\right) }(\mathcal{S})\equiv R_{\gamma }^{\left(
k\right) }(\mathcal{S})\func{mod}\ I.$ We will show by induction that $%
G_{\gamma,\hat{A}}^{(s)}(\mathcal{S})\equiv G_{\gamma}^{(s)}(\mathcal{S})%
\func{mod}\ I$ and $R_{\hat{A}}^{\left( s\right) }\left( \mathcal{S}\right)
\equiv R^{\left( s\right) }\left( \mathcal{S}\right) \func{mod}\ I$ for all $%
s\geq k.$ For $s\geq k+1,$ let $G_{\gamma,\dag}^{(s)}=G_{\gamma,%
\dag}^{(s-1)}+\vec{\pi}^{s-1}H_{\dag }^{(s)},$ where $H_{\dag}^{\left(
s\right) }=H_{\gamma,\dag}^{\left( s\right) }\in M_{n}(\mathcal{O}_{E}[[%
\mathcal{S}]])^{\mid\tau\mid},$ and let$\ R_{\dag}^{(s)}\left( \mathcal{S}%
\right) =\overline{R}_{\dag}^{(s)}\left( \mathcal{S}\right) +\vec{\pi}\cdot
C_{\dag}^{\left( s\right) }\ $for some matrices $\overline{R}%
_{\dag}^{(s)}\left( \mathcal{S}\right) \in M_{n}(\mathcal{O}_{E}[[\mathcal{S}%
]])^{\mid\tau\mid}$ and $C_{\dag }^{\left( s\right) }\in M_{n}^{\mathcal{S}}$%
$.$ By the inductive hypothesis, $\overline{R}_{\hat{A}}^{(s-1)}\left( 
\mathcal{S}\right) +\vec{\pi}\cdot C_{\hat{A}}^{\left( s-1\right) }\equiv%
\overline{R}^{(s-1)}\left( \mathcal{S}\right) +\vec{\pi}\cdot C^{\left(
s-1\right) }\func{mod}\ I,$ and since $\overline{R}_{\dag}^{(s-1)}\left( 
\mathcal{S}\right) \in M_{n}(\mathcal{O}_{E}[[\mathcal{S}]])^{\mid\tau\mid},$
the latter implies that $\overline{R}_{\hat{A}}^{\left( s-1\right) }\left( 
\mathcal{S}\right) \equiv\overline{R}^{\left( s-1\right) }\left( \mathcal{S}%
\right) \func{mod}\ I.$ Let $\Pi_{\dag}(\mathcal{S})=\Pi_{\dag}^{(0)}+\pi
\Pi_{\dag}^{(1)}+\pi^{2}\Pi_{\dag}^{(2)}+\cdots,$ and let $%
\Pi_{\dag}^{(0)}=\left(
P_{1,\dag},P_{2,\dag},\cdots,P_{f-1,\dag},P_{0,\dag}\right) .$ By the proof
of \cite[Lemma 4.1]{DO10b}, the matrices $H_{\dag}^{\left( s\right) }$ can
be chosen to be solutions of the equations 
\begin{equation}
H_{\dag}^{\left( s\right) }-\vec{p}^{(s-1)}\Pi_{\dag}^{(0)}(\mathcal{S}%
)\varphi\left( H_{\dag}^{\left( s\right) }\right) \left( \Pi_{\dag}^{(0)}(%
\mathcal{S})\right) ^{-1}=-\overline{R}_{\dag}^{\left( s-1\right) },\ 
\mathrm{with\ }\dag\in\{0,\hat{A}\}.  \label{equation 3*}
\end{equation}
\noindent Let $H_{\dag}^{(s)}=\left(
H_{1,\dag}^{(s)},H_{2,\dag}^{(s)},...,H_{f-1,\dag}^{(s)},H_{0,\dag}^{(s)}%
\right) $ and $-\overline {R}_{\dag}^{(s-1)}=\left( \overline{R}%
_{1,\dag}^{(s-1)},\overline{R}_{2,\dag }^{(s-1)},...,\overline{R}%
_{f-1,\dag}^{(s-1)},\overline{R}_{0,\dag}^{(s-1)}\right) .$

\noindent Equations (\ref{equation 3*}) are equivalent to the systems of
equations%
\begin{equation}
H_{i,\dag}^{\left( s\right) }-P_{i,\dag}\cdot H_{i+1,\dag}^{\left( s\right)
}\cdot\left( p^{s-1}P_{i,\dag}^{-1}\right) =\overline{R}_{i,\dag }^{\left(
s-1\right) },  \label{equation4*}
\end{equation}
for $i=1,2,...,f,$ $\dag\in\{0,\hat{A}\},$ \noindent and with indices viewed 
$\func{mod}$ $f.$ \noindent\noindent These imply that%
\begin{align}
H_{1,\dag}^{(s)}-Q_{f,\dag}H_{1,\dag}^{(s)}(p^{f(s-1)}Q_{f,\dag}^{-1})=%
\overline{R}_{1,\dag}^{(s-1)}+Q_{1,\dag}\overline{R}_{2,\dag}^{\left(
s-1\right) }(p^{(s-1)}Q_{1,\dag}^{-1})+Q_{2,\dag}\overline{R}_{3,\dag
}^{\left( s-1\right) }(p^{2(s-1)}Q_{2,\dag}^{-1}) &  \notag \\
+\cdots+Q_{f-1,\dag}\overline{R}_{0,\dag}^{(s-1)}(p^{(s-1)(f-1)}Q_{f-1,\dag
}^{-1}), &  \label{equation55*}
\end{align}
where$\ Q_{i,\dag}=P_{1,\dag}\cdots P_{i,\dag}\ $for$\mathrm{\ }$all$\
i=1,2,...,f.$ The matrices $H_{i,\dag}^{(s)},$ $i=2,3,...,f~$are uniquely
determined by the matrix $H_{1,\dag}^{(s)}.$ Let $V_{\dag}^{\left( s\right)
}=\overline{R}_{1,\dag}^{(s-1)}+Q_{1,\dag}\overline{R}_{2,\dag
}^{(s-1)}(p^{(s-1)}Q_{1,\dag}^{-1})+Q_{2,\dag}\overline{R}%
_{3,\dag}^{(s-1)}(p^{2(s-1)}Q_{2,\dag}^{-1})+\cdots+Q_{f-1,\dag}\overline{R}%
_{0,\dag }^{(s-1)}(p^{(s-1)(f-1)}Q_{f-1,\dag}^{-1}).$\ Since $\overline{R}%
_{i}^{(s-1)}\equiv\overline{R}_{i,\hat{A}}^{(s-1)}\func{mod}\ I,$ and since $%
Q_{i}\equiv Q_{i,\hat{A}}\func{mod}\ I$ and $p^{i(s-1)}Q_{i,\hat{A}%
}^{-1}\equiv p^{i(s-1)}Q_{i}^{-1}\func{mod}\ I$ for all $i,\ $it follows
that $V^{\left( s\right) }\equiv V_{\hat{A}}^{\left( s\right) }\func{mod}\
I. $ Since $Q_{1,\hat{A}}\equiv Q_{1}\func{mod}\ I,$ the latter and
equations (\ref{equation55*}) imply that $H_{1,\hat{A}%
}^{(s)}-H_{1}^{(s)}=Q_{f}\left( H_{1,\hat{A}}^{(s)}-H_{1}^{(s)}\right)
\left( p^{f(s-1)}Q_{f}^{-1}\right) $

\noindent$\func{mod}\ I,$ and Lemma \ref{daggermodp lemma} applied for $%
B=H_{1,\hat{A}}^{(s)}-H_{1}^{(s)}$ implies that $H_{1,\hat{A}}^{(s)}\equiv
H_{1}^{(s)}\func{mod}\ I.$ Since $P_{i,\hat{A}}\equiv P_{i}\func{mod}\ I$
and $R_{i,\hat{A}}^{\left( s-1\right) }\equiv R_{i}^{\left( s-1\right) }%
\func{mod}\ I,$ equations (\ref{equation4*}) imply that $H_{\hat{A}%
}^{(s)}\equiv H^{(s)}\func{mod}\ I.$ Since $G_{\gamma,\dag}^{(s)}=G_{\gamma,%
\dag}^{(s-1)}+\vec{\pi}^{s-1}H_{\dag}^{(s)},$ the inductive hypothesis
implies that $G_{\gamma,\hat{A}}^{(s)}(\mathcal{S})\equiv G_{\gamma}^{(s)}(%
\mathcal{S})\func{mod}\ I.$ Formula (\ref{einai swsto!}) yields $\vec{\pi}%
^{s}\cdot\gamma\Pi_{\hat{A}}\left( \mathcal{S}\right) \cdot$ $R_{\hat{A}%
}^{\left( s\right) }\equiv\vec{\pi}^{s}\cdot\gamma\Pi\left( \mathcal{S}%
\right) \cdot$ $R^{\left( s\right) }\func{mod}\ I.$ Since the coordinate
matrices of both the matrices $\gamma\Pi_{\hat{A}}\left( \mathcal{S}\right)
\ $and $\gamma\Pi\left( \mathcal{S}\right) $ coincide $\func{mod}\ I$ and
have nonzero determinants $\func{mod}\ I,$ it follows that $R_{\hat{A}%
}^{\left( s\right) }\equiv R^{\left( s\right) }\func{mod}\ I,$ and this
finishes the induction. We have shown that $G_{\gamma,\hat{A}}^{(s)}(%
\mathcal{S})\equiv G_{\gamma}^{(s)}(\mathcal{S})\func{mod}\ I$ for any $%
s\geq k.$ Since $G_{\gamma,\dag}(\mathcal{S})=\underset{s\rightarrow\infty}{%
\lim}G_{\gamma,\dag}^{\left( s\right) }(\mathcal{S})$ it follows that $%
G_{\gamma,\hat{A}}(\mathcal{S})\equiv G_{\gamma}(\mathcal{S})\func{mod}\ I,$
and this finishes the proof.
\end{proof}

\begin{corollary}
\label{meA=0}Let $A\in M_{2}\left( p^{1+\alpha \left( k-1\right) }\mathcal{O}%
_{E}[[\pi ]]\right) ^{\mid \tau \mid }.$ Then for any $\vec{a}\in \mathfrak{m%
}_{E}^{f}$ there exist $G_{K_{f}}$-stable $\mathcal{O}_{E}$-lattices with
respect to which $\overline{V}_{A}(\vec{a})\simeq \overline{V}_{0}(\vec{0}).$
\end{corollary}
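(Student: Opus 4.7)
The plan is to derive Corollary \ref{meA=0} by combining two already-proven results: Proposition \ref{xwris A=0} (which handles changes in the specialization parameter $\vec{a}$) and Proposition \ref{dagger} (which handles the passage from the perturbed Wach module to the unperturbed one). First, I would invoke Proposition \ref{xwris A=0} to choose $G_{K_f}$-stable $\mathcal{O}_E$-lattices in $V_A(\vec{a})$ and $V_A(\vec{0})$ satisfying $\overline{V}_A(\vec{a}) \simeq \overline{V}_A(\vec{0})$. The problem then reduces to comparing $\overline{V}_A(\vec{0})$ with $\overline{V}_0(\vec{0})$.

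Next, I would exploit the hypothesis $A \in M_2(p^{1+\alpha(k-1)}\mathcal{O}_E[[\pi]])^{\mid\tau\mid}$ to apply Proposition \ref{dagger}, which yields the congruences $\Pi_{\hat{A}}(\mathcal{S}) \equiv \Pi(\mathcal{S}) \bmod I$ and $G_{\gamma,\hat{A}}(\mathcal{S}) \equiv G_{\gamma}(\mathcal{S}) \bmod I$, where $I$ is generated by $p\cdot Id$ and the $X_i\cdot Id$. Specializing at $\mathcal{S}=\vec{0}$ kills the generators $X_i\cdot Id$, so these congruences pass to congruences modulo $p$ between the matrices that define the $\varphi$- and $\Gamma_K$-actions on $\mathbb{N}_{\hat{A}}(\vec{0})$ and $\mathbb{N}_0(\vec{0})$. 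Since $p\mathcal{O}_E \subset \mathfrak{m}_E$, the Wach modules $\mathbb{N}_{\hat{A}}(\vec{0})$ and $\mathbb{N}_0(\vec{0})$ have canonically isomorphic reductions modulo $\mathfrak{m}_E$ as $(\varphi,\Gamma_K)$-modules over $k_E[[\pi]]^{\mid\tau\mid}$.

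Finally, I would transfer this isomorphism to the Galois side. By Theorem \ref{berger thm} (ii), the Wach modules $\mathbb{N}_{\hat{A}}(\vec{0})$ and $\mathbb{N}_0(\vec{0})$ correspond to specific $G_{K_f}$-stable $\mathcal{O}_E$-lattices $\mathrm{T}_A(\vec{0}) \subset V_A(\vec{0})$ and $\mathrm{T}_0(\vec{0}) \subset V_0(\vec{0})$, with étale $(\varphi,\Gamma_K)$-modules $\mathbb{D}(\mathrm{T}_{\dagger}) = \mathbb{A}_{K,E} \otimes_{\mathbb{A}_{K,E}^{+}} \mathbb{N}_{\dagger}(\vec{0})$. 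Tensoring the above congruences of Wach modules with $\mathbb{A}_{K,E}$ and reducing modulo $\mathfrak{m}_E$, I obtain an isomorphism of étale $(\varphi,\Gamma_K)$-modules over $\mathbb{A}_{K,E}/\mathfrak{m}_E$, which by the functoriality of Fontaine's equivalence (Theorem \ref{fontaine|s equivalence} (ii)) yields $\overline{V}_A(\vec{0}) \simeq \overline{V}_0(\vec{0})$ as $k_E[G_{K_f}]$-modules. Chaining the two isomorphisms produces $\overline{V}_A(\vec{a}) \simeq \overline{V}_A(\vec{0}) \simeq \overline{V}_0(\vec{0})$.

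I do not anticipate a genuine obstacle: all of the deep technical work (the perturbation estimates controlling the $p$-adic valuations in Lemma \ref{perturb lemma}, the existence and uniqueness of the $G_{\gamma,\hat{A}}$ in Proposition \ref{perturb cor}, and the inductive analysis modulo $I$ in Proposition \ref{dagger}) has already been carried out. The only point requiring a moment of care is the passage between congruence modulo $I$ (as matrices with coefficients in $\mathcal{O}_E[[\mathcal{S}]]$) and congruence modulo $\mathfrak{m}_E$ after specializing $\mathcal{S}=\vec{0}$; this is straightforward since the two generating types of $I$ are handled respectively by the specialization and by $p\in\mathfrak{m}_E$.
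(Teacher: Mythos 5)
Your proposal is correct and uses the same essential ingredients as the paper --- Proposition \ref{dagger} together with the lattice-transfer argument of \cite[Theorem 4.7]{DO10b} --- but factors the comparison into two stages where the paper does it in one. The paper applies Proposition \ref{dagger} directly at $\mathcal{S}=\vec{a}$: since $\vec{a}\in\mathfrak{m}_{E}^{f}$ and $p\in\mathfrak{m}_{E}$, the congruences modulo $I$ specialize at once to $\Pi_{\hat{A}}(\vec{a})\equiv\Pi(\vec{0})\bmod\mathfrak{m}_{E}$ and $G_{\gamma,\hat{A}}(\vec{a})\equiv G_{\gamma}(\vec{0})\bmod\mathfrak{m}_{E}$, so a single application of the transfer argument yields $\overline{V}_{A}(\vec{a})\simeq\overline{V}_{0}(\vec{0})$. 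Your route instead factors through the intermediate representation $\overline{V}_{A}(\vec{0})$, invoking the transfer argument twice (once inside Proposition \ref{xwris A=0} for the step $\vec{a}\to\vec{0}$, and once after specializing Proposition \ref{dagger} at $\mathcal{S}=\vec{0}$ for the step $A\to 0$). This is slightly more redundant --- you pass through $\mathbb{A}_{K,E}\otimes_{\mathbb{A}_{K,E}^{+}}(-)$ and Fontaine's equivalence an extra time --- but it is mathematically equivalent and correctly handled; in particular your care about the translation between congruence modulo $I$ in $\mathcal{O}_{E}[[\mathcal{S}]]$ and congruence modulo $\mathfrak{m}_{E}$ after specialization is exactly the point the paper relies on as well. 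Nothing is gained or lost by either organization.
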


\begin{proof}
Proposition \ref{dagger} implies that $\Pi _{\hat{A}}(\vec{a})\equiv \Pi (%
\vec{0})\func{mod}\ \mathfrak{m}_{E}$ and $G_{\gamma ,\hat{A}}(\vec{a}%
)\equiv G_{\gamma }(\vec{0})\func{mod}\ \mathfrak{m}_{E}$ for all $\gamma
\in \Gamma _{K}.$ The rest of the proof is identical to that of \cite[%
Theorem 4.7]{DO10b}.\noindent
\end{proof}

Parts (ii) and (iii) of Theorem A follow from Proposition \ref{xwris A=0}
and Corollary \ref{meA=0}. The following proposition proves part (i) and
finishes the proof of the theorem.

\begin{proposition}
\label{twmodptext}Let $A\in M_{2}\left( p^{\alpha \left( k-1\right) }%
\mathcal{O}_{E}[[\pi ]]\right) ^{\mid \tau \mid }$ and $\vec{\alpha}\in
\left( p^{m}\mathfrak{m}_{E}\right) ^{f}.$ We define the rank two filtered $%
\varphi $-module $\left( \mathbb{D}_{A}\left( \vec{\alpha}\right) ,\varphi
\right) $ with Frobenius endomorphism%
\begin{equation}
\left( \varphi \left( \eta _{1}\right) ,\varphi \left( \eta _{2}\right)
\right) :=\left( \eta _{1},\eta _{2}\right) P_{A}\left( \vec{\alpha}\right)
\label{fi}
\end{equation}%
and filtration%
\begin{equation*}
\ \ \ \ \ \mathrm{Fil}^{\mathrm{j}}(\mathbb{D}_{A}\left( \vec{\alpha}\right)
):=\left\{ 
\begin{array}{l}
E^{\mid \tau \mid }\eta _{1}\oplus E^{\mid \tau \mid }\eta _{2}\ \ \ \ \ \ 
\mathrm{if}\text{ }j\leq 0, \\ 
E^{\mid \tau _{I_{0}}\mid }\left( \vec{x}\eta _{1}+\vec{y}\eta _{2}\right) \
\ \ \mathrm{if}\text{ }1\leq j\leq w_{0}, \\ 
E^{\mid \tau _{I_{1}}\mid }\left( \vec{x}\eta _{1}+\vec{y}\eta _{2}\right) \
\ \ \mathrm{if}\text{ }1+w_{0}\leq j\leq w_{1}, \\ 
\ \ \ \ \ \ \ \ \ \ \ \ \ \ \ \ \ \ \ \ \cdots \cdots \\ 
E^{\mid \tau _{I_{t-1}}\mid }\left( \vec{x}\eta _{1}+\vec{y}\eta _{2}\right)
\ \mathrm{if}\text{ }1+w_{t-2}\leq j\leq w_{t-1}, \\ 
\ \ \ \ \ \ \ \ \ \ \ 0\ \ \ \ \ \ \ \ \ \ \ \ \ \mathrm{if}\text{ }j\geq
1+w_{t-1},%
\end{array}%
\right.
\end{equation*}%
with%
\begin{equation}
\ (x_{i},y_{i})=\left\{ 
\begin{array}{l}
(1,-\alpha _{i})\ \ \ \text{\ \ }\mathrm{if}\text{ }P_{i}\ \mathrm{has}\text{
}\mathrm{type}\text{ }1\ \mathrm{or}\ 2, \\ 
(-\alpha _{i},1)\ \ \ \ \ \mathrm{if}\text{ }P_{i}\ \mathrm{has}\text{ }%
\mathrm{type}\text{ }3\ \mathrm{or}\text{ }4.%
\end{array}%
\right.  \label{trallalo!}
\end{equation}%
The filtered $\varphi $-modules $\mathbb{D}_{A}\left( \vec{\alpha}\right) $
are admissible and correspond to $2$-dimensional crystalline $E$-linear
representations $V_{A}\left( \vec{\alpha}\right) $ of $G_{K_{f}}$ with
Hodge-Tate type $\mathrm{HT}\left( \tau _{i}\right) =\{0,-k_{i}\}.$
\end{proposition}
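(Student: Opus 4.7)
The plan is to realize $\mathbb{D}_{A}(\vec{\alpha})$ as the reduction modulo $\pi$ of the Wach module $\mathbb{N}_{\hat{A}}(\vec{a})$ produced by Corollary \ref{rank two wach modules construction}, for the choice $\vec{a}:=p^{-m_{k}}\vec{\alpha}\in\mathfrak{m}_{E}^{f}$ (well defined since $\vec{\alpha}\in(p^{m}\mathfrak{m}_{E})^{f}$ and $m=m_{k}$). Granted such an identification, weak admissibility and the existence of the crystalline representation come for free: by Theorem \ref{berger thm}(iii) the quotient $\mathbb{N}_{\hat{A}}(\vec{a})/\pi\mathbb{N}_{\hat{A}}(\vec{a})$ is canonically isomorphic, as a filtered $\varphi$-module over $E^{\mid\tau\mid}$, to $\mathbb{D}_{\mathrm{cris}}(V_{\hat{A}}(\vec{a}))$. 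The Hodge-Tate type can then be read off from the filtration, and setting $V_{A}(\vec{\alpha}):=V_{\hat{A}}(\vec{a})$ finishes the argument.

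Matching Frobenius should be straightforward. By Lemma \ref{perturb lemma}(i), $\hat{A}\equiv A\pmod{\vec{\pi}}$; meanwhile the identities $q\equiv p\pmod{\pi}$, $\varphi(\pi)\equiv 0\pmod{\pi}$, and $z_{i}\equiv p^{m_{k}}\pmod{\pi}$ yield $q^{k_{i}}\equiv p^{k_{i}}$ and $(S_{i}\varphi(z_{i}))\big|_{S_{i}=a_{i}}\equiv a_{i}p^{m_{k}}=\alpha_{i}\pmod{\pi}$ for each $i$. Type by type, each $\Pi_{i}$ therefore reduces modulo $\pi$ to the corresponding $P_{i}(\alpha_{i})$ (the unit $c_{i}$ carrying the unramified-twist datum in the split non-ordinary case), giving $\Pi_{\hat{A}}(\vec{a})\equiv P_{A}(\vec{\alpha})\pmod{\pi}$ as required by (\ref{fi}).

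The real work, and the main obstacle, is matching the filtration. Theorem \ref{berger thm}(iii) endows $\mathbb{N}_{\hat{A}}(\vec{a})/\pi$ with the image of $\mathrm{Fil}^{j}\mathbb{N}_{\hat{A}}(\vec{a})=\{x:\varphi(x)\in q^{j}\mathbb{N}_{\hat{A}}(\vec{a})\}$. I would verify embedding by embedding that at $\tau_{i}$ the solutions of $\varphi(x\eta_{1}+y\eta_{2})\in q\mathbb{N}_{\hat{A}}(\vec{a})$ cut out the line spanned by $(x_{i},y_{i})$ prescribed in (\ref{trallalo!}), that this line persists as $\mathrm{Fil}^{j}$ through $j=k_{i}$ (because the nontrivial image of $\varphi$ picks up exactly a factor $q^{k_{i}}$), and that $\mathrm{Fil}^{k_{i}+1}$ vanishes. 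Packaging the contributions of the various $\tau_{i}$ via $I_{s}=\{i:k_{i}>w_{s-1}\}$ would then reproduce the filtration displayed in the proposition and give $\mathrm{HT}(\tau_{i})=\{0,-k_{i}\}$. This embedding-by-embedding bookkeeping, a case analysis over the four types $t_{1},\ldots,t_{4}$ of $P_{i}$, is essentially the calculation already performed for the unperturbed family $\mathcal{F}(V)$ in \cite{DO10b}; the perturbation $(I\vec{d}+A)$ with $A\in M_{2}(p^{\alpha(k-1)}\mathcal{O}_{E})^{\mid\tau\mid}$ does not affect the leading-order behaviour modulo $q$ that determines $\mathrm{Fil}^{\bullet}$, so it should reduce cleanly to the calculation of \cite{DO10b}.
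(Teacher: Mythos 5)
Your high-level plan matches the paper's: realize $\mathbb{D}_{A}(\vec{\alpha})$ as $\mathbb{N}_{\hat{A}}(\vec{a})/\pi\mathbb{N}_{\hat{A}}(\vec{a})$ for $\vec{\alpha}=p^{m}\vec{a}$, read off the Frobenius from $\Pi_{\hat{A}}(\vec{a})\bmod\pi$, deduce weak admissibility from Theorem \ref{berger thm}(iii), and compute the filtration embedding by embedding. The Frobenius matching and the bookkeeping over the sets $I_{s}$ are handled correctly.

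The one place you wave past the actual mechanism is the filtration step, and the heuristic you invoke --- ``the perturbation $(I\vec{d}+A)$ does not affect the leading-order behaviour modulo $q$ that determines $\mathrm{Fil}^{\bullet}$'' --- is not quite the right reason, and as stated would not close the argument. The filtration on the Wach module is $\mathrm{Fil}^{j}\mathbb{N}=\{x:\varphi(x)\in q^{j}\mathbb{N}\}$, which is a statement about $q^{j}$-divisibility inside $\mathbb{N}$, not an estimate modulo $q$; showing $\Pi_{\hat{A},i}$ and $\Pi_{i}$ are close modulo $q$ does not by itself show the two divisibility conditions define the same set of pairs $(x_{i},y_{i})$. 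What actually makes the perturbation drop out is that $Id+\hat{A}_{i}(\vec{a})\in\mathrm{GL}_{2}\left(\mathcal{O}_{E}[[\pi]]\right)$ --- this uses $\hat{A}\equiv A\pmod{\vec{\pi}}$ and $\det\left(Id+A_{i}\right)\in\mathcal{O}_{E}^{\times}$, the latter a consequence of $k\geq p$ so $\alpha(k-1)\geq 1$ --- and so multiplication by it is an honest change of $\mathcal{O}_{E}[[\pi]]$-basis. Concretely, setting $\left(\zeta_{1}^{i-1},\zeta_{2}^{i-1}\right):=\left(e_{i-1}\eta_{1},e_{i-1}\eta_{2}\right)\left(Id+\hat{A}_{i}(\vec{a})\right)$, the condition
\begin{equation*}
e_{i-1}\varphi\left(\vec{x}\eta_{1}+\vec{y}\eta_{2}\right)\in q^{j}e_{i-1}\mathbb{N}_{\hat{A}}(\vec{a})
\end{equation*}
becomes $\left(\zeta_{1}^{i-1},\zeta_{2}^{i-1}\right)\Pi_{i}(a_{i})\left(\varphi(x_{i}),\varphi(y_{i})\right)^{t}\in q^{j}e_{i-1}\mathbb{N}_{\hat{A}}(\vec{a})$, and since $(\zeta_{1}^{i-1},\zeta_{2}^{i-1})$ is a basis this is \emph{identical} --- not merely close --- to the condition in the unperturbed case. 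That exact identity is what lets the case analysis of \cite[Proposition 5.22]{DO10b} apply verbatim and is what produces the formula (\ref{trallalo!}). You should make this invertibility/change-of-basis step explicit: the smallness of $A$ is used only to guarantee that $Id+A_{i}$ (and hence $Id+\hat{A}_{i}$) is a unit, not to make an approximation.
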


\begin{proof}
We compute the filtered $\varphi$-modules given rise to by the Wach modules $%
\mathbb{N}_{\hat{A}}(\vec{a}).$ The $\varphi$-action is obviously given by
formula (\ref{fi}), and it suffices to compute $\mathrm{Fil}^{\mathrm{j}%
}\left( \mathbb{N}_{\hat{A}}(\vec{a})/\pi\mathbb{N}_{\hat{A}}(\vec {a}%
)\right) .$ By Theorem \ref{berger thm}, $\vec{x}\eta_{1}+\vec{y}\eta _{2}\in%
\mathrm{Fil}^{\mathrm{j}}\left( \mathbb{N}_{\hat{A}}(\vec{a})\right) $ if
and only if there $\varphi\left( \vec{x}\eta_{1}+\vec{y}\eta_{2}\right) \in
q^{j}\mathbb{N}_{\hat{A}}(\vec{a}).$ Written in matrix form and recalling
that the $\varphi$-action on $\mathcal{O}_{E}[[\pi]]^{\mid\tau\mid}$ is
given by formula (\ref{actions1}), the latter is equivalent to the existence
of vectors $\vec{u}_{1},\ \vec{u}_{2}\in\mathcal{O}_{E}[[\pi]]^{\mid\tau%
\mid} $ such that%
\begin{equation}
\left( e_{i-1}\eta_{1},e_{i-1}\eta_{2}\right) \left( Id+\hat{A}_{i}\left( 
\vec{a}\right) \right) \Pi_{i}\left( a_{i}\right) \left( \varphi\left(
x_{i}\right) ,\varphi\left( y_{i}\right) \right) ^{t}=\left(
q^{j}e_{i-1}\eta_{1},q^{j}e_{i-1}\eta_{2}\right) \left(
u_{1}^{i-1},u_{2}^{i-1}\right) ^{t}  \label{filtrationtrick1}
\end{equation}
for all $i=0,1,...,f-1,\ $where $e_{i}\ $are the idempotents$\ $of $\mathcal{%
O}_{E}[[\pi]]^{\mid\tau\mid}.$ Let 
\begin{equation*}
\left( \zeta_{1}^{i-1},\zeta_{2}^{i-1}\right) :=\left( e_{i-1}\eta
_{1},e_{i-1}\eta_{2}\right) \left( Id+\hat{A}_{i}\left( \vec{a}\right)
\right) \ \text{and\ }\left( v_{1}^{i-1},v_{2}^{i-1}\right) ^{t}:=\left( Id+%
\hat{A}_{i}\left( \vec{a}\right) \right) ^{-1}\left(
u_{1}^{i-1},u_{2}^{i-1}\right) ^{t}.
\end{equation*}
Since $\hat{A}\equiv A\func{mod}\ \vec{\pi}\ $and $\det\left(
Id+A_{i}\right) \in\mathcal{O}_{E}^{\times}\ $it follows that $Id+\hat{A}%
_{i}\left( \vec{a}\right) \in\mathrm{GL}_{2}\left( \mathcal{O}%
_{E}[[\pi]]\right) $ for all $i.$ Therefore $\left(
\zeta_{1}^{i},\zeta_{2}^{i}\right) $ is an ordered basis of $e_{i}\mathbb{N}%
_{\hat{A}}(\vec{a})$ for all $i$ and equation (\ref{filtrationtrick1})
implies%
\begin{equation*}
\left( \zeta_{1}^{i-1},\zeta_{2}^{i-1}\right) \Pi_{i}\left( a_{i}\right)
\left( \varphi\left( x_{i}\right) ,\varphi\left( y_{i}\right) \right)
^{t}=\left( q^{j}\zeta_{1}^{i-1},q^{j}\zeta_{2}^{i-1}\right) \left(
v_{1}^{i-1},v_{2}^{i-1}\right) ^{t}.
\end{equation*}
Assume that $\Pi_{i}\left( X_{i}\right) $ is of type $2.$ Arguing as in the
proof of \cite[Proposition 5.22]{DO10b} we see that $x_{i},y_{i}\equiv0\func{%
mod}\ \pi$ if $j\geq k_{i}$ and $\pi^{j}\mid x_{i}+y_{i}a_{i}z_{i}$ for $%
1\leq j\leq k_{i}.$ Since $z_{i}\func{mod}\ \pi=p^{m}$ and $%
\alpha_{i}:=p^{m}a_{i},$ 
\begin{equation*}
\ \ \ \ \ \ e_{i}\vec{x}\eta_{1}+e_{i}\vec{y}\eta_{2}+\pi\mathbb{N}_{\hat{A}%
}(\vec{a})=\left\{ 
\begin{array}{c}
\alpha_{i}\bar{y}_{i}e_{i}\eta_{1}+\bar{y}_{i}e_{i}\eta_{2}+\pi\mathbb{N}_{%
\hat{A}}(\vec{a})\ \ \mathrm{if}\text{\ \ }1\leq\text{ }j\leq k_{i},\ \  \\ 
\ \ \ \ 0\ \ \ \ \ \ \ \ \ \ \ \ \ \ \ \ \ \ \ \ \ \ \ \ \mathrm{if}\text{\
\ }j\geq k_{i}%
\end{array}
\right.
\end{equation*}
where $\bar{y}_{i}=y_{i}\func{mod}\ \pi$ can be any element of $\mathcal{O}%
_{E}.$ Hence%
\begin{equation*}
e_{i}\mathrm{Fil}^{\mathrm{j}}(\mathbb{N}_{\hat{A}}(\vec{a})/\pi \mathbb{N}_{%
\hat{A}}(\vec{a}))=\left\{ 
\begin{array}{l}
e_{i}(\mathcal{O}_{E}^{\mid\tau\mid})\eta_{1}\tbigoplus e_{i}(\mathcal{O}%
_{E}^{\mid\tau\mid})\eta_{2}\ \text{if }j\leq0, \\ 
e_{i}(\mathcal{O}_{E}^{\mid\tau\mid})(\vec{x}\eta_{1}+\vec{y}\eta _{2})\ \ \
\ \ \ \text{if }1\leq j\leq k_{i}, \\ 
\ \ \ \ \ \ 0\ \ \ \ \ \ \ \ \ \ \ \ \ \ \ \ \ \ \ \ \ \ \ \ \text{if }%
j\geq1+k_{i},%
\end{array}
\right.
\end{equation*}
with$\ (x_{i},y_{i})=(-\alpha_{i},1).$ Computing for the other choices of $%
\Pi_{i}(a_{i}),$ we see that for all $i\in I_{0},$ $(x_{i},y_{i})$ is as in
formula (\ref{trallalo!}) and the proof follows as in \cite[Proposition 5.22]%
{DO10b}. To finish the proof, notice that by the definition of the
polynomials $z_{i}\ $appearing in the matrices $\Pi_{i},$ the sets $\{\left(
z_{1}a_{1}\func{mod}\ \pi,...,z_{0}a_{0}\func{mod}\ \pi\right) \},\ $where $%
\left( a_{1},...,a_{f}\right) \in\mathfrak{m}_{E}^{f}$ and $\left( p^{m}%
\mathfrak{m}_{E}\right) ^{f}\ $coincide.$\ $We let $\vec{\alpha}:=p^{m}\cdot%
\vec{a}$ for any vector $\vec{a}\in \mathfrak{m}_{E}^{f}$ and parametrize
our families by the vectors $\vec {\alpha}.$ Finally, since \noindent$%
\mathbb{D}_{A}\left( \vec{\alpha}\right) =\mathrm{Fil}^{\mathrm{j}}\mathbb{D}%
_{\mathrm{cris}}\left( V_{A}\left( \vec{\alpha}\right) \right) \cong
E^{\mid\tau\mid}\tbigotimes \nolimits_{\mathcal{O}_{E}{}^{\mid\tau\mid}}%
\mathrm{Fil}^{\mathrm{j}}\left( \mathbb{N}_{\hat{A}}(\vec{a})/\pi \mathbb{N}%
_{\hat{A}}(\vec{a})\right) ,\ $the filtered $\varphi$-modules $\mathbb{D}%
_{A}\left( \vec{\alpha}\right) $ are admissible because the $%
E^{\mid\tau\mid}\tbigotimes \nolimits_{\mathcal{O}_{E}{}^{\mid\tau\mid}}%
\left( \mathbb{N}_{\hat{A}}(\vec{a})/\pi\mathbb{N}_{\hat{A}}(\vec{a})\right) 
$ are admissible by Theorem \ref{berger thm}.
\end{proof}

\begin{remark}
\label{oneremark...}For fixed $\vec{a}$ the filtered $\varphi $-modules $%
\mathbb{N}_{\hat{A}}(\vec{a})/\pi \mathbb{N}_{\hat{A}}(\vec{a})$ only depend
on $A\equiv \hat{A}\func{mod}\ \pi .$ This is clear from the proof of
Proposition \ref{twmodptext} (essentially by Theorem \ref{berger thm}).
\end{remark}

\begin{acknowledgement}
The paper was written at Fields Institute during the 2012 Thematic Program
on Galois Representations. We thank the Fields Institute for its hospitality
and financial support. We also thank the anomymous referee for a very
careful reading of the paper.
\end{acknowledgement}

\noindent

\end{document}